\newtheorem{theorem}{Theorem}[section]
\newtheorem{lemma}[theorem]{Lemma}
\newtheorem*{algorithm*}{Template algorithm $\mathcal{T}$ for an event $A$}
\newtheorem{proposition}[theorem]{Proposition}
\newtheorem{corollary}[theorem]{Corollary}
\theoremstyle{remark}
\newtheorem{remark}[theorem]{Remark}
\numberwithin{equation}{section}
\newcommand\C{\mathcal{C}}
\newcommand \id{\mathds 1}
\newcommand {\R} {\mathbb{R}}
\newcommand {\E} {\mathbb{E}}
\newcommand {\N} {\mathbb{N}}
\newcommand {\Z} {\mathbb{Z}}
\renewcommand {\P} {\mathbb{P}}
\newcommand {\Vol} {\textrm{Vol}}
\newcommand {\PVol} {\widetilde{\textrm{Vol}}}
\newcommand {\dist} {\textrm{dist}}
\newcommand {\eps} {\varepsilon}
\newcommand {\cone} {\textrm{Cone}}
\begin{document}
\title[Mean-field bounds for Poisson-Boolean percolation]{Mean-field bounds for Poisson-Boolean percolation}
\author{Vivek Dewan$^1$}
\email{vivek.dewan@univ-grenoble-alpes.fr}
\address{$^1$Institut Fourier, Universit\'{e} Grenoble Alpes}
\author{Stephen Muirhead$^2$}
\email{smui@unimelb.edu.au}
\address{$^2$School of Mathematics and Statistics, University of Melbourne}
\begin{abstract}
We establish the mean-field bounds $\gamma \ge 1$, $\delta \ge 2$ and $\triangle \ge 2$ on the critical exponents of the Poisson-Boolean continuum percolation model under a moment condition on the radii; these were previously known only in the special case of fixed radii (in the case of $\gamma$), or not at all (in the case of $\delta$ and $\triangle$). We deduce these as consequences of the mean-field bound $\beta \le 1$, recently established under the same moment condition [8], using a relative entropy method introduced by the authors in previous work [7].
\end{abstract}
\date{\today}
\thanks{}
\keywords{Continuum percolation, Poisson-Boolean model, critical exponents, mean-field bounds}
\subjclass[2010]{60G60 (primary); 60F99 (secondary)} 

\maketitle

\section{Introduction}
The behaviour of critical statistical physics models is believed to be described by a set of \textit{critical exponents} which govern the scaling of macroscopic observables at, or near, criticality (see, e.g., \cite[Chapter 9]{gri99} for an introduction to critical exponents for Bernoulli percolation). In general these exponents depend on the dimension of the ambient space, but they are expected to assume a \textit{mean-field value} if the dimension exceeds the \textit{upper-critical dimension}, conjectured to be $d_c = 6$ for percolation. For many of the critical exponents, but not all, the low-dimension values are bounded by their mean-field value; these are known as \textit{mean-field bounds}.

\smallskip 
In this paper we consider mean-field bounds for the \textit{Poisson-Boolean model}, a continuum percolation model introduced in \cite{gil61, hal85} that is defined as the union of Euclidean balls centred at the points of a Poisson point process on $\R^d$ with \textit{intensity} $\lambda > 0$, whose radii are independently drawn from a \textit{radius distribution} $\mu$ supported on $\R_+$. Equivalently, we can define Poisson-Boolean percolation as
\begin{equation}
\label{e:pbp}
\mathcal{O}  = \bigcup_{(x, r) \in \eta} \{ x + B_r \} , 
\end{equation}
where $\eta$ is a Poisson point process on $\R^d \times \R_+$ with intensity $\lambda dx \otimes d\mu$, $dx$ is the Lebesgue measure on $\R^d$, and $B_r \subset \R^d$ denotes the Euclidean ball of radius $r$ centred at $0$. We think of the radius distribution $\mu$ as being fixed and the intensity $\lambda$ as variable, and write $\P_\lambda$ for the law of $\mathcal{O}$ with intensity $\lambda$, with corresponding expectation $\E_\lambda$. We refer to \cite{pen03,mr08} for background on this model, and see \cite{att18,drt20} for a selection of recent results.

\smallskip
We say two sets $X, Y \subset \R^d$ are \textit{connected} if there exists a path in $\mathcal{O}$ between $X$ and $Y$, and we denote this event by $\{X \longleftrightarrow Y\}$ (with the standard abuse of notation if $X$ or $Y$ are singletons). The \textit{infinite cluster density} of the model is defined as
\[ \theta(\lambda) = \P_\lambda[ 0 \longleftrightarrow \infty] = \lim_{R \to \infty}  \P_\lambda[ 0 \longleftrightarrow \partial B_R]  ,\]
and the \textit{critical parameter} of the model is 
\[ \lambda_c = \lambda_c(\mu) = \inf\{ \lambda  \ge 0 : \theta(\lambda) > 0 \}  \in [0, \infty] . \]
Although a priori one could have $\lambda_c \in \{0, \infty\}$, it is known that the integrability condition
\begin{equation}
\label{a:wk}
 \int r^d d\mu(r)<\infty
 \end{equation}
is both necessary and sufficient for $\lambda_c \in (0, \infty)$ \cite{hal85, gou08}. We assume \eqref{a:wk} for the remainder of the paper; this is without loss of generality, since if \eqref{a:wk} is violated then $\mathcal{O} = \R^d$ almost surely so the model is trivial.

 \subsection{Critical exponents}
As mentioned above we are interested in the \textit{critical exponents} of the model which we now define. We denote by $\C$ the connected component (or `cluster') of $\mathcal{O}$ that contains the origin (setting $\C = \emptyset$ if $0 \notin \mathcal{O}$), with $\Vol(\C)$ its volume. It is believed that both the density $\theta(\lambda)$, $\lambda > \lambda_c$, and the \textit{susceptibility} $\chi(\lambda)   =  \E_\lambda[  \Vol( \C ) ]$, $\lambda < \lambda_c$, have power law behaviour as, respectively, $\lambda \downarrow \lambda_c$ and $\lambda \uparrow \lambda_c$, and also that the cluster volume $\Vol(\C)$ has a power law tail at $\lambda = \lambda_c$. Hence it is natural to define critical exponents $\beta$, $\gamma$ and $\delta$ via
\[  \theta(\lambda) = (\lambda-\lambda_c)^{\beta + o(1) }  ,  \quad  \lambda \downarrow \lambda_c  ,   \]
\[ \chi(\lambda) = (\lambda_c - \lambda)^{-\gamma + o(1) } ,  \quad  \lambda \uparrow \lambda_c  , \] 
and
\[ \P_{\lambda_c} [ \Vol( \C ) > y ]  = y^{-1/\delta + o(1)} , \quad  y \to \infty ,   \] 
whenever these exponents exist. It is further expected that, for each $k \in \mathbb{N}$,
\[  \E_\lambda[  \Vol( \C )^k ] = (\lambda_c - \lambda)^{-\gamma - \triangle (k-1)+ o(1) }   ,  \quad  \lambda \uparrow \lambda_c ,   \]
and we define the `gap exponent' $\triangle$ whenever it exists.

\smallskip
By analogy with Bernoulli percolation, it is natural to expect that these critical exponents exist and satisfy the mean-field bounds
\begin{equation}
\label{e:mfb}
 \beta \le 1 \ , \quad \gamma \ge 1 \ , \quad \delta \ge 2 \quad \text{and} \quad \triangle \ge 2  , 
 \end{equation}
and that these bounds are saturated above the upper-critical dimension $d \ge d_c = 6$. In the case of Bernoulli percolation the bounds \eqref{e:mfb} are classical \cite{cc87,an84,ab87,dn85}, and the fact that they are saturated in sufficiently high dimension has also been established \cite{an84, hs90, fh17}.

\smallskip
For Poisson-Boolean percolation much less is known about the critical exponents. It was recently shown \cite{drt20} that under the stronger moment condition
\begin{equation}
\label{a:str}
 \int r^{5d-3} d\mu(r)<\infty ,
 \end{equation}
 the mean-field density lower bound 
 \begin{equation}
 \label{e:mfbbeta}
 \theta(\lambda) \ge c(\lambda - \lambda_c ) 
 \end{equation}
 holds for some $c = c(\mu) > 0$ and all $\lambda > \lambda_c$ sufficiently close to $\lambda_c$. This implies $\beta \le 1$ if this exponent exists, and extends previous results that established \eqref{e:mfbbeta} in the case of bounded radii \cite{lpz17,zie18}. The mean-field bounds on $\gamma$, $\delta$ and $\triangle$ have not yet been established at this level of generality. Indeed to our knowledge only the inequality $\gamma \ge 1$ is known, and only for fixed radius \cite{hhlm19} (note however that \cite{hhlm19} used a slightly different definition of $\gamma$; see the discussion in Section \ref{s:vol} below). In the fixed radius case it is further known that $\gamma = 1$ in sufficiently high dimension~\cite{hhlm19}.
 
 \subsection{Main results}
Our main result establishes the mean-field bounds \eqref{e:mfb} under the assumption~\eqref{a:str}:

\begin{theorem}[Mean-field bounds]
\label{t:mfb}
Assume \eqref{a:str} and suppose that the exponents $\gamma$, $\delta$ and $\triangle$ exist. Then
\[ \gamma \ge 1 \ , \quad \delta \ge 2 \quad \text{and} \quad \triangle \ge 2 . \]
\end{theorem}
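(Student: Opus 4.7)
The plan is to derive each of the three mean-field bounds from the density estimate $\theta(\lambda) \ge c(\lambda - \lambda_c)$ via the entropy method of [7], whose central device (the template algorithm $\mathcal{T}$ named in the preamble) is a scheme for transporting the probability of an event between two Poisson-Boolean intensities. The basic input is that on a region of volume $V$, the relative entropy between the restrictions of $\P_\lambda$ and $\P_{\lambda'}$ is bounded by $C (\lambda - \lambda')^2 V$; combined with Gibbs' variational inequality, this yields, for any event $A$ measurable on that region,
\[
\P_{\lambda'}[A] \ge \P_\lambda[A] \exp\!\Big( - H/\P_\lambda[A] - O(1) \Big) .
\]
In each case I would pick $A$ so that $\P_\lambda[A]$ can be lower bounded using $\theta$ on the supercritical side, transport to the critical or subcritical side, and then optimize the free parameters $(\lambda - \lambda', V)$.

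For $\delta \ge 2$, I would take $A = \{\Vol(\C \cap B_R) \ge y\}$: under $\P_{\lambda_c + t}$, provided $t R^d \gtrsim y$, the infinite cluster typically contributes a volume $\ge y$ to $B_R$, so $\P_{\lambda_c + t}[A] \gtrsim \theta(\lambda_c + t) \ge c t$. Applying $\mathcal{T}$ with $\lambda' = \lambda_c$ and the region of volume $V \asymp R^d$, and then optimizing $(t, R)$, should produce $\P_{\lambda_c}[\Vol(\C) \ge y] \ge y^{-1/2 + o(1)}$. For $\gamma \ge 1$, I would take $A = \{0 \longleftrightarrow x\}$, use FKG to get $\tau_{\lambda_c + t}(x) \ge \theta(\lambda_c + t)^2 \ge c^2 t^2$, transport to $\lambda' = \lambda_c - s$, and then integrate over $x$ in a suitable window to recover a lower bound on $\chi(\lambda')$. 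For $\triangle \ge 2$, I would apply $\mathcal{T}$ to $k$-point connection events $\{0 \longleftrightarrow x_1, \ldots, x_k\}$ to extract $\E_{\lambda_c - s}[\Vol(\C)^k]$, or alternatively combine the cases $\gamma \ge 1$ and $\delta \ge 2$ via Cauchy--Schwarz-type interpolation between moments and tails.

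The hard part is that the transport bound above is \emph{exponential} in $H/\P_\lambda[A]$, so without care it converts a polynomial rate into an exponentially small one as soon as this ratio fails to be $O(1)$; a naive balancing only recovers $\delta \ge 1$ (or worse). The role of the template $\mathcal{T}$ is precisely to avoid this loss: it must restrict the event $A$ and its supporting region to the smallest size compatible with still capturing the desired rate, so that the entropic cost $(\lambda - \lambda')^2 V$ can be matched against $\P_\lambda[A]$ rather than swamping it. Carrying out this balance sharply at the mean-field exponent is the delicate step; once done, the three target bounds on $\gamma$, $\delta$, and $\triangle$ reduce to a common implementation of $\mathcal{T}$ for the relevant connection events.
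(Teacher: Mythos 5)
Your plan captures the paper's philosophy (transport probabilities between intensities by bounding a relative entropy, with the supercritical input $\theta(\lambda)\ge c(\lambda-\lambda_c)$ supplying the lower bound on the supercritical side), but it is missing the one idea that makes the method close, and the concrete choices you do commit to would fail quantitatively. In the paper the entropic cost is \emph{not} $(\lambda-\lambda')^2 V$ for a deterministic region supporting the event: it is $(\lambda-\lambda')^2\,\E[\PVol(\mathcal{W}_{\mathcal{T}})]$, where $\mathcal{W}_{\mathcal{T}}$ is the region revealed \emph{adaptively} by the cluster-exploration algorithm (Proposition \ref{p:entropic}, via the stopping-time Lemma \ref{l:stl}), and the whole point is that this expected revealed volume is itself a cluster observable: it is bounded by $c\,\chi(\lambda)$ for $\{0\leftrightarrow\partial B_R\}$, by $c\int_0^y\P[\Vol(\C)\ge u]\,du$ for $\{\Vol(\C)\ge y\}$, and by $c\rho^{-1}M(\rho)$ for $\{0\leftrightarrow\mathcal{G}\}$ (Proposition \ref{p:vc3}). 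This makes the transport inequality self-referential and is exactly what prevents the entropic cost from swamping $\P_\lambda[A]$. With your choice for $\delta$ --- the event $\{\Vol(\C\cap B_R)\ge y\}$ on a fixed window with $V\asymp R^d$ and $tR^d\gtrsim y$ --- the ratio $H/\P_{\lambda_c+t}[A]\gtrsim t^2R^d/t=tR^d\gtrsim y$, so the Gibbs-type bound yields only $\P_{\lambda_c}[\cdot]\gtrsim t\,e^{-cy}$, and the Pinsker-type bound fails for the same reason ($t\sqrt{\P\cdot R^d}\gg\P$ once $tR^d\gg1$); no optimization over $(t,R)$ repairs this. You acknowledge that ``carrying out this balance sharply'' is the delicate step and defer it to unspecified properties of $\mathcal{T}$, but that step \emph{is} the proof; as written the proposal does not contain it.

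Two further discrepancies with what the method actually delivers. First, the paper does not prove the pointwise bound $\P_{\lambda_c}[\Vol(\C)\ge y]\ge y^{-1/2+o(1)}$ you announce; it proves the self-referential inequality \eqref{e:t1}, $\P_{\lambda_c}[\Vol(\C)\ge y]\int_0^y\P_{\lambda_c}[\Vol(\C)\ge u]\,du\ge c$ (obtained via the additive Pinsker bound, the continuity statement of Corollary \ref{c:vc}, and the ``doubling intensity'' $\lambda(y)$ defined by $\P_{\lambda(y)}[\Vol(\C)\ge y]=2\P_{\lambda_c}[\Vol(\C)\ge y]$ together with $\P_{\lambda(y)}[\Vol(\C)\ge y]\ge\theta(\lambda(y))$), and this yields $\delta\ge2$ only under the assumed existence of the exponent --- it does not imply your stronger pointwise tail bound. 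Second, your route to $\triangle\ge2$ is not viable as stated: log-convexity of moments (Cauchy--Schwarz) only shows the gaps $\log\E_\lambda[\Vol(\C)^{k}]-\log\E_\lambda[\Vol(\C)^{k-1}]$ are non-decreasing in $k$ and gives no lower bound on them, and $k$-point connection events are not what the paper uses. What is actually needed is a lower bound on the \emph{subcritical} tail at volume scales up to $(\lambda_c-\lambda)^{-2}$, which the paper extracts from the exponential form of the entropic bound, namely \eqref{e:t2}, giving $\P_\lambda[\Vol(\C)\ge y]\gtrsim y^{-1/\delta-\eps}\exp(-c(\lambda_c-\lambda)^2y^{1+2\eps})$; integrating this lower-bounds every moment $\E_\lambda[\Vol(\C)^k]$ by $(\lambda_c-\lambda)^{-2(k-1/\delta)+o(1)}$, and comparison with $(\lambda_c-\lambda)^{-\gamma-\triangle(k-1)}$ for large $k$ forces $\triangle\ge2$. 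Neither this subcritical tail estimate nor any substitute for it appears in your proposal.
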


The mean-field bounds stated above are conditional in the sense that they assume the existence of the critical exponents $\gamma, \delta$ and $\triangle$. We next present unconditional versions of these bounds, which also demonstrate that Theorem \ref{t:mfb} is a consequence of the mean-field bound \eqref{e:mfbbeta} in full generality. In the following results we do not assume \eqref{a:str} (this condition is relevant to us only as a sufficient condition to ensure \eqref{e:mfbbeta}).

\begin{theorem}[Bounds on the susceptibility]
\label{t:s}
For every $\lambda_0 \in (0, \lambda_c)$ there exists $c = c(\mu, \lambda_0)$ such that, for all $\lambda \in (\lambda_0, \lambda_c)$,
\begin{equation}
\label{e:s1}
 \chi(\lambda) \ge c (\lambda_c - \lambda)^{-2} \theta(2 \lambda_c - \lambda)  
 \end{equation}
and
\begin{equation}
\label{e:s2}
 \chi(\lambda) \ge c (\lambda_c - \lambda)^{-2} \P_{\lambda_c}[ \Vol(\C) \ge (\lambda_c-\lambda)^{-2} ]  .
 \end{equation}
\end{theorem}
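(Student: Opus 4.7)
The plan is to derive both inequalities from a common template: a Markov bound at the scale $t = (\lambda_c - \lambda)^{-2}$, a localization of the tail event $\{\Vol(\C) \geq t\}$ to a box $B_R$ with $|B_R| \asymp t$, and a transfer of probabilities between intensities via the relative entropy method from~[7]. The starting step is the trivial bound $\chi(\lambda) = \E_\lambda[\Vol(\C)] \geq c' t\,\P_\lambda[\Vol(\C) \geq c' t]$ for any fixed $c'>0$, so it suffices to show $\P_\lambda[\Vol(\C) \geq c' t] \gtrsim \theta(2\lambda_c - \lambda)$ for \eqref{e:s1} and $\P_\lambda[\Vol(\C) \geq c't] \gtrsim \P_{\lambda_c}[\Vol(\C) \geq t]$ for \eqref{e:s2}. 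The choice $t = (\lambda_c - \lambda)^{-2}$ is dictated by the regime in which the Poisson relative entropy of $\P_\lambda$ with respect to $\P_{\lambda_c}$ restricted to $B_R$ is of constant order, namely $|B_R|\,(\lambda_c - \lambda)^2 \asymp 1$.

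The next step is to replace the global tail event by a local surrogate $A_R$ that is measurable with respect to the process in a slight enlargement of $B_R$ and satisfies $A_R \subseteq \{\Vol(\C) \geq c' t\}$. A natural candidate is to require that the cluster of $0$ occupies a prescribed proportion of its volume inside $B_R$; the integrability hypothesis \eqref{a:wk} lets us truncate contributions of balls centred far outside $B_R$ with large radii at negligible cost. For \eqref{e:s1}, at $\lambda' = 2\lambda_c - \lambda > \lambda_c$ the infinite cluster has density $\theta(\lambda')$, and a stationarity/ergodicity argument shows that on $\{0 \leftrightarrow \infty\}$ the volume of $\C \cap B_R$ is typically of order $\theta(\lambda')\,|B_R|$, giving $\P_{\lambda'}[A_R] \gtrsim \theta(\lambda')$ for an appropriate threshold. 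For \eqref{e:s2}, the probability of $\{\Vol(\C) \geq t\}$ under $\P_{\lambda_c}$ is dominated by clusters essentially fitting inside $B_R$ at the chosen scale, so $\P_{\lambda_c}[A_R] \gtrsim \P_{\lambda_c}[\Vol(\C) \geq t]$ up to boundary corrections again controlled by~\eqref{a:wk}.

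The last step is to invoke the relative entropy comparison from~[7], which should yield $\P_\lambda[A_R] \gtrsim \P_{\lambda'}[A_R]$ (respectively with $\lambda_c$ in place of $\lambda'$); chaining the three estimates then gives \eqref{e:s1} and \eqref{e:s2}. The main obstacle is precisely this transfer: a naive Cauchy--Schwarz change-of-measure only yields the quadratic comparison $\P_\lambda[A_R] \gtrsim \P_{\lambda'}[A_R]^2$, which would degrade \eqref{e:s1} into a bound involving $\theta^2$ rather than $\theta$ and hence would not suffice to deduce $\gamma \geq 1$ from $\beta \leq 1$. Extracting the linear comparison needs to exploit the monotonicity of $A_R$ together with a controlled truncation of the log-Radon--Nikodym derivative, so that the $O(1)$ entropy budget multiplies, rather than squares, the reference probability; this refinement is the role of the dedicated entropy inequality established in~[7].
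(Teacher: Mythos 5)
There is a genuine gap, and it sits exactly where you placed your hope: the transfer step. You localize to a deterministic box with $|B_R|\asymp t=(\lambda_c-\lambda)^{-2}$ precisely so that the relative entropy between the two Poisson laws restricted to $B_R$ is of constant order, and then ask for a \emph{linear} comparison $\P_\lambda[A_R]\gtrsim\P_{\lambda'}[A_R]$ for a monotone local event. No such "dedicated entropy inequality" exists in [7] or in this paper, and none can: the inequalities actually available (Lemma \ref{l:bounds}: $|P[A]-Q[A]|\le\sqrt{2\max(P[A],Q[A])D_{KL}(P\|Q)}$ and $\log P[A]-\log Q[A]\le D_{KL}(P\|Q)/P[A]+1$) are only effective when $D_{KL}\ll P[A]$, whereas here the budget is $O(1)$ while $\theta(2\lambda_c-\lambda)$ and $\P_{\lambda_c}[\Vol(\C)\ge t]$ are small near criticality. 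Monotonicity does not rescue this: for product Bernoulli measures at parameters $p$ and $p+\eps$ with $n\eps^2\asymp1$ (so $D_{KL}=O(1)$), the increasing event $\{\sum_i X_i\ge pn+K\sqrt n\}$ has probability ratio of order $e^{cK}$, unbounded as $K\to\infty$. The localization step is also shaky on its own terms: for \eqref{e:s2} critical clusters of volume $t$ need not fit in a box of volume $O(t)$ (their diameter is expected to be much larger than $t^{1/d}$, and enlarging $B_R$ destroys the $O(1)$ budget); and for \eqref{e:s1} your ergodic surrogate only yields $\Vol(\C\cap B_R)\gtrsim\theta(\lambda')|B_R|$, so after your Markov reduction you would obtain $\chi(\lambda)\gtrsim t\,\theta(2\lambda_c-\lambda)^2$ — the quadratic degradation you were trying to avoid — unless you could make the cluster occupy a \emph{constant} fraction of $B_R$ with conditional probability of order one, which is an unproven near-critical scaling statement.

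The paper's proof avoids both problems by a structurally different mechanism: there is no deterministic box and no linear probability transfer. One runs the adaptive cone-exploration algorithm $\mathcal{T}$, which reveals only hypercubes above the cluster of the origin, so that by Proposition \ref{p:vc3} (built on the volume comparisons of Section \ref{s:vc}) the entropy budget is $\lambda_2^{-1}(\lambda_2-\lambda_1)^2\,\E_\lambda[\PVol(\mathcal{W}_\mathcal{T})]\lesssim(\lambda_c-\lambda)^2\chi(\lambda)$ — a bound in terms of the unknown quantity itself, not in terms of $|B_R|$. Plugging this into the weighted Pinsker bound of Proposition \ref{p:entropic} for $A=\{0\longleftrightarrow\partial B_R\}$ with intensities $\lambda$ and $2\lambda_c-\lambda$ and letting $R\to\infty$ gives $\theta(2\lambda_c-\lambda)\le C(\lambda_c-\lambda)\sqrt{\theta(2\lambda_c-\lambda)\,\chi(\lambda)}$, which is then \emph{solved} for $\chi(\lambda)$; the linear (not quadratic) dependence on $\theta$ comes from the $\max$ factor inside the square root together with this self-referential budget, not from any refined change of measure. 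For \eqref{e:s2} one takes $A=\{\Vol(\C)\ge y\}$ with $y=(\lambda_c-\lambda)^{-2}$, compares $\lambda$ with $\lambda_c$, uses Markov only in the form $\P_\lambda[\Vol(\C)\ge y]\le(\lambda_c-\lambda)^2\chi(\lambda)$, and closes with a two-case disjunction. Your Markov-first reduction discards exactly this self-referential structure, which is what makes the argument work.
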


\begin{theorem}[Bounds on the critical cluster volume]
\label{t:t}
Suppose there exist $c_0, \beta_0 > 0$ and $\lambda_1 > \lambda_c$ such that
\[  \theta(\lambda) \ge c_0(\lambda - \lambda_c )^{\beta_0}  , \quad \text{for all } \lambda \in( \lambda_c , \lambda_1) .\]
 Then there exists a $c = c(\mu,c_0,\beta_0,\lambda_1)$ such that, for all $y \ge 1$,
\begin{equation}
\label{e:t1} 
\P_{\lambda_c}[ \Vol(\C) \ge y  ]^{2/\beta_0 - 1} \int_0^y \P_{\lambda_c}[ \Vol(\C) \ge u  ] du \ge c .  
\end{equation}
Moreover for every $\lambda_0 \in (0, \lambda_c)$ there exists a $c = c(\mu,\lambda_0)$ such that, for all $\lambda \in (\lambda_0, \lambda_c)$ and $y \ge 1$,
\begin{equation}
\label{e:t2}
 \log \frac{ \P_{\lambda}[ \Vol(\C) \ge y  ] }{ \P_{\lambda_c}[ \Vol(\C) \ge y  ] }  \ge - 1 - c  (\lambda_c - \lambda)^2 ( \P_{\lambda_c}[ \Vol(\C) \ge y  ] )^{-1}  \int_0^y \P_{\lambda_c}[ \Vol(\C) \ge u  ] du       .
 \end{equation}
\end{theorem}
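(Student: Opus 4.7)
The plan is to prove \eqref{e:t2} via the relative entropy method of [7], and to derive \eqref{e:t1} by applying the same method in the supercritical regime and combining it with the assumed lower bound on $\theta$. Throughout I write $A_y = \{\Vol(\C) \ge y\}$, $P(y) = \P_{\lambda_c}[A_y]$ and $I(y) = \int_0^y \P_{\lambda_c}[\Vol(\C) \ge u]\,du$.

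The main ingredient is a spatial stopping set $\C^* = \C^*(\omega) \subseteq \R^d$, defined by exploring the cluster of the origin ball-by-ball and halting as soon as the revealed volume first reaches $y$ (or once the cluster has been exhausted, whichever comes first). By construction $\C^*$ is determined by $\omega|_{\C^*}$, one has $A_y = \{\Vol(\C^*) = y\} \in \sigma(\omega|_{\C^*})$, and a telescoping calculation yields $\E_{\lambda_c}[\Vol(\C^*)] = I(y)$. The spatial Markov property of the Poisson point process at $\C^*$ gives a well-defined Radon--Nikodym derivative of $\P_\lambda$ with respect to $\P_{\lambda_c}$ on $\sigma(\omega|_{\C^*})$, of the form $(\lambda/\lambda_c)^{N^*} \exp\{(\lambda_c - \lambda) \Vol(\C^*)\}$, where $N^*$ is the number of points of $\omega$ centred in $\C^*$. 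Taking expectations and applying Mecke's formula, the relative entropy on $\sigma(\omega|_{\C^*})$ is bounded by $C(\lambda - \lambda_c)^2 I(y)$ for $\lambda$ near $\lambda_c$, with $C = C(\mu,\lambda_0)$ arising from the second-order Taylor expansion of the per-unit-volume entropy around $\lambda_c$. A standard Chernoff tilting argument then converts this into
\[
\log \frac{\P_\lambda[A_y]}{P(y)} \ge -1 - \frac{C(\lambda_c - \lambda)^2 I(y)}{P(y)},
\]
which is \eqref{e:t2}. Running the analogous argument for $\lambda > \lambda_c$, with the roles of $\lambda_c$ and $\lambda$ swapped in the Radon--Nikodym derivative, produces the companion upper bound
\[
\log \frac{\P_\lambda[A_y]}{P(y)} \le 1 + \frac{C(\lambda - \lambda_c)^2 I(y)}{P(y)} \qquad \text{for } \lambda > \lambda_c \text{ near } \lambda_c .
\]

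To deduce \eqref{e:t1}, combine this supercritical bound with the hypothesis $\P_\lambda[A_y] \ge \theta(\lambda) \ge c_0 (\lambda - \lambda_c)^{\beta_0}$ for $\lambda \in (\lambda_c, \lambda_1)$:
\[
\beta_0 \log(\lambda - \lambda_c) + \log c_0 - 1 - \frac{C(\lambda - \lambda_c)^2 I(y)}{P(y)} \le \log P(y) .
\]
I then optimise the left-hand side in $\lambda$: the interior maximiser is $(\lambda - \lambda_c)^2 = \beta_0 P(y)/(2C I(y))$, which lies in $(0,(\lambda_1-\lambda_c)^2)$ for $y$ above some threshold since $P(y)/I(y) \le 1/y$. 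Substituting and rearranging yields
\[
(1 - \beta_0/2) \log P(y) + (\beta_0/2) \log I(y) \ge -C' ,
\]
equivalent to $P(y)^{2/\beta_0 - 1} I(y) \ge c$, i.e., \eqref{e:t1}. The bounded-$y$ range is handled by possibly shrinking $c$.

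The hard part will be the entropy computation on the \emph{random} window $\C^*$. This requires a spatial optional-sampling property for the Poisson process at the stopping set $\C^*$ and careful control of the ``overshoot'' when the cluster exploration first crosses volume $y$ -- particularly delicate under only the moment condition \eqref{a:wk}, since a single large ball can create arbitrarily large overshoot that must be absorbed into the $\mu$-dependent constants via the radius moments. This random-window generalisation of the classical per-volume entropy calculation is exactly the technology from [7], and its application here is what makes the factor $(\lambda - \lambda_c)^2 I(y)$ arise cleanly in both \eqref{e:t2} and \eqref{e:t1}.
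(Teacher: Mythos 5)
Your plan for \eqref{e:t2} is in the same spirit as the paper (tilt the law of an explored window whose expected size is $\int_0^y \P_{\lambda_c}[\Vol(\C)\ge u]\,du$, then apply the entropy inequality $\log P[A]-\log Q[A]\le D_{KL}(P\|Q)/P[A]+1$), but the step you treat as routine is precisely where the work lies, and the difficulty you flag is not the real one. The stopped cluster $\C^*$ is \emph{not} a stopping set for the spatial process and $A_y$ is \emph{not} measurable with respect to the points centred in $\C^*$: to run the exploration one must reveal $\eta$ on the region $\{(x,r):\dist(x,\C^*)\le r\}\subset\R^d\times\R_+$, and it is the intensity measure of this cone (the paper's $\PVol(\mathcal{W}_{\mathcal{T}})$ in Proposition \ref{p:entropic}), not $\Vol(\C^*)$, that bounds the relative entropy. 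These two quantities are not comparable pointwise: a connected chain of $N$ balls of radius $\eps$ has volume of order $N\eps^d$ while its cone has Poisson--Boolean volume of order $N\eps$, so the ratio blows up as $\eps\to0$. Converting $\E[\PVol(\text{cone})]$ into $c\int_0^y\P_{\lambda_c}[\Vol(\C)\ge u]\,du$ is the content of the paper's Sections 3--4 (Proposition \ref{p:vc1} via Lee's lattice-animal theorem and a good-cube event excluding clusters made of many tiny balls, Lemma \ref{l:vc2} using \eqref{a:wk} for finiteness of cone volumes, and Proposition \ref{p:vc3}); none of this is supplied by citing [7], which treats Bernoulli/Gaussian models where the radius issue is absent. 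By contrast, the overshoot from a single large ball, which you single out as the hard part, is handled trivially by stopping one step before termination, so that $\Vol(\C_{\bar\tau})<y$.

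The derivation of \eqref{e:t1} has a genuine directional gap. The ``companion upper bound'' $\log\bigl(\P_\lambda[A_y]/P(y)\bigr)\le 1+C(\lambda-\lambda_c)^2 I(y)/P(y)$ for $\lambda>\lambda_c$, with $I(y)$ and $P(y)$ both critical quantities, does not follow from the entropy method: the inequality $\log P[A]-\log Q[A]\le D_{KL}(P\|Q)/P[A]+1$ is asymmetric, so an upper bound on $\P_\lambda[A_y]$ forces $P=\P_\lambda$, which places $\P_\lambda[A_y]$ (not $P(y)$) in the denominator and the expected revealed volume under the \emph{supercritical} measure in the numerator; the reverse choice $P=\P_{\lambda_c}$ yields a lower bound on $\P_\lambda[A_y]$, the wrong direction. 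With the only available supercritical controls ($\int_0^y\P_\lambda[\Vol(\C)\ge u]\,du\le y$ and $\P_\lambda[A_y]\ge c_0(\lambda-\lambda_c)^{\beta_0}$), your optimisation in $\lambda$ yields only $P(y)\ge c\,y^{-\beta_0/(2-\beta_0)}$, which for $\beta_0=1$ gives $\delta\ge1$ rather than $\delta\ge2$ and does not imply \eqref{e:t1}. The paper avoids this by a different idea: choose $\lambda(y)>\lambda_c$ by the doubling condition $\P_{\lambda(y)}[\Vol(\C)\ge y]=2\P_{\lambda_c}[\Vol(\C)\ge y]$ (existence from the continuity in Corollary \ref{c:vc}, and $\lambda(y)\to\lambda_c$ using $\theta(\lambda_c)=0$), and then apply the Pinsker-type \emph{first} inequality of Proposition \ref{p:entropic} with the revealed volume evaluated under $\P_{\lambda_c}$, so that all quantities are critical and no supercritical expectation is ever needed. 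Without this (or an equivalent substitute) your proof of \eqref{e:t1} does not go through.
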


To state the final set of unconditional bounds we introduce the \textit{(critical) magnetisation}
\[ M(\rho) =\E_{\lambda_c}\big[1 - e^{-\rho \Vol(\mathcal{C})} \big]   , \quad \rho > 0,\]
following the pioneering approach to Bernoulli percolation in \cite{ab87}. One can define an associated critical exponent $\delta_M$ via
\[ M(\rho) = \rho^{1/\delta_M + o(1)} , \quad  \rho \downarrow 0  ,\]
if such an exponent exists, and by general properties of the Laplace transform one can verify that $\delta = \delta_M$ if both exponents exist.

\begin{theorem}[Bounds on the magnetisation]
\label{t:m}
For every $\lambda_0 < \lambda_c$ there exists a $c = c(\mu, \lambda_0)$ such that, for all $\lambda \in (\lambda_0, \lambda_c)$,
\begin{equation}
\label{e:m1}
 \chi(\lambda)  \ge  c  (\lambda_c - \lambda)^{-2}  M( ( \lambda_c - \lambda)^2) . 
 \end{equation}
Suppose in addition there exist $c_0, \beta_0 > 0$ and $\lambda_1 > \lambda_c $ such that
\[ \theta(\lambda) \ge c_0(\lambda - \lambda_c )^{\beta_0}  , \quad \text{for all } \lambda \in( \lambda_c , \lambda_1) . \]
 Then there exists a $c = c(\mu, c_0, \beta_0, \lambda_1)$ such that, for all $\rho \in (0,1]$,
  \begin{equation}
\label{e:m2} 
M(\rho) \ge c \rho^{\beta_0/2} . 
\end{equation}
\end{theorem}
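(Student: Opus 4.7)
The plan is to base both inequalities on a pointwise entropy-type tail comparison between $\P_\lambda$ and $\P_{\lambda_c}$, of the form
\begin{equation}
\label{e:reltilt}
\P_\lambda[\Vol(\C) \ge y] \;\ge\; c_1 \, e^{-c_2 (\lambda_c - \lambda)^2 y} \, \P_{\lambda_c}[\Vol(\C) \ge y], \qquad y \ge 0,
\end{equation}
together with its supercritical mirror (the roles of $\lambda$ and $\lambda_c$ swapped, valid for $\lambda \in (\lambda_c, \lambda_1)$), with constants depending only on $\mu, \lambda_0$ (resp.\ $\mu, \lambda_1$). This is a sharpening of \eqref{e:t2}, in which the ratio $\P_{\lambda_c}[\Vol(\C) \ge y]^{-1} \int_0^y \P_{\lambda_c}[\Vol(\C) \ge u] du$ is replaced by $y$, and is exactly the Gaussian tilt estimate one expects from the relative entropy method of the authors' previous work: tilting the Poisson intensity to $\lambda_c$ on a bounded random enlargement of $\C$ has Poisson relative entropy of order $(\lambda_c - \lambda)^2 \Vol(\C)$.

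For \eqref{e:m1}, I start from $\chi(\lambda) = \int_0^\infty \P_\lambda[\Vol(\C) \ge y] dy$, insert \eqref{e:reltilt}, and recognise the Laplace representation
\[
M(\rho) \;=\; \int_0^\infty \rho \, e^{-\rho y} \, \P_{\lambda_c}[\Vol(\C) > y] \, dy,
\]
which follows from $1 - e^{-\rho y} = \int_0^y \rho e^{-\rho u} du$ and Fubini. With $\rho = (\lambda_c - \lambda)^2$, this yields
\[
\chi(\lambda) \;\ge\; \tfrac{c_1}{c_2 \rho} \, M(c_2 \rho) \;\ge\; \tfrac{c_1 \min(1,c_2)}{c_2} \, \rho^{-1} M(\rho),
\]
where the last step uses that $M$ is concave with $M(0) = 0$ (as a mixture of concave functions $\rho \mapsto 1 - e^{-\rho v}$), so $M(a\rho) \ge a M(\rho)$ for $a \le 1$, combined with monotonicity for $a \ge 1$.

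For \eqref{e:m2}, given $\rho \in (0,1]$ I choose $\lambda_\rho = \lambda_c + \min(\sqrt{\rho/c_2}, (\lambda_1 - \lambda_c)/2) \in (\lambda_c, \lambda_1)$, so that $c_2 (\lambda_\rho - \lambda_c)^2 \le \rho$. The supercritical analogue of \eqref{e:reltilt} yields $\P_{\lambda_c}[\Vol(\C) \ge y] \ge c_1 e^{-\rho y} \P_{\lambda_\rho}[\Vol(\C) \ge y]$; multiplying by $\rho e^{-\rho y}$ and integrating,
\[
M(\rho) \;\ge\; c_1 \int_0^\infty \rho \, e^{-2\rho y} \P_{\lambda_\rho}[\Vol(\C) > y] \, dy \;=\; \tfrac{c_1}{2} M_{\lambda_\rho}(2\rho) \;\ge\; \tfrac{c_1}{2} \theta(\lambda_\rho) \;\ge\; \tfrac{c_0 c_1}{2 c_2^{\beta_0/2}} \rho^{\beta_0/2},
\]
where $M_\lambda(\rho) := \E_\lambda[1 - e^{-\rho \Vol(\C)}] \ge \P_\lambda[\Vol(\C) = \infty] = \theta(\lambda)$, and I used the hypothesis on $\theta$. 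In the other branch of the minimum, $\rho$ is bounded away from zero so the conclusion is trivial.

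The main obstacle is \eqref{e:reltilt} itself. The naive Radon--Nikodym derivative between $\P_\lambda$ and $\P_{\lambda_c}$ on all of $\R^d$ is singular, so the tilt must be localised to a region adapted to $\C$. The template algorithm announced in the preamble (and inherited from the authors' earlier work) should perform this by tilting the Poisson intensity only in a random enlargement of $\C$, with balls intersecting this enlargement from outside controlled via the radius-moment condition; the resulting Poisson relative entropy is then quadratic in $\lambda_c - \lambda$ and proportional to $\Vol(\C)$, which is precisely what \eqref{e:reltilt} demands.
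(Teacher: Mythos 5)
Your reductions of \eqref{e:m1} and \eqref{e:m2} to the claimed tilt estimate $\P_{\lambda}[\Vol(\C)\ge y]\ge c_1 e^{-c_2(\lambda_c-\lambda)^2 y}\,\P_{\lambda_c}[\Vol(\C)\ge y]$ (and its supercritical mirror) are correct as computations, but that estimate is precisely where the proof lives, and it is neither proven in your proposal nor obtainable by the method you invoke. The relative entropy method converts an entropy bound into a probability bound via Lemma \ref{l:bounds} (or Pinsker), and this conversion unavoidably produces the factor $\P_{\lambda_c}[A]^{-1}$ multiplying $D_{KL}$; this is exactly why the paper's bound \eqref{e:t2} carries the ratio $\P_{\lambda_c}[\Vol(\C)\ge y]^{-1}\int_0^y \P_{\lambda_c}[\Vol(\C)\ge u]\,du$, which is always at least $y$ and in general much larger, rather than $y$ itself. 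Your closing heuristic does not remove this factor: a direct Radon--Nikodym tilt of the intensity on a cluster-adapted region $W$ has density $(\lambda/\lambda_c)^{N_W}e^{(\lambda_c-\lambda)\PVol(W)}$, whose logarithm is \emph{linear} in $\lambda_c-\lambda$ pointwise; the quadratic quantity $(\lambda_c-\lambda)^2\PVol(W)$ only appears after averaging (that is what the relative entropy is), and any attempt to make the bound pointwise or conditional on $\{\Vol(\C)\ge y\}$ reintroduces either the first power of $\lambda_c-\lambda$ (times the number of points of $\eta$ in $W$, which is not comparable to $\Vol(\C)$ for unbounded or small radii) or the factor $\P_{\lambda_c}[A]^{-1}$. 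Note also that, if $\delta$ exists, your estimate is exactly the near-critical scaling bound $\P_{\lambda}[\Vol(\C)\ge y]\gtrsim y^{-1/\delta}\exp(-c(\lambda_c-\lambda)^2 y)$ discussed after \eqref{e:mfb2}, which the paper states has only recently been proven, and only in sufficiently high dimension \cite{hut20}; it is strictly stronger than \eqref{e:t2} and cannot serve as an input here. The supercritical mirror you need for \eqref{e:m2} is at least as delicate.

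For comparison, the paper sidesteps any pointwise tail tilt by working directly with the ghost-field event $\{0\longleftrightarrow\mathcal{G}\}$, using \eqref{e:mag}. For \eqref{e:m1} it applies the Pinsker-type bound of Proposition \ref{p:entropic2} between $\lambda$ and $\lambda_c$ with $\rho=(\lambda_c-\lambda)^2$, bounds $\E_{\lambda,\rho}[\PVol(\mathcal{W}_{\mathcal{T}})]$ by $c\,\chi(\lambda)$ (first part of Proposition \ref{p:vc3}), controls $\P_{\lambda,\rho}[0\longleftrightarrow\mathcal{G}]\le(\lambda_c-\lambda)^2\chi(\lambda)$ by Markov's inequality, and concludes by the same two-case disjunction used for \eqref{e:s2}. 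For \eqref{e:m2} it defines $\lambda(\rho)>\lambda_c$ by the doubling condition $\P_{\lambda(\rho),\rho}[0\longleftrightarrow\mathcal{G}]=2\P_{\lambda_c,\rho}[0\longleftrightarrow\mathcal{G}]$ (using the continuity from Corollary \ref{c:vc}), compares with $\theta(\lambda(\rho))\ge c_0(\lambda(\rho)-\lambda_c)^{\beta_0}$, and uses the third bound of Proposition \ref{p:vc3}, $\E_{\lambda_c,\rho}[\PVol(\mathcal{W}_{\mathcal{T}})]\le c\rho^{-1}M(\rho)$. In both arguments the problematic $\P[A]^{-1}$ factor is neutralised (by the disjunction, respectively the factor-two choice of $\lambda(\rho)$) rather than eliminated. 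Unless you can genuinely prove your tilt estimate, the proposal does not establish the theorem; if you could, you would have proved something substantially stronger than this paper does.
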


\begin{remark}
If we assume the existence of the exponents $\beta, \gamma$ and $\delta$, the results in Theorems~\ref{t:s}--\ref{t:m} imply the inequalities
\begin{equation}
\label{e:ineq}
 \gamma \ge 2 - \beta \ , \quad \gamma \ge 2 - 2/\delta \quad \text{and} \quad \delta \ge 2/\beta ,
 \end{equation}
which can be deduced, respectively, from \eqref{e:s1}, \eqref{e:s2} (or alternatively \eqref{e:t2} or \eqref{e:m1}) and \eqref{e:t1} (or alternatively \eqref{e:m2}); these inequalities are saturated if the exponents take their mean-field values $\beta = \gamma = 1$ and $\delta = 2$. While the inequalities \eqref{e:ineq} are classical in the context of Bernoulli percolation \cite{new86,new87,ab87}, to our knowledge they were not yet known for any dependent percolation model in general dimension $d \ge 2$. Even for Bernoulli percolation, our method to obtain these inequalities is novel and quite different from classical methods, since it does not rely on the analysis of differential inequalities.

\smallskip
Interestingly, to our knowledge the bound \eqref{e:t2} is new even in the context of Bernoulli percolation. Under the assumption that the exponent $\delta$ exists, this bound takes the form (see \eqref{e:mfb2} for a precise statement)
\[   \P_{\lambda}[ \Vol(\C) \ge y  ]  \gtrsim y^{-1/\delta} \exp(  - c (\lambda_c - \lambda)^2 y   ) ,      \]
and under the standard percolation near-critical scaling hypothesis (see \cite[Chapter 9]{gri99}) it is natural to expect that this bound is sharp, up to constants in the exponent, for $d > d_c = 6$ (this has recently been proven in sufficiently high dimension \cite{hut20}, and see also \cite{chs21,hms21} for related `near-critical scaling' results for Bernoulli percolation in sufficiently high dimension).
\end{remark}

To finish the section we verify that Theorem \ref{t:mfb} is a consequence of the above bounds:

\begin{proof}[Proof of Theorem \ref{t:mfb} assuming Theorems \ref{t:s}--\ref{t:m}]
Since we assume \eqref{a:str}, the bound on the density \eqref{e:mfbbeta} is available. Hence \eqref{e:s1} implies that $\chi(\lambda) \ge c (\lambda_c-\lambda)^{-1}$ for some $c > 0$ and $\lambda$ sufficiently close to $\lambda_c$, from which $\gamma \ge 1$ follows. 

To prove $\delta \ge 2$ we observe that if $\delta$ exists then, for every $\varepsilon > 0 $ and $y \ge y_0 = y_0(\varepsilon) \ge 1$,
\begin{equation}
\label{e:mfb0}
  y^{-1/\delta  - \varepsilon} \le  \P_{\lambda_c}[ \Vol(\C) \ge y  ]   \le y^{-1/\delta  + \varepsilon} .  
  \end{equation}
Hence \eqref{e:t1} implies that (taking $\beta_0 = 1$ since \eqref{e:mfbbeta} is available), for $y \ge y_0$,
\begin{equation}
\label{e:mfb1}
c \le \P_{\lambda_c}[ \Vol(\C) \ge y  ]  \int_0^y \P_{\lambda_c}[ \Vol(\C) \ge u  ]  \le y^{-1/\delta + \varepsilon} \Big( \int_{y_0}^y u^{-1/\delta  + \varepsilon}  du + y_0 \Big)    . 
\end{equation}
If $\delta < 1$ then taking $\varepsilon < 1/\delta - 1$ the right-hand side of \eqref{e:mfb1} tends to zero as $y \to \infty$, which is a contradiction. On the other hand if $\delta \ge 1$ then the right-hand side of \eqref{e:mfb1} is at most, for sufficiently large $y$,
\[     y^{-1/\delta + \varepsilon} \Big( (1-1/\delta+\varepsilon)^{-1} y^{1-1/\delta + \varepsilon} + y_0  \Big) \le  (1-1/\delta + \varepsilon/2)^{-1}  y^{1-2/\delta + 2\varepsilon} .\]
Sending $y \to \infty$ we deduce that $1-2/\delta + 2\varepsilon \ge 0$, and sending $\varepsilon \to 0$ yields that $\delta \ge 2$.

To establish $\triangle \ge 2$ we assume that $\delta \ge 2$ exists and use the bounds \eqref{e:mfb0} to deduce from \eqref{e:t2} that, for every $\varepsilon > 0 $,  $\lambda_0 < \lambda_c$, $\lambda \in (\lambda_0, \lambda_c)$, and $y \ge y_0$ for some $y_0 = y_0(\varepsilon) \ge 1$,
\begin{align}
\nonumber \P_{\lambda}[ \Vol(\C) \ge y  ]  &  \ge  c_1  y^{-1/\delta - \varepsilon } \exp \Big(  - c_2 (\lambda_c - \lambda)^2 y^{1/\delta + \varepsilon}   y^{1- 1/\delta + \varepsilon }    \Big) \\
\label{e:mfb2} & =   c_1  y^{-1/\delta -\varepsilon} \exp \Big(  - c_2 (\lambda_c - \lambda)^2 y^{1+2\varepsilon}    \Big) ,
\end{align}
for constants $c_1, c_2  > 0$ depending only on $\mu$ and $\lambda_0$. Then for every $k \in \N$, $\lambda \in (\lambda_0, \lambda_c)$, and $\varepsilon > 0$,
\[ \E_\lambda[  \Vol( \C )^k   ] = k  \int_{0}^\infty y^{k-1}  \P_{\lambda}[ \Vol(\C) \ge y  ] dy \ge k c_1 \int_{y_0}^\infty y^{(k-1)-1/\delta -\varepsilon } e^{-c_2(\lambda_c-\lambda)^2 y^{1+2\varepsilon } } dy . \]
Changing variables to $y' = (\lambda_c-\lambda)^2 y^{1+2\varepsilon}$, we deduce that \begin{align}
\nonumber \E_\lambda[  \Vol( \C )^k   ] & \ge   k  c_1 (\lambda_c-\lambda)^{-2/(1+2\varepsilon) \times (1-1/\delta + (k-1) - \varepsilon )} \int_{\lambda_c^2 y_0^{1+2\varepsilon}}^\infty (y')^{1/(1+2\varepsilon)\times(k-1/\delta -\varepsilon)-1 } e^{-c_2 y' } dy' \\
\label{e:mfb3} &  = c_3  (\lambda_c-\lambda)^{-2/(1+2\varepsilon) \times (1-1/\delta + (k-1)  - \varepsilon)} 
\end{align}
for all $\varepsilon > 0$, some $c_3 = c_3(\mu, \lambda_0, \delta, \varepsilon, k)$, and all $\lambda \in (\lambda_0, \lambda_c)$.
On the other hand, since we assume that the exponents $\gamma$ and $\triangle$ also exist, for each $k \in \N$,
\begin{equation}
\label{e:mfb4}
\E_\lambda[  \Vol( \C )^k   ]  =  (\lambda_c-\lambda)^{-\gamma - \triangle (k-1) + o(1) }  , \quad \lambda \uparrow \lambda_c . 
\end{equation}
Sending $\lambda \uparrow \lambda_c$ in \eqref{e:mfb3}, comparing with \eqref{e:mfb4}, and then sending $\varepsilon \to 0$, shows that
\[    \gamma + \triangle(k-1) \ge 2 - 2/\delta + 2(k-1)  , \]
which gives a contradiction for sufficiently large $k$ unless $\triangle \ge 2$.
\end{proof}

\subsection{Other possible definitions of the exponents}
\label{s:vol}
In certain applications it may be more natural to consider other definitions of the critical exponents $\gamma$, $\delta$ and $\triangle$. Our proof can be adapted in some cases but not all.

\subsubsection{Interpreting `volume' as the number of balls that comprise the cluster}
If the Poisson-Boolean model is considered as a random graph on the projection of $\eta$ onto $\R^d$ with connections induced by intersecting balls, it could be more natural to define the exponents relative to the \textit{number of balls that comprise the cluster of the origin}, $\# \C  =  |  (x, r) \in \eta : x \in  \C |$, rather than the Euclidean volume $\Vol(\C)$; this is how the exponent $\gamma$ is defined in the work \cite{hhlm19} mentioned above. In the case that the radius distribution $\mu$ is bounded it is straightforward to adapt our proof to show that the mean-field bounds hold for the exponents $\gamma$, $\delta$ and $\triangle$ defined in this way -- essentially this is due to the universal volume comparison $\# \C \ge c \Vol(\C)$ for some $c = c(\mu) > 0$. However this comparison is no longer valid if $\mu$ is unbounded, and we do not know whether the mean-field bounds hold in full generality in that case.

\subsubsection{Percolation of the vacant set} One could also consider the dual percolation problem associated with the \textit{vacancy set} $\mathcal{V}$ (the closure of the complement of $\mathcal{O}$) and define the associated critical exponents $\gamma$, $\delta$ and $\triangle$ relative to the volume of the component of $\mathcal{V}$ that contains the origin. Once again, in the case that $\mu$ is bounded we believe that our proof can be adapted to establish the mean-field bounds on $\gamma$, $\delta$ and $\triangle$, but in general there are additional complications (again the issue lies in establishing the relevant `volume comparison', analogous to Section \ref{s:vc1}). We leave the general case as an open problem.
 
\subsection{About the proof}
The proof of Theorems \ref{t:s}--\ref{t:m} makes use of a `relative entropy method' that can be summarised as follows. Combining exploration arguments with properties of the relative entropy, one can efficiently bound from above the relative entropy, for different $\lambda$, between the laws of certain well-chosen observables of $\mathcal{O}$ (such as $\Vol(\C)$). By applying this to $\lambda$ ranging between slightly subcritical to slightly subcritical, and combining with general entropic bounds (e.g.\ Pinsker's inequality), one can deduce information on critical exponents. 

\smallskip
This strategy was introduced by the authors in a previous work \cite{dm21}, where it was used to give bounds on the one-arm exponent for Bernoulli percolation and other Gaussian dependent percolation models. It was subsequently exploited in \cite{hut21} where it used to show, in the context of Bernoulli percolation, that mean-field behaviour of the susceptibility implies mean-field behaviour of the infinite cluster density and critical cluster volume.

\smallskip
In this paper we show how to adapt this method to the Poisson-Boolean model, where certain extra technical difficulties arise. As well as the applications in the current paper, we expect that similar applications to those developed in \cite{dm21, hut21} could also be implemented for the Poisson-Boolean model using this approach. We expect this method can also be adapted to other models in the Bernoulli universality class, such as Poisson-Voronoi percolation.

\subsection{Acknowledgements}
The second author was supported by the Australian Research Council (ARC) Discovery Early Career Researcher Award DE200101467. The authors thank Tom Hutchcroft for helpful comments, and also an anonymous referee for valuable and detailed feedback on an earlier version of this paper.

\medskip

\section{The entropic bounds}
\label{s:entropic}

In this section we establish the main entropic bounds (see Proposition \ref{p:entropic}) that will underpin the proof of Theorems \ref{t:s}--\ref{t:m}. These are similar to bounds proven in \cite{dm21} in the setting of Bernoulli and Gaussian percolation.

\smallskip
Recall that the Poisson-Boolean model $\mathcal{O}$ can be defined via \eqref{e:pbp} as a function of the Poisson point process $\eta$ on $\R^d \times \R_+$ with intensity $\lambda dx \otimes d\mu$. We now introduce the concept of a \textit{randomised algorithm} for the point process $\eta$. Let $C = (C_i)_{i \in \Z^d \times \Z_+}$ denote the set of hypercubes $C_i = i +  [0, 1]^d \times [0,1]$ which partition $\R^d \times \R_+$ up to boundaries. A \textit{(randomised) algorithm} $\mathcal{A}$ is an adapted procedure which sequentially reveals $\eta|_{C_{i_j}}$ for a random sequence of hypercubes $(C_{i_j})_{j \in \N}$, terminates at a (possibly infinite) stopping time $\tau \in \mathbb{N} \cup \{\infty\}$, and upon termination returns a value in $\{0, 1\}$. We denote by $\mathcal{W}_\mathcal{A}= \cup_{1 \le j \le \tau} C_{i_j}$ the union of the hypercubes revealed by $\mathcal{A}$. 

\smallskip
For an event $A$ measurable with respect to $\eta$, an algorithm is said to \textit{locally determine $A$} if, for every $\lambda > 0$, the following hold almost surely:
\begin{itemize}
\item For every $n \in \N$ there exists a (deterministic) finite subset $D_n \subset \Z^d \times \Z_+$, independent of $\lambda > 0$, such that $\cup_{1 \le j \le n} C_{i_j} \subset D_n$;
\item If the algorithm terminates in finite time, it returns the value $\id_A$;
\item On the event $A$, the algorithm terminates in finite time.
\end{itemize} 
In words, this means that the algorithm has only a bounded number of choices at each step, and if the event $A$ occurs the algorithm must verify this in finite time. Note that this allows the algorithm to never terminate if $A$ does not occur, which is convenient in our setting since to determine $\mathcal{O}$ on any set requires determining $\eta$ on an infinite number of hypercubes.

\smallskip
For a Borel subset $S \subset \R^d \times \R_+$, we write $\PVol(S)$ to denote its `Poisson-Boolean volume'
\[ \PVol(S) =  \int_S   dx \otimes d\mu  . \] 
Note that $\PVol (\mathcal{W}_\mathcal{A}) $ may be finite even if $\mathcal{A}$ does not terminate.

\begin{proposition}[Entropic bounds]
\label{p:entropic}
Let $A$ be an event and let $\mathcal{A}$ be an algorithm that locally determines $A$. Then for all $\lambda_1, \lambda_2 > 0$,
\[ |\P_{\lambda_1}[A]-\P_{\lambda_2}[A]|\leq  \frac{|\lambda_2-\lambda_1|}{\sqrt{\lambda_2}}\sqrt{2\max(\P_{\lambda_1}[A],\P_{\lambda_2}[A]) \E_{\lambda_1} [ \PVol (\mathcal{W}_\mathcal{A}) ] }  ,\]
and if $\P_{\lambda_1}[A]$ and $\P_{\lambda_2}[A]$ are both non-zero then
\[ \log \P_{\lambda_1}[A] -   \log \P_{\lambda_2}[A]  \le    \frac{ \lambda_2^{-1} (\lambda_2-\lambda_1)^2 \E_{\lambda_1} [ \PVol (\mathcal{W}_\mathcal{A}) ]  }{ \P_{\lambda_1}[A] }  + 1  .\]
\end{proposition}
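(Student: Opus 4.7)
The plan is to combine a relative-entropy computation for the Poisson process $\eta$ restricted to the algorithm's random revealed set $\mathcal{W}_\mathcal{A}$ with the data-processing inequality, following the approach of \cite{dm21} for Bernoulli and Gaussian percolation. The Poisson-specific input is the classical formula
\[ H\bigl(\P_{\lambda_1}^{\eta|_S} \,\big\|\, \P_{\lambda_2}^{\eta|_S}\bigr) \;=\; \PVol(S)\cdot g(\lambda_1,\lambda_2), \qquad g(\lambda_1,\lambda_2) := \lambda_1\log(\lambda_1/\lambda_2) - \lambda_1 + \lambda_2, \]
valid for any Borel $S \subset \R^d \times \R_+$, together with the elementary inequality $g(\lambda_1,\lambda_2) \le (\lambda_2-\lambda_1)^2/\lambda_2$, which after the substitution $r = \lambda_1/\lambda_2$ reduces to $r\log r - r + 1 \le (1-r)^2$ for $r > 0$ (a one-variable calculus check).

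Fix $n \in \N$, let $\mathcal{F}_n$ denote the $\sigma$-field generated by $((i_j, \eta|_{C_{i_j}}))_{1 \le j \le n \wedge \tau}$, and let $P_n, Q_n$ denote the restrictions of $\P_{\lambda_1}, \P_{\lambda_2}$ to $\mathcal{F}_n$. Conditionally on $\mathcal{F}_{j-1}$ and the event $\{\tau \ge j\}$, the next cube index $i_j$ is a deterministic function of the past (and thus contributes nothing to relative entropy), while $\eta|_{C_{i_j}}$ is an independent Poisson process on $C_{i_j}$ with intensity $\lambda_i \, dx \otimes d\mu$ under $\P_{\lambda_i}$. The chain rule for relative entropy together with the displayed formula therefore yields
\[ H(P_n \,\|\, Q_n) \;=\; g(\lambda_1,\lambda_2)\cdot \E_{\lambda_1}\!\Bigl[ \sum_{j=1}^{n\wedge \tau} \PVol(C_{i_j}) \Bigr] \;\le\; \frac{(\lambda_2-\lambda_1)^2}{\lambda_2}\, \E_{\lambda_1}[\PVol(\mathcal{W}_\mathcal{A})] . \]

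To conclude, set $A_n := \{\tau \le n\} \cap A$; by the local-determination hypothesis this event is $\mathcal{F}_n$-measurable, and since $A \subseteq \{\tau < \infty\}$ almost surely one has $A_n \uparrow A$ and $\P_{\lambda_i}[A_n] \uparrow \P_{\lambda_i}[A]$. Writing $H_\mathrm{bin}(p\|q)$ for the binary Kullback--Leibler divergence, data processing yields $H_\mathrm{bin}(\P_{\lambda_1}[A_n] \,\|\, \P_{\lambda_2}[A_n]) \le H(P_n\|Q_n)$. The two claimed bounds then follow by two elementary estimates on $H_\mathrm{bin}$ and passing $n \to \infty$ by monotone convergence. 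The first is the refined Pinsker-type bound $(p-q)^2 \le 2\max(p,q)\, H_\mathrm{bin}(p\|q)$, obtained by applying $\log(a/b) \ge 2(a-b)/(a+b)$ to each term of $H_\mathrm{bin}$, which yields the intermediate $H_\mathrm{bin}(p\|q) \ge 2(p-q)^2/[(p+q)(2-p-q)]$. The second is the one-sided logarithmic estimate $p\log(p/q) \le H_\mathrm{bin}(p\|q) + p$, which after invoking $\log x \le x - 1$ reduces to verifying $(1-p)\log((1-q)/(1-p)) \le p-q$ (trivial when $p \le q$, and bounded by $(1-p)\cdot(p-q)/(1-p) = p-q$ when $p > q$); dividing through by $\P_{\lambda_1}[A]$ then produces the logarithmic bound.

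The main technical subtlety is that the algorithm may fail to terminate, so the revealed volume $\PVol(\mathcal{W}_\mathcal{A})$ is a random variable possibly infinite on $A^c$ and $\mathcal{W}_\mathcal{A}$ is itself a random union of cubes. Working at finite $n$ throughout and deferring the limit until all bounds are in place sidesteps measurability issues and keeps the chain rule applicable without modification.
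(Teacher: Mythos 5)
Your overall architecture is essentially the paper's: truncate the exploration at time $n$, bound the relative entropy of the revealed data by $\lambda_2^{-1}(\lambda_2-\lambda_1)^2\,\E_{\lambda_1}[\PVol(\mathcal{W}_\mathcal{A})]$ via a per-hypercube Poisson computation combined with a chain rule along the adapted exploration (the paper packages these as Lemma \ref{l:poisson} and the stopping-time Lemma \ref{l:stl}), apply data processing to the indicator of $A_n=A\cap\{\tau\le n\}$, invoke two binary entropy inequalities, and let $n\to\infty$. Your Poisson formula, the estimate $r\log r-r+1\le(1-r)^2$, the treatment of $A_n\uparrow A$, and the one-sided logarithmic estimate $p\log(p/q)\le H_{\mathrm{bin}}(p\,\|\,q)+p$ (via $\log x\le x-1$) are all correct and parallel the paper's argument.

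There is, however, a genuine error in your derivation of the Pinsker-type bound. The inequality $\log(a/b)\ge 2(a-b)/(a+b)$ holds only when $a\ge b$: the function $x\mapsto\log x-2(x-1)/(x+1)$ has derivative $(x-1)^2/[x(x+1)^2]\ge 0$ and vanishes at $x=1$, so for $a<b$ the inequality reverses, and in $H_{\mathrm{bin}}(p\,\|\,q)=p\log(p/q)+(1-p)\log\big((1-p)/(1-q)\big)$ exactly one of the two terms always has $a<b$ (unless $p=q$). Worse, the intermediate inequality you extract, $H_{\mathrm{bin}}(p\,\|\,q)\ge 2(p-q)^2/[(p+q)(2-p-q)]$ (i.e.\ that the Kullback--Leibler divergence dominates the triangular discrimination), is false: for $p=0.2$, $q=0.25$ one computes $H_{\mathrm{bin}}(p\,\|\,q)\approx 0.00700$ while the right-hand side is $\approx 0.00717$. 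The target inequality $(p-q)^2\le 2\max(p,q)\,H_{\mathrm{bin}}(p\,\|\,q)$ is nevertheless true --- it is the first bound of Lemma \ref{l:bounds}, which the paper quotes from \cite[Lemma 2.12]{dm21} rather than reproving --- and a correct short proof is available from the representation $H_{\mathrm{bin}}(p\,\|\,q)=\int_p^q\frac{t-p}{t(1-t)}\,dt$, since on the integration range $t(1-t)\le\max(p,q)$, giving $H_{\mathrm{bin}}(p\,\|\,q)\ge(p-q)^2/(2\max(p,q))$. So your proof is repairable by citing that lemma or substituting this argument, but the step as written does not stand.
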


\begin{remark}
Our definition of an algorithm that `locally determines' an event is analogous to the notion of \textit{Borel computation} introduced in \cite[Section 3]{hut21} in the context of Bernoulli percolation.
\end{remark}

\begin{remark}
The bounds in Proposition \ref{p:entropic} are in terms of the Poisson-Boolean volume $\PVol(\mathcal{W}_\mathcal{A})$ of the revealed set, rather than volume in the ambient space $\R^d$. In Section \ref{s:vc} we show that, for the events we consider, we can convert between these two volumes up to multiplicative constants outside an event of negligible probability.

Related to this, instead of exploring $\eta$ restricted to unit hypercubes, one might try to work with $\eps$-scale hypercubes and take the limit $\eps \to 0$ so as to explore $\eta$ `in the continuum'. While Proposition \ref{p:entropic} itself does not depend on $\eps$, a difficulty arises when translating the bound in terms of volume in the ambient space, where the conversion degenerates with $\eps$.
\end{remark}

To analyse the magnetisation $M(\rho)$ we need to extend these bounds slightly. For $\rho > 0$, let $\mathcal{G}$ denote a Poisson point process on $\R^d$ with intensity $\rho$, independent of $\mathcal{O}$, with $\P_{\lambda,\rho}$ and $\E_{\lambda,\rho}$ denoting the joint law of $(\mathcal{O}, \mathcal{G})$ and corresponding expectation; we refer to $\mathcal{G}$ as the \textit{ghost field}. The relevance of $\mathcal{G}$ to the magnetisation is that, by conditioning on $\mathcal{O}$ and using the independence of $\mathcal{G}$ and $\mathcal{O}$,
\begin{equation}
    \label{e:mag}
 \P_{\lambda_c, \rho}[0 \longleftrightarrow \mathcal{G} ] = \E_{\lambda_c}\big[     \P_{\lambda_c, \rho}[ | \C \cap \mathcal{G}| \ge 1    \, | \,  \mathcal{O} ]  \big] =  \E_{\lambda_c}\big[1 - e^{-\rho \Vol(\mathcal{C})} \big]  =: M(\rho) .
 \end{equation} 
In the presence of $\mathcal{G}$, a (randomised) algorithm $\mathcal{A}$ is an adapted procedure which first reveals $\mathcal{G}$ and then sequentially reveals $\eta$ for a random sequence of hypercubes $(C_{i_j})_{1 \le j \le \tau}$, with $\mathcal{W}_\mathcal{A}$ defined as before. We extend the definition of the algorithm `locally determining' an event in the natural way.

\begin{proposition}[Entropic bounds with the ghost field]
\label{p:entropic2}
Let $A$ be an event that depends on $(\mathcal{G}, \eta)$ and let $\mathcal{A}$ be an algorithm that locally determines $A$. Then for all $\lambda_1, \lambda_2 > 0$ and $\rho>0$, the conclusion of Proposition \ref{p:entropic} holds with $\P_{\lambda_i}$ and $\E_{\lambda_i}$ replaced by $\P_{\lambda_i, \rho}$ and $\E_{\lambda_i, \rho}$ respectively.
\end{proposition}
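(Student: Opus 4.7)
The plan is to extend the argument for Proposition~\ref{p:entropic} to the joint law of $(\mathcal{G}, \eta)$, exploiting the fact that the ghost field does not interact with the $\lambda$-dependence of the underlying intensity. The central observation is that under $\P_{\lambda, \rho}$ the fields $\mathcal{G}$ and $\eta$ are independent and the marginal law of $\mathcal{G}$ does not depend on $\lambda$, so the Radon--Nikodym derivative driving the entropic estimate picks up no contribution from $\mathcal{G}$.

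Concretely, I would view $(\mathcal{G}, \eta)$ under $\P_{\lambda, \rho}$ as a single Poisson point process on the disjoint union $\R^d \sqcup (\R^d \times \R_+)$, and let $\mathcal{F}_\mathcal{A}$ denote the $\sigma$-algebra generated by $\mathcal{G}$ together with $\eta|_{\mathcal{W}_\mathcal{A}}$. The local-determination hypothesis, extended in the natural way to the joint setting, ensures that $\id_A$ is $\mathcal{F}_\mathcal{A}$-measurable on the event that $\mathcal{A}$ terminates, in exact parallel with the ghost-free case. The key identity is
\[ \frac{d\P_{\lambda_2, \rho}}{d\P_{\lambda_1, \rho}}\bigg|_{\mathcal{F}_\mathcal{A}} = \frac{d\P_{\lambda_2}}{d\P_{\lambda_1}}\bigg|_{\sigma(\eta|_{\mathcal{W}_\mathcal{A}})}, \]
which follows from the product structure $\P_{\lambda_i, \rho} = \P^{\mathcal{G}}_\rho \otimes \P_{\lambda_i}$ and the $\lambda$-independence of $\P^{\mathcal{G}}_\rho$. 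With this identity in hand, every step of the proof of Proposition~\ref{p:entropic} (change of measure, Pinsker-type inequality, and identification of the revealed-volume factor as $\E_{\lambda_1, \rho}[\PVol(\mathcal{W}_\mathcal{A})]$) transfers verbatim, yielding the stated bounds without any loss of constants.

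I expect this to be essentially mechanical once the correct enlarged $\sigma$-algebra $\mathcal{F}_\mathcal{A}$ is in place, and the only real work is a careful bookkeeping of what is revealed at each stage of the algorithm. The main pitfall I would consciously avoid is the superficially tempting alternative of first conditioning on $\mathcal{G} = g$, applying Proposition~\ref{p:entropic} to the $\eta$-measurable event $A_g$, and then integrating over $g$: the $\sqrt{\max(\cdot,\cdot)}$ structure of the first bound interacts poorly with Jensen and Cauchy--Schwarz, and a direct integration loses a multiplicative factor of $\sqrt{2}$. Working with the extended $\sigma$-algebra $\mathcal{F}_\mathcal{A}$ from the outset sidesteps this and preserves the constants stated in Proposition~\ref{p:entropic}.
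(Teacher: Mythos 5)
Your proposal is correct and takes essentially the same route as the paper: both rest on the observation that $\mathcal{G}$ has the same law under $\P_{\lambda_1,\rho}$ and $\P_{\lambda_2,\rho}$ and therefore contributes nothing to the relative entropy, after which the proof of Proposition \ref{p:entropic} runs unchanged with the revealed-volume factor $\E_{\lambda_1,\rho}[\PVol(\mathcal{W}_\mathcal{A})]$. The paper makes your ``no contribution from the ghost field'' step rigorous by appending $\mathcal{G}$ as an extra coordinate $X_0$ with $D_{KL}(X_0 \| Y_0)=0$ to the stopped sequences in Lemma \ref{l:stl} (within the same truncation $\mathcal{A}_n$ and limit $n\to\infty$), which is precisely the bookkeeping your sketch defers to.
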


The remainder of the section is devoted to proving Propositions \ref{p:entropic} and \ref{p:entropic2}. For this we first recall some basic properties of the relative entropy, and then present a `stopping time lemma' that is the main ingredient in the proof. 
 
\subsection{Basic properties of the relative entropy}
\label{s:basic}
Let $P$ and $Q$ be probability measures defined on a common measurable space. The \textit{relative entropy (or Kullback-Leibler divergence) from $P$ to $Q$} is defined as
\[   D_{KL}(P || Q) := \int \log \Big( \frac{dP}{dQ} \Big) \, dP  \]
if $P$ is absolutely continuous with respect to~$Q$, and $D_{KL}(P || Q) := \infty$ otherwise; $D_{KL}(P || Q)$ is non-negative by Jensen's inequality. If $X$ and $Y$ are random variables taking values in a common measurable space, with respective laws $P$ and $Q$, we also write $D_{KL}(X || Y)$ for $D_{KL}(P||Q)$. We recall two basic properties of the relative entropy (see e.g \cite[Theorem 2.2 and Corollary 3.2]{kul78} and \cite[Theorem D.13]{dem10}):
\begin{enumerate}
\item(Chain rule) Let $X = (X_1, X_2)$ and $Y = (Y_1, Y_2)$ be random variables on a common product of Borel spaces. Then 
\begin{equation}
\label{e:chain}
D_{KL}(X || Y) = D_{KL}(X_1 || Y_1) +  \mathbb{E}_{x \sim X_1} \big[  D_{KL}( (X_2 | X_1 = x)  || (Y_2 | Y_1 = x ) ) \big] . 
\end{equation}
If particular if $X_2$ and $Y_2$ are identically distributed and independent of $X_1$ and $Y_1$ respectively, then
\begin{equation}
\label{e:chain2}
D_{KL}(X || Y) = D_{KL}(X_1 || Y_1)  . 
\end{equation}
\item(Contraction) Let $X$ and $Y$ be random variables taking values in a common measurable space and let $F$ be a measurable map from that space. Then 
\begin{equation}
\label{e:contract}
 D_{KL}( F(X) || F(Y) )  \le  D_{KL}( X || Y )   .
 \end{equation}
\end{enumerate}

We shall also make use of the following general bounds involving the relative entropy:

\begin{lemma}
\label{l:bounds}
Let $P$ and $Q$ be probability measures on a common measurable space and let $A$ be an event. Then
\[ |P(A) - Q(A)| \le  \sqrt{ 2 \max\{ P(A), Q(A) \} D_{KL}(P \| Q)} , \]
and if $P[A]$ and $Q[A]$ are both non-zero then
\[\log P[A]  -  \log Q[A] \le  \frac{ D_{KL}( P || Q) }{P[A] }  + 1    .\]
\end{lemma}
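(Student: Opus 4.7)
My plan is to reduce both inequalities to the Bernoulli case via the contraction property \eqref{e:contract} applied to the indicator $\mathbf{1}_A$, and then verify the resulting two-parameter scalar inequalities by elementary means. Writing $p = P(A)$, $q = Q(A)$, and $d(p\|q) := p\log(p/q) + (1-p)\log((1-p)/(1-q))$ for the Bernoulli relative entropy, contraction yields $D_{KL}(P\|Q) \ge d(p\|q)$, so it suffices to prove the two scalar inequalities $(p-q)^2 \le 2\max\{p,q\}\,d(p\|q)$ and $\log(p/q) \le d(p\|q)/p + 1$; these are exactly the statements after square-rooting (resp.\ rearranging) the bounds of the lemma.

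The second inequality is a direct consequence of the elementary estimate $\log t \ge 1 - 1/t$ (for $t > 0$) applied at $t = (1-p)/(1-q)$, which yields $(1-p)\log((1-p)/(1-q)) \ge q - p$. Substituting this into the definition of $d(p\|q)$ and rearranging gives $p\log(p/q) \le d(p\|q) + (p-q) \le d(p\|q) + p$, and dividing through by $p$ gives the bound. Note that this step controls the (potentially negative) tail contribution by the exact amount $p - q$, which is precisely what allows the constant $1$ to appear on the right-hand side.

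For the first inequality, I would give a short calculus argument in two cases. Assume first $p \ge q$ and set $h(q) := 2p\,d(p\|q) - (p-q)^2$ on $(0, p]$. Using $\partial_q d(p\|q) = (q-p)/(q(1-q))$ one computes $h'(q) = 2(p-q)\bigl(1 - p/(q(1-q))\bigr)$. Since $q(1-q) \le q \le p$ the bracketed factor is non-positive, so $h'(q) \le 0$ on $(0,p]$; combined with $h(p) = 0$ this yields $h(q) \ge 0$. The case $q \ge p$ proceeds analogously with $h(q) := 2q\,d(p\|q) - (q-p)^2$, whose derivative $h'(q) = 2d(p\|q) + 2(q-p)q/(1-q)$ is manifestly non-negative on $[p,1)$. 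I do not anticipate any serious obstacle: the contraction step is standard, the second inequality is a one-line manipulation, and the first reduces to a routine monotonicity check on two branches; the only minor subtlety is the case split $p \gtrless q$, which is forced by the asymmetry of the $\max\{p,q\}$ factor on the right-hand side.
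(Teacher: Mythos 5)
Your proposal is correct, and it is worth noting where it departs from the paper. For the second inequality the two arguments are essentially the same in substance: the paper applies Jensen's inequality to $\int_A \log\frac{dQ}{dP}\,dP + \int_{A^c}\log\frac{dQ}{dP}\,dP$, which is exactly the reduction to the two-point partition $\{A,A^c\}$ that you obtain via the contraction property \eqref{e:contract}; the only difference is the elementary bound on the complementary term — you use $\log t \ge 1-1/t$ at $t=(1-p)/(1-q)$ to get $(1-p)\log\frac{1-p}{1-q} \ge q-p \ge -p$, while the paper bounds $Q[A^c]\le 1$ and uses $\sup_{x\in[0,1]}\frac{1-x}{x}\log\frac{1}{1-x}=1$; both yield the additive constant $1$. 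For the first (Pinsker-variant) inequality, however, the paper simply cites \cite[Lemma 2.12]{dm21} and gives no proof, whereas you supply a self-contained one: contraction to the Bernoulli divergence $d(p\|q)$ followed by the two monotonicity checks in $q$ on the branches $q\le p$ and $q\ge p$. Your derivative computations are right ($\partial_q d(p\|q)=(q-p)/(q(1-q))$, the sign analysis on each branch is valid, and $h(p)=0$ closes both cases), so this gives a complete in-paper proof where the original relies on an external reference; the cost is only the routine case split and the (trivially handled) boundary cases $p,q\in\{0,1\}$, where $d(p\|q)=\infty$ or both sides vanish.
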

\begin{proof}
The first statement is a variant of Pinsker's inequality, and is proven in \cite[Lemma 2.12]{dm21}. For the second statement, by Jensen's inequality
\begin{align*}
-D_{KL}(P \| Q)  = \int  \log \frac{dQ}{dP} dP   &= \int_A \log \frac{dQ}{dP} dP  + \int_{A^c} \log \frac{dQ}{dP} dP   \\
&\le P[A] \Big( \log \frac{Q[A]}{P[A]}  + \frac{P[A^c]}{P[A]}   \log \frac{Q[A^c]}{P[A^c]}  \Big) \\
& \le  P[A] \Big( \log \frac{Q[A]}{P[A]}  + 1\Big) ,
\end{align*}
where the last inequality is since $Q[A^c] \le 1$ and $\sup_{x \in [0, 1]} \frac{1-x}{x} \log \frac{1}{1-x} = 1$, which is easily verified. Rearranging gives the result.
\end{proof}

Finally we need a bound for the relative entropy between Poisson point processes with proportionate intensities:
\begin{lemma}
\label{l:poisson}
Let $(D, \mathcal{A}, \mu)$ be a finite Borel space, and let $X$ and $Y$ be Poisson point processes on $D$ with respective intensities $\lambda_X d\mu$ and $\lambda_Y d\mu$ with $\lambda_X, \lambda_Y > 0$. Then 
\[ D_{KL}(X || Y)\leq \mu(D)  \frac{(\lambda_Y-\lambda_X)^2}{\lambda_Y}. \]
\end{lemma}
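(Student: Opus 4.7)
The plan is to reduce the relative entropy between the two Poisson point processes to the relative entropy between the one-dimensional distributions of their point counts, and then compute the latter explicitly.

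Recall that a Poisson point process on $D$ with intensity $\lambda d\mu$ (with $\mu(D) < \infty$) admits the following canonical decomposition: the total number of points $N$ is distributed as $\mathrm{Poisson}(\lambda \mu(D))$, and conditionally on $\{N = n\}$, the $n$ points are i.i.d.\ with common law $d\mu/\mu(D)$. Writing $X \sim (N_X, (U_i)_{i \le N_X})$ and $Y \sim (N_Y, (V_i)_{i \le N_Y})$ in this representation, the conditional laws of the i.i.d.\ sequences given the counts are the same under $X$ and under $Y$, since neither depends on the intensity. Applying the chain rule \eqref{e:chain} therefore yields
\[
D_{KL}(X \| Y) = D_{KL}(N_X \| N_Y) .
\]

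For the second step, a direct computation with the Poisson mass function gives the classical formula
\[
D_{KL}\big(\mathrm{Poisson}(a) \, \big\| \, \mathrm{Poisson}(b)\big) = a \log\frac{a}{b} + b - a ,
\]
for $a,b > 0$. Applying this with $a = \lambda_X \mu(D)$ and $b = \lambda_Y \mu(D)$ and setting $t = \lambda_X/\lambda_Y$, one finds
\[
D_{KL}(N_X \| N_Y) = \lambda_Y \mu(D)\,(t \log t - t + 1) .
\]

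To conclude it suffices to prove the elementary inequality $t \log t - t + 1 \le (1-t)^2$ for all $t > 0$. Rearranging, this amounts to $t(t - 1 - \log t) \ge 0$, which is immediate from the standard bound $\log t \le t - 1$ valid for every $t > 0$. Substituting back gives
\[
D_{KL}(X \| Y) \le \lambda_Y \mu(D) (1 - \lambda_X/\lambda_Y)^2 = \mu(D) \frac{(\lambda_Y - \lambda_X)^2}{\lambda_Y} ,
\]
which is the claimed bound. There is no real obstacle here; the only point requiring a little care is the chain-rule reduction, which relies on expressing both point processes in a common product form where the conditional-given-count marginals coincide.
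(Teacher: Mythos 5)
Your proof is correct and follows essentially the same route as the paper: both reduce the problem to the relative entropy between the one-dimensional counts $\mathrm{Poisson}(\lambda_X\mu(D))$ and $\mathrm{Poisson}(\lambda_Y\mu(D))$ via the count-plus-i.i.d.-locations representation, and then bound the explicit Poisson formula using $\log t \le t-1$. The only cosmetic difference is that the paper obtains the reduction by the contraction property applied to a shared i.i.d.\ sequence of locations together with the chain rule (yielding an inequality, which suffices), whereas you condition on the count and observe that the conditional laws coincide; both variants are fine.
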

\begin{proof}
Let $U$ and $V$ be Poisson random variables with respective parameter $\lambda_X \mu(D)$ and $\lambda_Y \mu(D)$, and let $S = (S_i)_{i \ge 1}$ be an i.i.d.\ sequence of random variables in $D$ with distribution proportional to $\mu$. Then $X \stackrel{d}{=} \sum_{i \le U} \delta_{S_i}$ and $Y \stackrel{d}{=} \sum_{i \le V} \delta_{S_i}$, where $\delta$ is a unit Dirac mass, so that $X$ (resp. $Y$) is a measurable function of $(U,S)$ (resp.\ $(V,S)$). Further, $U$ (resp.\ $V$) and $S$ are both defined on Borel spaces. Hence by the contraction property \eqref{e:contract} and the chain rule \eqref{e:chain2},
\[ D_{KL}(X || Y)  \le D_{KL}\big( (U, S)  ||  (V, S) \big)  =    D_{KL}\big( \textrm{Pois}(\lambda_X \mu(D) ) ||  \textrm{Pois}(\lambda_Y \mu(D) ) \big) . \]
A simple computation shows that 
\[  D_{KL}\big( \textrm{Pois}(\lambda_1)  ||  \textrm{Pois}(\lambda_2) \big) =    \lambda_2-\lambda_1 +\lambda_1 \log ( \lambda_1 / \lambda_2)  \]
for any $\lambda_1, \lambda_2 > 0$. Hence using the bound $\log x \le x-1$, valid for all $x > 0$, we have 
\begin{equation*}
 D_{KL}(X || Y)  \leq \mu(D) \Big( \lambda_Y-\lambda_X +\lambda_X \Big(\frac{\lambda_X-\lambda_Y}{\lambda_Y} \Big) \Big)  = \mu(D)  \frac{(\lambda_Y-\lambda_X)^2}{\lambda_Y}   . \qedhere
\end{equation*}
\end{proof}

\subsection{The stopping time lemma}
The proof of Proposition \ref{p:entropic} relies on an exact formula for the relative entropy between stopped sequences of independent random variables; this is a generalisation of \cite[Lemma 2.11]{dm21} which considered the i.i.d.\ case.

\smallskip
Fix $n \ge 1$ and a collection $(E_i)_{1 \le i \le n}$ of Borel spaces, and consider the product space $E = E_1 \times \cdots \times E_n$. A \textit{decision tree} $\sigma$ with \textit{stopping time} $\tau$ is a measurable mapping $x \in E \mapsto (\sigma_i(x))_{1 \le i \le n}, \tau) \in \{1, \ldots , n\}^n \times \{1, \ldots , n\}$ such that (i) $\sigma_k(x)$ and $\{\tau(x) \ge k\}$ are determined by $(x_{\sigma_i(x)})_{i \le k-1}$, and (ii) $\sigma_i(x)\neq \sigma_j(x)$ for all $j<i$. We define the corresponding \textit{stopped sequence} $x^\tau$ as $x^\tau_i = x_{\sigma_i(x)}$ for $i \le \tau$, and $x^\tau_i = \dagger$ for $i > \tau$, where $\dagger$ is an arbitrary symbol.

\begin{lemma}
\label{l:stl}
Let $X= (X_i)_{1 \le i \le n}$ and $Y = (Y_i)_{1 \le i \le n}$ be sequences of independent random variables such that $X_i$ and $Y_i$ take values in $E_i$ with respective laws $\nu_{X,i}$ and $\nu_{Y,i}$ which are mutually absolutely continuous. Let $\sigma$ be a decision tree with stopping time $\tau$, and let $X^\tau$ and $Y^\tau$ be the corresponding stopped sequences. Then
\[ D_{KL} \big(   X^\tau \big\|  Y^\tau \big)  =  \E \Big[\sum\limits_{k\leq \tau(X)} d(\sigma_k(X)) \Big] ,\]
where $d(i) = D_{KL}( \nu_{X,i}  \|  \nu_{Y,i}  )$.
\end{lemma}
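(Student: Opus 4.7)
The plan is to apply the chain rule for relative entropy \eqref{e:chain} coordinate-by-coordinate along the stopped sequence, and to check that each conditional contribution reduces to $d(\sigma_k)$ thanks to the two structural properties of the decision tree.

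First I would iterate the chain rule to write
\[ D_{KL}(X^\tau \| Y^\tau) = \sum_{k=1}^n \E_{z \sim X^\tau_{<k}}\Big[ D_{KL}\big( (X^\tau_k \mid X^\tau_{<k}=z) \,\big\|\, (Y^\tau_k \mid Y^\tau_{<k}=z) \big) \Big], \]
where $X^\tau_{<k} = (X^\tau_1,\dots,X^\tau_{k-1})$. This is justified because each $X^\tau_i$ takes values in the Borel space $E_i \cup \{\dagger\}$, so the chain rule applies.

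Next I would analyse the conditional relative entropy at step $k$. Fix a realisation $z = (z_1,\dots,z_{k-1})$ of $X^\tau_{<k}$. By definition of a decision tree, both $\sigma_k$ and the event $\{\tau \ge k\}$ are measurable functions of $(x_{\sigma_i(x)})_{i<k}$ alone, hence of $z$ alone. On $\{\tau < k\}$, both $X^\tau_k$ and $Y^\tau_k$ are deterministically equal to $\dagger$, so the conditional relative entropy is zero. On $\{\tau \ge k\}$, condition (ii) in the definition of the decision tree ensures that $\sigma_k \notin \{\sigma_1,\dots,\sigma_{k-1}\}$; since the coordinates of $X$ are independent, the conditional law of $X_{\sigma_k}$ given $(X_{\sigma_1},\dots,X_{\sigma_{k-1}}) = z$ is exactly $\nu_{X,\sigma_k}$, and similarly for $Y$. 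Consequently the conditional relative entropy equals $d(\sigma_k(X))$ on this event and $0$ otherwise.

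Summing these contributions yields
\[ D_{KL}(X^\tau \| Y^\tau) = \sum_{k=1}^n \E\!\left[ d(\sigma_k(X)) \, \id_{\{\tau(X) \ge k\}} \right] = \E\Big[ \sum_{k \le \tau(X)} d(\sigma_k(X)) \Big], \]
which is the desired identity. The main subtlety is the second step: verifying that conditioning on the revealed values $(X_{\sigma_1},\dots,X_{\sigma_{k-1}}) = z$ does not distort the marginal law of $X_{\sigma_k}$. This is precisely where the non-repetition property (ii) of the decision tree is essential — without it, one would be conditioning $X_{\sigma_k}$ on itself. The mutual absolute continuity of $\nu_{X,i}$ and $\nu_{Y,i}$ plays its usual role of ensuring each $d(i)$ is well-defined in $[0,\infty]$ and that the identity is consistent in the case of infinite entropy.
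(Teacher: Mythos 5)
Your proposal is correct and follows essentially the same route as the paper: iterate the chain rule \eqref{e:chain} along the stopped sequence, note that $\sigma_k$ and $\{\tau \ge k\}$ are determined by the previously revealed values, and use the non-repetition property (ii) together with independence to identify the conditional contribution at step $k$ as $d(\sigma_k(X))\,\id_{\{\tau(X)\ge k\}}$. The only difference is presentational (a telescoping sum in one pass versus the paper's induction on $k$), so nothing further is needed.
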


\begin{proof}
Define $X^{k \wedge \tau} =  (X^\tau_i)_{i \le k}$ and analogously for $Y$. By the chain rule \eqref{e:chain}, for each $k \ge 1$,
\begin{align*}
& D_{KL} \big(   X^{(k+1) \wedge \tau} \big\|   Y^{(k+1) \wedge \tau} \big) \\
&  \quad  =  D_{KL} \big(   X^{k \wedge \tau}  \big\| Y^{k \wedge \tau}  \big)   \! +  \E_{x \sim (X^\tau_i)_{i \le k}} \!  \big[ D_{KL} \! \left( X^\tau_{k+1} \big| (X^\tau_i)_{i \le k} \! = \! x  \big\| Y^\tau_{k+1}  \big| (Y^\tau_i)_{i \le k}  \! = \! x \right)  \! \big] \\
& \quad =  D_{KL} \big(   X^{k \wedge \tau}  \big\| Y^{k \wedge \tau}  \big)   \! \\
&\quad \quad +  \E_{x \sim (X^\tau_i)_{i \le k}} \!  \Big[  \id_{\tau(X) \ge k+1} D_{KL}  \! \left( X^\tau_{k+1} \big| (X^\tau_i)_{i \le k} \! = \! x  \big\| Y^\tau_{k+1}  \big| (Y^\tau_i)_{i \le k}  \! = \! x \right)  \!\Big]  .
\end{align*}
Since $\sigma_{k+1}(x)\neq \sigma_j(x)$ for all $j\leq k$, $X_{\sigma_{k+1}(X)}$ depends on $(X^\tau_i)_{i \le k}$ only through $\sigma_{k+1}(X)$. Hence 
\[  D_{KL}  \! \left( X^\tau_{k+1} \big| (X^\tau_i)_{i \le k} \! = \! x  \big\| Y^\tau_{k+1}  \big| (Y^\tau_i)_{i \le k}  \! = \! x \right) =  d(\sigma_{k+1}(x)) , \]
 and so by induction,
\begin{equation*}
 D_{KL} \big(   X^\tau \big\|   Y^\tau \big)  =   \sum_{1 \le k \le n}\E\Big[ \id_{\tau(X) \ge k} d(\sigma_k(X)) \Big]   =   \E\Big[\sum\limits_{k\leq \tau(X)} d(\sigma_k(X)) \Big].    \qedhere
\end{equation*}
\end{proof}

\subsection{Proof of Propositions \ref{p:entropic} and \ref{p:entropic2}}
First we prove Proposition \ref{p:entropic}. It is convenient to work with a truncation of the algorithm $\mathcal{A}$ and the event $A$. Precisely, for $n \in \N$, we define $\mathcal{A}_n$ to be the algorithm that explores the same hypercubes as $\mathcal{A}$ except terminates at $\tau_n = \min\{\tau, n\}$ and returns value $0$ if $\tau > n$, and define also the event $A_n = A \cap \{\tau \le n \}$. Since $\mathcal{A}$ locally determines $A$, the truncated algorithm and event satisfies the following properties for every $\lambda > 0$:
\begin{itemize}
\item For every $n \in \N$, $\mathcal{A}_n$ explores a subset of a deterministic finite set $D_n \subset \Z^d \times \Z_+$ of hypercubes and terminates in bounded time $\tau_n \le n$ with value $\id_{A_n}$;
\item $\P_{\lambda}[A_n] \to \P_\lambda[A]$ as $n \to \infty$; and 
\item For every $n \in \N$, $\E_{\lambda} [ \PVol (\mathcal{W}_{\mathcal{A}_n}) ] \le \E_{\lambda} [ \PVol (\mathcal{W}_{\mathcal{A}}) ]$.
\end{itemize}
Now let $X = (X_i)_{i \in D_n}$ and $Y = (Y)_{i \in D_n}$ be sequences of independent point processes such that $X_i$ and $Y_i$ are distributed as $\eta|_{C_i}$ under $\P_{\lambda_1}$ and $\P_{\lambda_2}$ respectively. Since $X_i \stackrel{d}{=} \sum_{j \le N_i} \delta_{S_j^i}$, where $N_i \stackrel{d}{=} \textrm{Pois}(\eta(C_i))$ and $(S^i_j)_j$ is an i.i.d.\ sequence of random variables on $C_i$ with distribution proportional to $\eta|_{C_i}$, we can view $X_i$ as a random variable taking values in a Borel space. In particular, the results in Section \ref{s:basic} apply to $X$ and $Y$. 

The algorithm $\mathcal{A}_n$ defines a decision tree $\sigma$ and stopping time $\tau$ for the sequences $X$ and $Y$, and since $\mathcal{A}_n$ determines the value of $\id_{A_n}$, by the contraction property \eqref{e:contract} the relative entropy between the law of $\id_{A_n}$ under $\P_{\lambda_1}$ and $\P_{\lambda_2}$ is at most $D_{KL} \big(   X^\tau \big\|   Y^\tau \big)$.
By Lemma \ref{l:poisson} we have, for any $i \in D_n$,
\begin{equation}
\label{e:entropic1}
 d(i) = D_{KL}(X_i || Y_i)  \le \frac{(\lambda_2-\lambda_1)^2}{\lambda_2} \PVol (C_i)  .  
 \end{equation}
 Applying Lemma~\ref{l:stl} and \eqref{e:entropic1} we have
\begin{align}
 \nonumber D_{KL} \big(   X^\tau \big\|   Y^\tau \big)   & = \E_{\lambda_1} \Big[\sum\limits_{k\leq \tau(X)}d( \sigma_k(X)  )\Big]  \le    \E_{\lambda_1}\Bigg[\sum_{k \le \tau(X)} \frac{(\lambda_2-\lambda_1)^2}{\lambda_2} \PVol(C_{\sigma_k(X)}) \Bigg]  \\
 \label{e:entropic2} & = \frac{(\lambda_2-\lambda_1)^2}{\lambda_2} \E_{\lambda_1}\Big[  \PVol \big( \cup_{k \le \tau(X)}   C_{\sigma_k(X)} \Big) \Big] =\frac{(\lambda_2-\lambda_1)^2}{\lambda_2}\E_{\lambda_1} [\PVol (\mathcal{W}_{\mathcal{A}_n} ) ] .
 \end{align}
 Combining with Lemma \ref{l:bounds} yields the bounds 
\[ |\P_{\lambda_1}[A_n]-\P_{\lambda_2}[A_n]|\leq  \frac{|\lambda_2-\lambda_1|}{\sqrt{\lambda_2}}\sqrt{2\max(\P_{\lambda_1}[A_n],\P_{\lambda_2}[A_n]) \E_{\lambda_1} [ \PVol (\mathcal{W}_{\mathcal{A}_n}) ] }  \]
and
\[ \log \P_{\lambda_1}[A_n] -   \log \P_{\lambda_2}[A_n]  \le    \frac{ \lambda_2^{-1} (\lambda_2-\lambda_1)^2 \E_{\lambda_1} [ \PVol (\mathcal{W}_{\mathcal{A}_n}) ]  }{ \P_{\lambda_1}[A_n] }  + 1  ,\]
and taking $n \to \infty$ establishes Proposition \ref{p:entropic}.

For Proposition \ref{p:entropic2} we redefine $X = (X_i)_{i \in D_n \cup \{0\}}$ and $Y = (Y_i)_{i \in D_n \cup \{0\}}$ where $X_0$ and $Y_0$ are independent point processes distributed as $\mathcal{G}$ under $\P_{\lambda_1,\rho}$ and $\P_{\lambda_2,\rho}$. Since 
\[  d(0) = D_{KL}(X_0 || Y_0) = 0  ,    \]
the same argument that led to \eqref{e:entropic2} gives
\begin{align*}
 D_{KL} \big(   X^\tau \big\|   Y^\tau \big) &    \le    \E_{\lambda_1,\rho}\Bigg[\sum_{k \le \tau(X)} \id_{\sigma_k(X) \neq 0} \frac{(\lambda_2-\lambda_1)^2}{\lambda_2} \PVol(C_{\sigma_k(X)}) \Bigg]  \\
 &  =\frac{(\lambda_2-\lambda_1)^2}{\lambda_2}\E_{\lambda_1,\rho} [\PVol (\mathcal{W}_{\mathcal{A}_n} ) ]  .
\end{align*}  
Combining with Lemma \ref{l:bounds} and taking $n \to \infty$ completes the proof as before

\medskip

\section{Volume comparison}
\label{s:vc}

In this section we establish volume comparison results which allow us to convert Poisson-Boolean volume, as appears in the entropic bounds from the previous section, into volume in the ambient space which is relevant for the main results.

\subsection{From the percolation volume to the number of intersecting cubes}
\label{s:vc1}

We first compare the volume of a subset of the Poisson-Boolean percolation $\mathcal{O}$ to the number of unit cubes that it intersects.

\smallskip
Let $S = (S_i)_{i \in \Z^d}$ denote the set of cubes $S_i = i +  [0, 1]^d $ which partition $\R^d$ up to boundaries (we call these `cubes' to distinguish them from the  `hypercubes' $C_i \subset \R^d \times \R_+$ that were introduced in Section \ref{s:entropic}). For a subset $D \subset \R^d$, possibly random, denote by $\mathcal{S}_D$ the collection of cubes that intersect $D$. 
 
 We introduce the collection $\mathfrak{O} = \{  \bigcup_{(x, r) \in \eta'} \{ x + B_r \} : \eta'\subset \eta \}$ of all possible subsets of balls which comprise the Poisson-Boolean percolation $\mathcal{O}$. In particular $\mathcal{C} \in \mathfrak{O}$.
 
\begin{proposition}
\label{p:vc1}
For every $\lambda_1 > 0$ there exists $\delta >0$ and a sequence of events $(G_n)_{n \in \N}$ such that, for every $\lambda \le \lambda_1$ and $n \in \N $,
\[ \P_\lambda[G_n] \ge 1-  e^{- \delta n} ,\]
and moreover on the event $G_n$, for every connected $\mathcal{O}' \in \mathfrak{O}$ with $|\mathcal{S}_{\mathcal{O}'}| \ge n$,
\begin{equation}
\label{e:vc1}
  \delta  |\mathcal{S}_{\mathcal{O}'}| \le  \Vol (\mathcal{O}')  \le     |\mathcal{S}_{\mathcal{O}'}|  . 
 \end{equation}
\end{proposition}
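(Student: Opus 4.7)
The upper bound in \eqref{e:vc1} is immediate since each cube $S_i$ has unit Lebesgue measure, so $\Vol(\mathcal{O}') \le |\mathcal{S}_{\mathcal{O}'}|$. For the lower bound I would introduce a notion of $\delta$-\emph{regular} cubes: $S_i$ is $\delta$-regular if every ball $(x,r) \in \eta$ that meets $S_i$ satisfies $\Vol((x+B_r) \cap S_i) \ge \delta$. Because every $\mathcal{O}' \in \mathfrak{O}$ is a union of balls of $\eta$, a $\delta$-regular cube in $\mathcal{S}_{\mathcal{O}'}$ contributes volume at least $\delta$ to $\mathcal{O}'$, and connectedness of $\mathcal{O}'$ ensures that $\mathcal{S}_{\mathcal{O}'}$ is a $*$-connected subset of $\Z^d$. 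The proof then reduces to showing that $G_n$, the event that every $*$-connected animal in $\Z^d$ of size $\ge n$ contains at least half $\delta$-regular cubes, has probability at least $1 - e^{-\delta n}$ (we then absorb the factor $1/2$ into $\delta$).

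I would first show that the probability a given cube $S_i$ is irregular is small, by splitting by the radius $r$ of the offending ball. For \emph{tiny} radii ($r \le r_1$ with $c_d r_1^d \le \delta$) the whole ball has volume below $\delta$, and the expected count of such balls intersecting $S_i$ is bounded by $C\lambda \mu((0, r_1])$, which is small for $r_1$ small (we may assume $\mu(\{0\}) = 0$ without loss of generality). For \emph{intermediate} radii ($r_1 \le r \le R_0$) the set of centres $x$ with $0 < \Vol((x+B_r) \cap S_i) < \delta$ has bounded volume, yielding an $O(\lambda \delta)$ contribution. For \emph{large} radii ($r > R_0$) a direct geometric estimate shows that the grazing region around $S_i$ is a thin shell of volume of order $\delta$, contributing $O(\lambda \delta \mu([R_0,\infty)))$. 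Hence $\P_\lambda[S_i \text{ irregular}]$ can be made arbitrarily small, uniformly for $\lambda \le \lambda_1$, by suitable choice of $r_1, R_0, \delta$.

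The final and most delicate step is to promote this pointwise estimate to the Peierls-type bound $\P_\lambda[G_n^c] \le e^{-\delta n}$. Enumerating $*$-connected animals of size $m$ containing a reference cube by the classical bound $C^m$, and combining with a binomial estimate on the irregular fraction, would give the desired decay at once \emph{if} the irregularity events were independent. They are not, because a single ball of large radius can make many cubes irregular simultaneously, introducing long-range dependence. My plan is to decompose the irregular events into two sources: \emph{local} irregularities coming from balls of radius at most $R_0$, which are $R_0$-range dependent and can be dominated by an i.i.d.\ Bernoulli site percolation of small parameter (via a Liggett--Schonmann--Stacey argument), handled by the standard Peierls bound; and \emph{long-range} irregularities coming from balls of radius above $R_0$, whose impact on any given animal is controlled by Poisson concentration together with the moment condition~\eqref{a:wk}. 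The long-range contribution is the principal technical obstacle, and is where the moment assumption~\eqref{a:wk} enters the argument essentially.
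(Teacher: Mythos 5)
Your upper bound and the deterministic step (each regular cube of $\mathcal{S}_{\mathcal{O}'}$ contributes volume $\delta$, connectedness of $\mathcal{O}'$ makes $\mathcal{S}_{\mathcal{O}'}$ a lattice animal, then an LSS-domination/Peierls argument for the finite-range part) are fine, but your definition of regularity is where the argument breaks. Because you demand that \emph{every} ball of $\eta$ meeting $S_i$ have overlap at least $\delta$ with $S_i$ itself, a single ball of very large radius whose boundary passes through a region makes irregular, in one stroke, a large connected family of cubes hugging that boundary sphere: near a point where the sphere is almost tangent to a lattice hyperplane, a whole cap containing on the order of $(\delta r)^{(d-1)/2}$ cubes is grazed with overlap less than $\delta$. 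Consequently $G_n^c$ contains the event that some ball of radius $r \gtrsim \delta^{-1} n^{2/(d-1)}$ is suitably placed, and under the standing assumption \eqref{a:wk} alone (take for instance $\mu([r,\infty)) \sim r^{-d-\varepsilon}$) this has probability decaying only polynomially in $n$, not like $e^{-\delta n}$. So no Peierls enumeration, Poisson concentration, or appeal to \eqref{a:wk} can rescue the long-range part: with your choice of $G_n$ the required bound $\P_\lambda[G_n] \ge 1-e^{-\delta n}$ is simply false for heavy-tailed admissible $\mu$, and this is precisely the obstacle you flag at the end without resolving.

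The paper avoids this by making goodness a property of \emph{small balls only}: $S_i$ is good if it meets no ball of radius at most $\varepsilon$, with $\varepsilon < 1/2$. This field is finite-range dependent whatever the tail of $\mu$, so Liggett--Schonmann--Stacey domination plus a lattice-animal estimate (the paper invokes Lee's greedy lattice animal theorem; your enumeration-plus-binomial Peierls bound would serve the same purpose) gives the exponential bound directly, with no long-range contribution to control. The volume is then recovered by a different geometric observation: any ball of radius larger than $\varepsilon$ that meets a good cube $S_i$ contains a ball of radius $\varepsilon$ meeting $S_i$, hence deposits volume at least $c_d \varepsilon^d$ inside the enlarged cube $S_i^+$; summing over good cubes and paying a factor $3^{-d}$ for overcounting yields $\Vol(\mathcal{O}') \ge c_d 3^{-d} \varepsilon^d \delta\, |\mathcal{S}_{\mathcal{O}'}|$. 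In short, you should not ask each ball to leave volume in the very cube it touches, only that the cube be untouched by tiny balls, and then collect the volume in the $3^d$-neighbourhood; this removes the large-ball dependence problem entirely.
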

The non-trivial content of Proposition \ref{p:vc1} is the lower bound; roughly speaking the intuition is that since $\mathcal{O}'$ is connected it is unlikely to contain too many balls of very small radius, which means that $\mathcal{O}'$ must have volume comparable to the number of cubes $|\mathcal{S}_{\mathcal{O}'}|$ that it intersects. Note that if the radius distribution $\mu$ were bounded away from zero then \eqref{e:vc1} would hold even without the event $G_n$, but we do not wish to assume this.

\smallskip
Before proceeding we deduce two corollaries which will be used in the next section. The first states that the density $\theta(\lambda)$ could equivalently be defined as $\theta(\lambda) = \P_\lambda[\Vol(\C) = \infty]$, and the second shows that the probabilities of the cluster exceedence and magnetisation events $\{ \Vol(\C) \ge y\}$ and $\{0 \longleftrightarrow \mathcal{G}\}$ are continuous functions of $\lambda$.

\begin{corollary}
\label{c:vc}
For every $\lambda > 0$, almost surely
\[ \{0 \longleftrightarrow \infty \} = \{ |\mathcal{S}_\C| = \infty \} =  \{\Vol(\C) = \infty \} . \]
Moreover, for every $y, \rho > 0$ the maps
\[ \lambda \mapsto \P_\lambda[ \Vol(\C) \ge y] \quad \text{and} \quad \lambda \mapsto \P_{\lambda, \rho}[0 \longleftrightarrow \mathcal{G}] \]
are continuous functions of $\R_+$.
\end{corollary}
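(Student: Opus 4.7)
The first statement is obtained by combining two deterministic inclusions with Proposition~\ref{p:vc1} and Borel--Cantelli. Deterministically, any unbounded subset of $\R^d$ intersects infinitely many unit cubes, giving $\{0 \longleftrightarrow \infty\} \subseteq \{|\mathcal{S}_\C| = \infty\}$, and any set of infinite Lebesgue measure is unbounded, giving $\{\Vol(\C) = \infty\} \subseteq \{0 \longleftrightarrow \infty\}$. For the remaining almost-sure inclusion $\{|\mathcal{S}_\C| = \infty\} \subseteq \{\Vol(\C) = \infty\}$ I would fix any $\lambda_1 > \lambda$ and apply Proposition~\ref{p:vc1} to $\mathcal{O}' = \C$: since $\sum_n \P_\lambda[G_n^c] \le \sum_n e^{-\delta n} < \infty$, the first Borel--Cantelli lemma ensures that $G_n$ holds for all $n$ large enough, almost surely; on $\{|\mathcal{S}_\C| = \infty\}$ one then has $\Vol(\C) \ge \delta|\mathcal{S}_\C| \ge \delta n$ for all such $n$, forcing $\Vol(\C) = \infty$.

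For the continuity of $\lambda \mapsto \P_\lambda[\Vol(\C) \ge y]$, the plan is to apply the entropic bound of Proposition~\ref{p:entropic} to a cluster-exploration algorithm $\mathcal{A}_y$ that locally determines $\{\Vol(\C) \ge y\}$. The algorithm performs a breadth-first exploration of $\C$, sequentially revealing enough hypercubes of $\R^d \times \R_+$ to enumerate all balls of $\mathcal{O}$ that intersect the currently-known portion of $\C$, while maintaining the Lebesgue volume $V$ of the explored balls; it terminates returning $1$ the first time $V \ge y$, and returning $0$ if the exploration queue ever empties. The first part of the corollary guarantees almost-sure termination on $\{\Vol(\C) \ge y\}$: either $\C$ is finite and is fully exhausted in finite time with $V$ reaching $\Vol(\C) \ge y$, or $\C$ is infinite in which case $\Vol(\C) = \infty$ and the threshold $y$ is necessarily crossed at some finite step. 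A parallel algorithm handles $\{0 \longleftrightarrow \mathcal{G}\}$ by first revealing $\mathcal{G}$ and then exploring $\C$ until a point of $\mathcal{G} \cap \C$ is discovered; when $\C$ is infinite the identity $\rho\Vol(\C) = \infty$ ensures $\mathcal{G} \cap \C \neq \emptyset$ a.s., so again termination holds on the event of interest.

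It then remains to show $\E_\lambda[\PVol(\mathcal{W}_{\mathcal{A}_y})] < \infty$ for each $\lambda > 0$, after which Proposition~\ref{p:entropic} immediately gives $|\P_{\lambda_1}[\Vol(\C) \ge y] - \P_{\lambda_2}[\Vol(\C) \ge y]| \to 0$ as $\lambda_2 \to \lambda_1$, and similarly for the magnetisation event via Proposition~\ref{p:entropic2}. To estimate this, I would observe that the hypercubes revealed when searching for balls intersecting a given ball $B(x_0, r_0)$ have total Poisson-Boolean volume $\lesssim \int (1 + r + r_0)^d\, d\mu(r) \lesssim (1 + r_0)^d$ by the moment condition \eqref{a:wk}; summing over the balls explored before termination and using that their union has Lebesgue volume at most $y$ (plus one last-added ball) yields the required finiteness. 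The main obstacle is exactly this Poisson-Boolean volume estimate, since the unbounded radius distribution allows each exploration step to inspect hypercubes involving arbitrarily large radii, and the interplay of \eqref{a:wk} with the volume cutoff $y$ is what keeps the total exploration cost integrable.
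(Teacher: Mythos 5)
Your treatment of the first assertion is fine and is essentially the paper's argument: Proposition~\ref{p:vc1} plus Borel--Cantelli, applied directly to $\C$ (the paper routes it through the truncated clusters $\C_R$ and the crossing events, but your direct application works since the comparison lemma covers $|\mathcal{S}_{\mathcal{O}'}|=\infty$), together with the trivial inclusions. For the continuity statement, however, your route (entropic bound of Proposition~\ref{p:entropic} applied to a cluster-exploration algorithm) is genuinely different from the paper's, which never uses the entropic machinery here: the paper approximates $\{\Vol(\C)\ge y\}$ and $\{0\longleftrightarrow\mathcal{G}\}$ by events measurable with respect to $\mathcal{O}_R$, a Poisson process of finite total intensity (whence continuity of the approximating probabilities), controls the approximation error uniformly in $\lambda$ by \eqref{e:vcp1}, and concludes by the uniform limit theorem. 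Your heavier route could in principle be made to work, but not with the estimate you propose.

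The gap is the claimed bound on $\E_\lambda[\PVol(\mathcal{W}_{\mathcal{A}_y})]$. You bound the revealed Poisson--Boolean volume by $\sum_i c(1+r_i)^d$ over the explored balls and then argue this is controlled because the union of those balls has Lebesgue volume at most $y$ (plus one ball). But the union's volume does not control the \emph{number} of explored balls: when $\mu$ charges arbitrarily small radii, a connected union of volume at most $y$ (even at most $1$) may consist of arbitrarily many tiny balls, and each explored ball forces a search region of Poisson--Boolean volume at least $c_d\int r^d\,d\mu>0$, so the sum admits no deterministic bound in terms of $y$; its finiteness in expectation is exactly what is at stake, in particular for supercritical $\lambda$. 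This small-radius pathology is precisely what the good-cube/lattice-animal Proposition~\ref{p:vc1} is built to handle, and it is how the paper bounds $\E_\lambda[\PVol(\mathcal{W}_{\mathcal{T}})]$ in Proposition~\ref{p:vc3}: outside the exponentially unlikely events $G_n^c$ one converts volume into the number of intersected cubes, and hence into the Poisson--Boolean volume of the cone above them (Lemma~\ref{l:vc2}). To close your argument you would have to import exactly that machinery rather than the bare statement ``union volume $\le y$''. A secondary, repairable issue: as described, each step of your exploration (``enumerate \emph{all} balls of $\mathcal{O}$ intersecting the currently known portion of $\C$'') would require revealing every hypercube meeting the cone above that portion, i.e.\ infinitely many hypercubes, so it is not an admissible step in the algorithm framework; one must interleave revelations as in the paper's template algorithm (reveal active hypercubes one at a time, ordered by sup-norm, terminating as soon as the already revealed balls witness the event), and likewise the ``return $0$ when the queue empties'' clause can never be certified in finite time, though the definition of locally determining an event does not require it.
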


\begin{proof}
 For subsets $X,Y,Z \subset \R^d$ define the connectivity relation
\[  \{X \stackrel{Z}{\longleftrightarrow} Y \} = \{ \text{there exists a path in $Z$ between $X$ and $Y$} \} .\]
For $R > 0$ define
\[ \mathcal{O}_R  =  \bigcup_{\substack{(x, r) \in \eta \text{ such that}\\ \{x + B_r\} \cap B_R  \neq \emptyset}} \{ x + B_r \}  \quad \text{and} \quad \mathcal{C}_R = \{ x \in \R^d :  0 \stackrel{\mathcal{O}_R}{\longleftrightarrow} x \} , \]
which are, respectively, the union of all balls in the Poisson-Boolean percolation which intersect $B_R$ and the cluster of the origin in this set. For later use we note that, due to the integrability \eqref{a:wk}, $\mathcal{O}_R$ is generated by a Poisson point process of finite total intensity, and hence $\lambda \mapsto \P_{\lambda}[A]$ is continuous for any event $A$ measurable with respect to $\mathcal{O}_R$.
 
Let $\lambda_1 > 0$ be given, and let $\delta >0$ and $(G_n)_{n \in \N}$ be defined as in the statement of Proposition~\ref{p:vc1}. Notice that the event $\{0 \stackrel{\C_R}{\longleftrightarrow} \partial B_R\}$ implies that $|\mathcal{S}_{\C_R}| \ge \lceil R/2 \rceil$. Applying Proposition \ref{p:vc1} to $\C_R \in \mathfrak{O}$ we deduce that, for all $\lambda \le \lambda_1$ and $R > 0$,
 \begin{equation}
     \label{e:vcp1}
  \P_\lambda[ \{0 \stackrel{\C_R}{\longleftrightarrow} \partial B_R\}   \setminus \{ \Vol(\C_R) \ge \delta  R / 2  \} ] \le 1 - \P_\lambda[G_{\lceil R/2 \rceil}] \le e^{-\delta R / 2} .   \end{equation}
  We will show that all statements in Corollary \ref{c:vc} follow from \eqref{e:vcp1}.

For the first statement, by \eqref{e:vcp1} and the Borel-Cantelli lemma there almost surely exists an $R_0 > 0$ such that, for all natural numbers $R \ge R_0$,
\[  \{0 \stackrel{\C_R}{\longleftrightarrow} \partial B_R\}   \subset  \{ \Vol(\C_R) \ge \delta  R / 2  \}  . \]
 Hence almost surely
\[  \{0 \longleftrightarrow \infty \} = \cap_{R \ge R_0}   \{0 \stackrel{\C_R}{\longleftrightarrow} \partial B_R\}   \subset \cap_{R \ge R_0 }  \{ \Vol(\C_R) \ge \delta   R / 2  \}= \{ \Vol(\C) = \infty \} ,\] 
 which completes the proof since trivially $\{ \Vol(\C) = \infty \} \subset \{ |\mathcal{S}_\C| = \infty \} = \{0 \longleftrightarrow \infty \}$.

For the second statement, we proceed by local approximation of the events $\{ \Vol(\C) \ge y \}$ and $\{ 0 \longleftrightarrow \mathcal{G}\}$. Let $y , \rho> 0$ be given. We claim that, by \eqref{e:vcp1}, there exists a $c = c(\lambda_1, y, \rho) > 0$ such that, for every $\lambda \le \lambda_1$ and $R \ge 1$,
 \begin{equation}
     \label{e:cor1}
\P_{\lambda}[\{0 \stackrel{\C_R}{\longleftrightarrow} \partial B_R\} \setminus \{Vol(\C_R) \ge y  \} ] \le  e^{- c R}      
 \end{equation} 
 and
 \begin{equation}
     \label{e:cor2}
     \P_{\lambda,\rho}[  \{0 \stackrel{\C_R}{\longleftrightarrow} \partial B_R\} \setminus \{0 \stackrel{\C_R}{\longleftrightarrow} \mathcal{G}\}   ] \le  e^{-c  R}     . 
 \end{equation}
 Indeed \eqref{e:cor1} follows directly from \eqref{e:vcp1} by taking $R$ sufficiently large such that $\delta  R / 2  \ge y$ and adjusting constants. For \eqref{e:cor2}, we have by \eqref{e:vcp1}
 \[  \P_{\lambda,\rho}[ \{0 \stackrel{\C_R}{\longleftrightarrow} \partial B_R\} \setminus  \{0 \stackrel{\C_R}{\longleftrightarrow} \mathcal{G}\}   ] \le \P_{\lambda,\rho}[ \{ \Vol(\C_R) \ge \delta  R / 2  \} \setminus  \{0 \stackrel{\C_R}{\longleftrightarrow} \mathcal{G}\}    ] + e^{-\delta R / 2} .\]
 On the other hand, by conditioning on $\mathcal{O}$ and using the independence of the ghost field $\mathcal{G}$,
\begin{align*}
\P_{\lambda,\rho}[ \{ \Vol(\C_R) \ge  \delta  R / 2  \} \setminus \{0 \stackrel{\C_R}{\longleftrightarrow} \mathcal{G}\} ]  & = \E_\lambda \big[    \id_{\Vol(\C_R) \ge  \delta R / 2}    \E_{\lambda,\rho}[  \{ |\C_R \cap \mathcal{G}| = 0  \}  \, | \, \mathcal{O} ]    \big]   \\
& \le  \E_\lambda \big[    \id_{\Vol(\C_R) \ge \delta R / 2}   e^{-\rho \Vol(\C_R) }   \big]   \le e^{-\rho \delta R / 2}  ,
\end{align*}
which gives the result.

Given \eqref{e:cor1}, the continuity of $\lambda \mapsto \P_\lambda[ \Vol(C) \ge y]$ follows by noticing that $\{\Vol(\C_R) \ge y\} \subset \{\Vol(\C) \ge y\}$ and
\[ \{\Vol(\C) \ge y\} \setminus \{\Vol(\C_R) \ge y\}  \subset \{0 \stackrel{\C_R}{\longleftrightarrow} \partial B_R\}   \setminus \{\Vol(\C_R) \ge y\}.  \]
Together with \eqref{e:cor1} we deduce that, for $\lambda \le \lambda_1$,
\[ \P_{\lambda}[\Vol(\C) \ge y] \ge \P_{\lambda}[\Vol(\C_R) \ge y ] \ge  \P_{\lambda}[\Vol(\C) \ge y] -  e^{-c  R} , \]
and since $\lambda \mapsto \P_\lambda[ \Vol(\C_R) \ge y]$ is continuous (recall that the event $\{\Vol(\C_R) \ge y\}$ is measurable with respect to $\mathcal{O}_R$), the desired continuity follows from the uniform limit theorem by taking $R \to \infty$. 

Similarly, since $\{0  \stackrel{\C_R}{\longleftrightarrow} \mathcal{G} \} \subset \{0 \longleftrightarrow \mathcal{G}\}$ and
\[ \{0 \longleftrightarrow \mathcal{G}\} \setminus \{0  \stackrel{\C_R}{\longleftrightarrow} \mathcal{G} \}  \subset  \{0 \stackrel{\C_R}{\longleftrightarrow} \partial B_R\}    \setminus \{0  \stackrel{\C_R}{\longleftrightarrow} \mathcal{G} \}, \]
by \eqref{e:cor2} we deduce that
\[ \P_{\lambda,\rho}[0 \longleftrightarrow \mathcal{G}] \ge \P_{\lambda,\rho}[0  \stackrel{\C_R}{\longleftrightarrow} \mathcal{G}] \ge  \P_{\lambda,\rho}[0 \longleftrightarrow \mathcal{G}] -  e^{-c  R}  \]
and we again deduce the continuity of $\lambda \mapsto \P_{\lambda, \rho}[0 \longleftrightarrow \mathcal{G}]$ by taking $R \to \infty$.
\end{proof}

We move on to the proof of Proposition \ref{p:vc1}. Since the upper bound is trivial, it remains to prove the lower bound. 

\smallskip
A collection of cubes $\mathcal{S} \subset S$ is \textit{connected} if its index set in $\Z^d$ is connected (i.e.\ connectivity is induced by cubes sharing a $(d-1)$-dimensional face rather than intersection); a connected $\mathcal{S}$ is an \textit{animal} if it contains $S_0 = [0,1]^d$. For a cube $S_i$ denote by $S_i^+$ its union with its neighbours.

\smallskip 
The events $(G_n)_{n \in \N}$ in Proposition \ref{p:vc1} will be defined by introducing a notion of `good' cube. For $\varepsilon > 0$ and a cube $S_i \in S$, define $G_\varepsilon(S_i)$ to be the event that $S_i$ does not intersect any ball in $\mathcal{O}$ with radius less than $\varepsilon$, i.e.\ 
\[ G_\varepsilon(S_i) = \Big\{ S_i \cap \Big( \cup_{(x, r) \in \eta, r \le \varepsilon} \{x + B_r \} \Big) = \emptyset \Big\}. \]
For $\varepsilon, \delta>0$ and a finite collection $\mathcal{S} = (S_{i_j})_{j = 1,\ldots, k}$ of cubes, let $G_{\varepsilon,\delta}(\mathcal{S})$ denote the event that at least $\delta |\mathcal{S}|$ of the cubes in $\mathcal{S}$ verify $G_\varepsilon(S_{i_j})$. Finally, for $n\in\N$ define the event $G_{n,\varepsilon,\delta}$ that $G_{\varepsilon,\delta}(\mathcal{S})$ is satisfied for every animal $\mathcal{S}$ with $n \le |\mathcal{S}| < \infty$.

\begin{lemma}
\label{l:comparison1}
Fix $\varepsilon \in (0,1/2)$, $\delta > 0$, and $n \in \N$, and let $c_d > 0$ denote the volume of the unit ball in $\R^d$. On the event $G_{n, \varepsilon, \delta}$, for every connected $\mathcal{O}' \in \mathfrak{O}$ with $|\mathcal{S}_{\mathcal{O}'}| \ge n$,
\begin{equation}
\label{e:comp1}  \Vol( \mathcal{O}' )  \ge c_d 3^{-d} \varepsilon^d \delta  |\mathcal{S}_{ \mathcal{O}'} |  .
\end{equation}
If $ |\mathcal{S}_{ \mathcal{O}' }  | = \infty$, \eqref{e:comp1} is interpreted as $  \Vol( \mathcal{O}' )   = \infty$.
\end{lemma}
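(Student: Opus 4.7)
The plan is to associate to each \emph{good} cube in $\mathcal{S}_{\mathcal{O}'}$ a region inside $\mathcal{O}' \cap S_i^+$ of volume on the order of $c_d \varepsilon^d$, and then combine these contributions using a bounded-overlap argument for the $3 \times \cdots \times 3$ neighborhoods $\{S_j^+\}_j$.

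First I would note that $\mathcal{S}_{\mathcal{O}'}$ is face-connected: any single ball from $\eta$ intersects a face-connected block of cubes (by convexity and an open-neighborhood argument along segments joining two intersected cubes), two intersecting balls share at least one cube, and this face-connectedness propagates over chains of balls comprising a connected $\mathcal{O}'$. Consequently, on $G_{n,\varepsilon,\delta}$ (possibly appealing to translation invariance so that the event is stated with respect to the appropriate base cube rather than $S_0$), at least $\delta |\mathcal{S}_{\mathcal{O}'}|$ of the cubes in $\mathcal{S}_{\mathcal{O}'}$ are good. For each such cube $S_i$, I fix a witness $p_i \in \mathcal{O}' \cap S_i$; since $\mathcal{O}'$ is a union of balls from $\eta$, the point $p_i$ lies in some ball $B(x_i, r_i) \subset \mathcal{O}'$, and goodness of $S_i$ forces $r_i > \varepsilon$.

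The geometric heart of the argument is the pointwise lower bound
\[ \Vol\bigl(\mathcal{O}' \cap S_i^+\bigr) \;\ge\; \Vol\bigl(B(p_i, \varepsilon) \cap B(x_i, r_i)\bigr) \;\ge\; c \, c_d \, \varepsilon^d, \]
for a universal constant $c = c(d) > 0$. The first inclusion uses that $p_i \in S_i$ and $\varepsilon < 1$ imply $B(p_i, \varepsilon) \subset S_i^+$. The second exploits $p_i \in B(x_i, r_i)$ with $r_i > \varepsilon$ via casework on $|p_i - x_i|$: either $|p_i - x_i| \le \varepsilon/2$, so that $B(p_i, \varepsilon/2)$ is contained in both balls outright, or one places a ball of radius $\varepsilon/2$ at the point $p_i + (\varepsilon/2)(x_i-p_i)/|x_i-p_i|$ along the segment from $p_i$ to $x_i$ and verifies via the triangle inequality that this translated ball sits inside both $B(p_i, \varepsilon)$ and $B(x_i, r_i)$.

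Finally, summing the pointwise bound over the $\ge \delta |\mathcal{S}_{\mathcal{O}'}|$ good cubes and using that each point of $\R^d$ lies in at most $3^d$ of the translates $\{S_j^+\}_j$ yields $3^d \Vol(\mathcal{O}') \ge c \, c_d \, \varepsilon^d \, \delta \, |\mathcal{S}_{\mathcal{O}'}|$, which rearranges to the claimed bound (up to adjustment of the universal constant absorbed into $c$). The infinite-cardinality case follows by applying the finite bound to an exhausting sequence of finite connected subcollections of $\mathcal{O}'$. The main obstacle is the pointwise geometric estimate, where care is needed because $p_i$ may sit arbitrarily close to the boundary of $B(x_i, r_i)$ and $r_i$ may be only marginally larger than $\varepsilon$; the rest is standard bounded-overlap bookkeeping.
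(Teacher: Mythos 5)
Your proof is correct and follows essentially the same route as the paper: on $G_{n,\varepsilon,\delta}$ at least $\delta|\mathcal{S}_{\mathcal{O}'}|$ cubes of the (face-connected) animal $\mathcal{S}_{\mathcal{O}'}$ are good, each good cube forces a volume of order $c_d\varepsilon^d$ of $\mathcal{O}'$ inside $S_i^+$, and the $3^d$-fold bounded overlap of the $S_i^+$ plus an exhaustion argument for $|\mathcal{S}_{\mathcal{O}'}|=\infty$ finish the proof. The one quantitative discrepancy is in the geometric step: your lens bound via a ball of radius $\varepsilon/2$ yields the constant $2^{-d}c_d 3^{-d}\varepsilon^d\delta$ rather than the stated $c_d3^{-d}\varepsilon^d\delta$; to get the exact constant, place a full radius-$\varepsilon$ ball inside $B(x_i,r_i)$ along the segment from $p_i$ toward $x_i$ (as in the paper) and note that it lies within distance $2\varepsilon<1$ of $S_i$, hence in $S_i^+$ --- this is precisely where $\varepsilon<1/2$ is used (your variant only needs $\varepsilon<1$, which is the sign of the lost factor). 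Since the lemma is only ever invoked through the existence of some positive constant, this loss is harmless, and your aside about the anchoring of animals at $S_0$ flags an imprecision already present in the paper's own proof (resolved by the fact that the lemma is applied to clusters containing the origin), so no appeal to translation invariance is needed.
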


\begin{proof}
First consider the case that $n \le |\mathcal{S}_{\mathcal{O}'}| < \infty$. We use the simple geometric fact that if $\varepsilon \in (0,1/2)$ then any ball of radius larger than $\varepsilon$ that intersects a cube $S_i$ has volume at least $ c_d\varepsilon^d$ contained in $S^+_i$ (see Figure \ref{f:neigh}). 
\begin{figure}
\centering
\includegraphics[height=5cm]{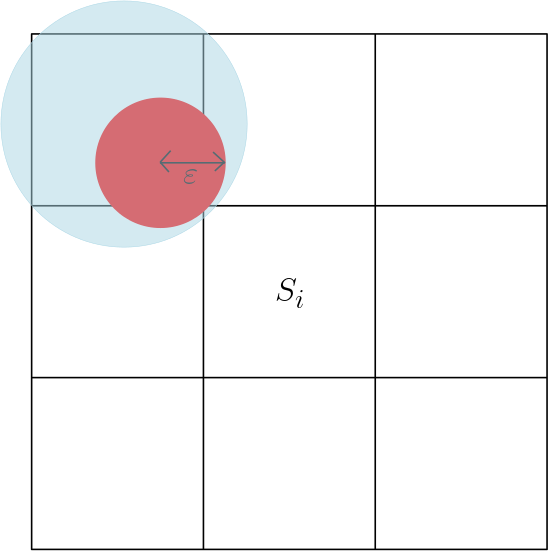}
\caption{An illustration of the geometric fact used in the proof of Lemma~\ref{l:comparison1}. Every ball of radius $\varepsilon < 1/2$ that intersects $S_i$ (in red) is contained in $S^+_i$, and any ball of larger radius intersecting $S_i$ (in light blue) contains such a ball.}
\label{f:neigh}
\end{figure}
Now since $\mathcal{O}'$ is connected, $\mathcal{S}_{\mathcal{O}'}$ is an animal, so the event $G_{n,\varepsilon,\delta}$ implies that there exist at least $\delta  |\mathcal{S}_{\mathcal{O}'} |$ cubes in $\mathcal{S}_{\mathcal{O}'}$ which only intersect balls in $\mathcal{O}$ of radius larger than $\varepsilon$. For each such cube $S_i$, using the geometric fact mentioned above we have
\[  \Vol( S_i^+ \cap \mathcal{O}' ) \ge c_d\varepsilon^d. \]
Summing over $S_i \in \mathcal{S}_{\mathcal{O}'}$, and since each cube of $\mathcal{S}_{\mathcal{O}'}$ contributes to the sum at most $3^d$ times, we have 
\[ 3^d \Vol( \mathcal{O}' ) \ge \delta |\mathcal{S}_{\mathcal{O}'}| c_d \varepsilon^d  . \]
as required. On the other hand, if $|\mathcal{S}_{\mathcal{O}'}| = \infty$ then for any $n' \in \N$ we can find a subset $\mathcal{O}'' \subset \mathcal{O}'$ such that $\mathcal{O}'' \in \mathfrak{O}$ and $\mathcal{S}_{\mathcal{O}''}$ is an animal with $|\mathcal{S}_{\mathcal{O}''}| \ge n'$. Taking $n' \to \infty$ and using the same argument as before we deduce that 
\[ \Vol( \mathcal{O}' )  \ge  \Vol(  \mathcal{O}''  )  \ge  c_d 3^{-d} \varepsilon^d \delta n' \to  \infty  \]
as required.
\end{proof}

We next argue that, for small enough $\varepsilon,\delta > 0$, the event $G_{n,\varepsilon,\delta}$ occurs with overwhelming probability:

\begin{lemma}
\label{l:comparison2}
For every $\lambda_1>0$ there exists a $\delta_0 >0$, such that for any $\lambda \le \lambda_1$ and $n \in \N$,
\[  \P_\lambda[G_{n,\delta_0,\delta_0}] \ge 1-  e^{-  \delta_0 n}. \]
\end{lemma}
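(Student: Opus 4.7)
\textbf{Proof plan for Lemma \ref{l:comparison2}.}

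The strategy is a Peierls-type union bound over animals: combine a Chernoff estimate for the density of ``bad'' cubes within a fixed animal with the standard exponential bound on the number of animals of a given size.

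First, I would control the per-cube failure probability. Since radius-zero balls in $\mathcal{O}$ are isolated singletons that do not affect its connected structure, one may assume without loss of generality that $\mu(\{0\}) = 0$. A Poisson void computation then gives
\[ p(\varepsilon) := \sup_{\lambda \le \lambda_1} \P_\lambda[G_\varepsilon(S_i)^c] \le 1 - \exp\Bigl(-\lambda_1 \int_0^\varepsilon (1+2r)^d \, d\mu(r)\Bigr), \]
which tends to $0$ as $\varepsilon \downarrow 0$ by dominated convergence together with \eqref{a:wk}. The essential structural remark is that for $\varepsilon \le 1/2$ the event $G_\varepsilon(S_i)$ is measurable with respect to $\eta$ restricted to $S_i^+ \times [0,1]$; hence if $\Z^d$ is partitioned into the $3^d$ residue classes modulo $3$, the events $\{G_\varepsilon(S_i)\}$ along any single class are i.i.d.\ Bernoulli$(1-p(\varepsilon))$, because within such a class any two indices $i,j$ satisfy $|i-j|_\infty \ge 3$ and therefore $S_i^+ \cap S_j^+ = \emptyset$.

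Next, fix an animal $\mathcal{S}$ with $|\mathcal{S}| = k$. Pigeonholing over the $3^d$ residue classes extracts a sub-collection $\mathcal{S}_0 \subset \mathcal{S}$ with $m := |\mathcal{S}_0| \ge k/3^d$ on which the events are i.i.d. If $G_{\varepsilon,\delta}(\mathcal{S})$ fails then at least $(1-\delta)k$ cubes of $\mathcal{S}$ are bad, so taking $\delta \le 1/(2 \cdot 3^d)$ forces at least $m/2$ bad cubes inside $\mathcal{S}_0$. A crude binomial estimate (assuming $p(\varepsilon) \le 1/4$) gives
\[ \P_\lambda[G_{\varepsilon,\delta}(\mathcal{S})^c] \le 2^m\, p(\varepsilon)^{m/2} \le (4\, p(\varepsilon))^{k/(2 \cdot 3^d)}. \]
Combined with the standard bound that the number of animals of size $k$ containing $S_0$ is at most $\gamma^k$ for some $\gamma = \gamma(d) < \infty$ (Klarner-type estimate), a union bound yields
\[ \P_\lambda[G_{n,\varepsilon,\delta}^c] \le \sum_{k \ge n} \gamma^k\, (4\, p(\varepsilon))^{k/(2 \cdot 3^d)}. \]

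Finally, I would choose $\varepsilon_0 \in (0,1/2)$ so small that $\gamma\, (4\, p(\varepsilon_0))^{1/(2 \cdot 3^d)} \le e^{-1}$ (possible by the first paragraph), which bounds the sum above by $2 e^{-n}$. Setting $\delta_0 := \min\{\varepsilon_0,\, 1/(2 \cdot 3^d),\, 1/4\}$ and using the monotonicity $G_{\varepsilon_0}(S_i) \subset G_{\delta_0}(S_i)$ (valid since $\delta_0 \le \varepsilon_0$), this gives $\P_\lambda[G_{n,\delta_0,\delta_0}^c] \le 2 e^{-n} \le e^{-\delta_0 n}$ for all $n \ge 1$ after a routine check of constants. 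The only delicate point is ensuring the Chernoff decay rate dominates the animal growth rate $\log \gamma$; this is precisely where the smallness of $p(\varepsilon)$ for small $\varepsilon$, enabled by the reduction to $\mu(\{0\}) = 0$ and the integrability hypothesis \eqref{a:wk}, is essential.
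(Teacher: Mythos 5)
Your proof is correct, but it takes a genuinely different route from the paper. The paper does not do a Peierls argument by hand: it sets $Y^\varepsilon_i$ to be the indicator that $S_i^+$ meets no small ball, checks that $\P_\lambda[Y^\varepsilon_i=0]<p_c(d)$ for small $\varepsilon$ uniformly in $\lambda\le\lambda_1$, invokes the Liggett--Schonmann--Stacey theorem to dominate the finite-range-dependent field $(Y^\varepsilon_i)$ from below by an i.i.d.\ Bernoulli field that is supercritical for site percolation, and then applies Lee's greedy lattice animal theorem to that i.i.d.\ field, finally summing over animal sizes $m\ge n$. You instead replace both black boxes by elementary ingredients: decoupling via the $3^d$ residue classes mod $3$ (using that for $\varepsilon\le 1/2$ the event $G_\varepsilon(S_i)$ is measurable with respect to $\eta$ on a neighbourhood of $S_i$, so the events along one class are independent), a pigeonhole to extract a proportion $3^{-d}$ of any animal inside one class, a binomial/Chernoff bound $2^m p(\varepsilon)^{m/2}$, and the standard $\gamma^k$ bound on the number of animals of size $k$ containing $S_0$; your final choice of constants and the comparison $2e^{-n}\le e^{-\delta_0 n}$ check out. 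The trade-off: the paper only needs the per-cube failure probability below the fixed threshold $p_c(d)$ and outsources all combinatorics to cited results, while you need it below the smaller, dimension-dependent threshold $\tfrac14(e^{-1}/\gamma)^{2\cdot 3^d}$ --- harmless here since $p(\varepsilon)\to 0$ --- in exchange for a self-contained argument with explicit constants. One caveat: your ``WLOG $\mu(\{0\})=0$'' is not literally a reduction for the events $G_{n,\delta_0,\delta_0}$ as defined, since deleting radius-zero points changes $G_\varepsilon(S_i)$ itself; the cleaner fix is to read $G_\varepsilon(S_i)$ as concerning radii in $(0,\varepsilon]$ (radius-zero balls affect neither connectivity nor volume, so nothing downstream changes). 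The same degenerate-atom issue is implicit in the paper's proof, where $\mu(\{0\})$ large would prevent $\P_\lambda[Y^\varepsilon_i=0]$ from dropping below $p_c(d)$, so this is not a defect specific to your argument.
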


The proof will make use of a result of Lee on lattice animals. Abusing notation slightly, we will call a subset $\xi \subset \mathbb{Z}^d$ an \textit{animal} if it is connected in $\Z^d$ and contains the origin.

\begin{proposition}[{\cite[Theorem 5]{lee93}}]
\label{lee}
Let $(Y_v)_{v\in\Z^d}$ be i.i.d non-negative random variables with $\P[Y_0=0]<p_c(d)$, where $p_c(d) > 0$ is the critical parameter of Bernoulli site percolation on~$\Z^d$. Then there exist $\delta_0,c>0$ such that, for every $n\in\N$,
\[ \P\Big[\text{there exists an animal $\xi \subset \Z^d$ such that $|\xi|\geq n$ and $\sum\limits_{v\in\xi} Y_v
\leq \delta_0 n$}\Big]  \le 4 e^{-cn}.  \]
\end{proposition}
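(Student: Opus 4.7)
The strategy is to transfer the weighted-animal problem to a question about subcritical Bernoulli site percolation on $\Z^d$ via truncation, and then bridge the remaining gap with a sprinkling argument.

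First I would truncate the weights. Since $\P[Y_0=0]<p_c(d)$ and the events $\{Y_0<\varepsilon\}$ decrease to $\{Y_0=0\}$ as $\varepsilon\downarrow 0$, we can choose $\varepsilon>0$ such that $q:=\P[Y_0<\varepsilon]<p_c(d)$. Declare a vertex $v$ to be \emph{open} if $Y_v<\varepsilon$; this realises subcritical Bernoulli site percolation at density $q$. If $\xi$ is any animal with $|\xi|\geq n$ and $\sum_{v\in\xi}Y_v\leq\delta_0 n$, then the set $B(\xi)$ of closed vertices in $\xi$ satisfies $\varepsilon|B(\xi)|\leq\sum_{v\in\xi}Y_v\leq\delta_0 n$, so $|B(\xi)|\leq\delta_0 n/\varepsilon$. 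Thus every witnessing animal is \emph{almost entirely open}.

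Next I would sprinkle. Choose $r\in(0,1)$ small enough that $q':=q+(1-q)r$ is still below $p_c(d)$, and introduce i.i.d.\ Bernoulli$(r)$ variables $(U_v)_{v\in\Z^d}$ independent of $(Y_v)$. Declare $v$ to be \emph{$q'$-open} if $v$ is open or $U_v=1$; by independence these indicators are i.i.d.\ Bernoulli$(q')$. By the Aizenman--Barsky--Menshikov theorem applied to the $q'$-open cluster $C'(0)$ of the origin,
\[ \P\bigl[|C'(0)|\geq n\bigr]\leq e^{-\alpha n} \]
for some $\alpha=\alpha(q')>0$.

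Finally I would couple the target event with the subcritical cluster event. Let $E_n$ be the event in the statement, and on $E_n$ let $\xi^\star=\xi^\star(Y)$ be the lexicographically first witnessing animal, a measurable function of $Y$. On $E_n\cap F$, where $F:=\{U_v=1\text{ for every }v\in B(\xi^\star)\}$, every vertex of $\xi^\star$ is $q'$-open, so $\xi^\star\subset C'(0)$ and in particular $|C'(0)|\geq n$. Since $(U_v)$ is independent of $Y$ and $|B(\xi^\star)|\leq\delta_0 n/\varepsilon$ on $E_n$, conditioning on $Y$ gives
\[ e^{-\alpha n}\geq\P\bigl[|C'(0)|\geq n\bigr]\geq\P[E_n\cap F]=\E\bigl[\id_{E_n}\,r^{|B(\xi^\star)|}\bigr]\geq r^{\delta_0 n/\varepsilon}\,\P[E_n], \]
so $\P[E_n]\leq\exp\bigl(-\alpha n+(\delta_0/\varepsilon)\log(1/r)\,n\bigr)$. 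Choosing $\delta_0>0$ small enough that $(\delta_0/\varepsilon)\log(1/r)<\alpha$ yields exponential decay $\P[E_n]\leq e^{-cn}$ for some $c>0$; the prefactor $4$ (or any constant) is absorbed into $c$ by reducing $c$ to control finitely many small values of $n$.

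The main obstacle is that a naive union bound over animals combined with a Chernoff estimate on $\sum_{v\in\xi}Y_v$ would need $\lambda_d\cdot\E[e^{-tY_0}]<1$ for some $t>0$, where $\lambda_d$ is the lattice-animal growth rate, and this may fail since there is no general relation between $\lambda_d$ and $1/\P[Y_0=0]$. The sprinkling argument bypasses this by upgrading a near-open animal into a fully open one at a controlled entropic cost, exporting the animal-counting work into the already sharp subcritical cluster-size estimate.
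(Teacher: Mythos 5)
The paper does not actually prove this proposition: it is imported verbatim from Lee \cite[Theorem 5]{lee93}, so there is no in-paper argument to compare against. Your proof is correct and self-contained, and the route is the natural one: truncate to get a subcritical site-percolation field ($q=\P[Y_0<\eps]<p_c(d)$), observe that a witnessing animal has at most $\delta_0 n/\eps$ closed sites, and then use sprinkling to upgrade such a near-open animal to a fully $q'$-open one at entropic cost $r^{-\delta_0 n/\eps}$, which is beaten by the exponential decay of the subcritical cluster volume once $\delta_0$ is small. The order of choice of constants ($\eps$, then $r$ and $\alpha$, then $\delta_0$) is consistent, and the conditional computation $\P[F\mid Y]=r^{|B(\xi^\star)|}$ is legitimate since $\xi^\star$ is a function of $Y$ alone. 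Two minor points to tighten. First, to make the ``lexicographically first witnessing animal'' well-defined and measurable you should reduce to animals of size exactly $n$: by non-negativity of the weights, any animal with $|\xi|\ge n$ and $\sum_{v\in\xi}Y_v\le\delta_0 n$ contains a connected subset of size exactly $n$ through the origin with the same weight bound, and there are only finitely many such animals. Second, the estimate $\P[|C'(0)|\ge n]\le e^{-\alpha n}$ concerns the cluster \emph{volume}, which is not literally the Aizenman--Barsky/Menshikov radius bound but the standard subcritical volume estimate (e.g.\ Grimmett, \emph{Percolation}, Theorem 6.78); it does hold for all $p<p_c$, so the argument stands. Your closing remark correctly identifies why a naive union bound over lattice animals combined with a Chernoff bound would require the much stronger hypothesis that $\P[Y_0=0]$ be smaller than the reciprocal of the animal growth constant, rather than merely smaller than $p_c(d)$; the sprinkling step is precisely what removes that obstruction.
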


\begin{proof}[Proof of Lemma \ref{l:comparison2}]
For $i \in\Z^d$ and $\varepsilon>0$ define the random variable $Y^\varepsilon_i$ to be the indicator of the event that $S^+_i$ does not intersect $\{(x,r) \in \eta : r \le \varepsilon\}$. Now for every $\lambda > 0$, $i \in \Z^d$, and $\varepsilon > 0$, we have
\[   \P_\lambda [Y^\varepsilon_i=0] = \P[ \textrm{Pois}( 3^d \lambda  \mu( [0,\varepsilon] ) ) \ge  1 ]  =  1 - e^{ -  3^d \lambda  \mu( [0,\varepsilon] ) } , \]
and so for $\varepsilon  = \varepsilon(\lambda_1)> 0$ sufficiently small,
\begin{equation}
\label{smallparam}
\P_\lambda[Y^{\varepsilon}_i=0] <p_c(d) 
\end{equation}
for every $\lambda \le \lambda_1$ and $i \in \Z^d$. 

We cannot directly apply Proposition \ref{lee} to the random variables $(Y^{\varepsilon}_i)_{i \in \Z^d}$ since they are not independent. Nevertheless, they are finite-range dependent since $Y^\varepsilon_i$ and $Y^\varepsilon_j$ are independent unless $S_i$ and $S_j$ have a common neighbour, and so by a classical result of Liggett, Schonmann and Stacey \cite[Theorem 0.0]{lss97}, for sufficiently small $\varepsilon = \varepsilon(\lambda_1) > 0$ there exists a family $(Z_i)_{i\in\Z^d}$ of i.i.d.\ Bernoulli random variables which is stochastically dominated by $(Y^\varepsilon_i)_{i \in \Z^d}$ for every $\lambda \le \lambda_1$ and also verifies $\P[Z_i=0] <p_c(d)$ for every $i \in \Z^d$.

Applying Proposition \ref{lee} to $(Z_i)_{i \in \Z^d}$ yields constants $\delta_0,c>0$, depending only on $\mu$ and $\varepsilon$, such that, for every $\lambda \le \lambda_1$ and $n \in \N$,
\begin{align*}
\begin{split}
&\P_\lambda\Big[\text{there exists an animal $\mathcal{S} = (S_i)$ with $n \le |\mathcal{S}| < \infty$ such that}
\\
&\quad \qquad \qquad \text{at most $\delta_0 n$ of the cubes in $\mathcal{S}$ verify $\{Z_i=1\}$}\Big] \leq 4e^{-cn}.
\end{split}
\end{align*}
By stochastic domination, the same is true for $Y_i^\varepsilon$ replacing $Z_i$. Since for $\varepsilon < 1/2$ we have
\[ Y^\varepsilon_i  = 1 \quad \Longrightarrow \quad G_{\varepsilon}(S_i) \text{ holds} , \]
we deduce that
\begin{align*}
& \P_\lambda\Big[\text{for every animal $\mathcal{S} = (S_i)$ with $n \le |\mathcal{S}| < \infty$ at least $\delta_0 n$ of the cubes verify $G_{\varepsilon}(S_i)$}\Big] \\
& \qquad \ge 1- 4e^{-cn} . 
\end{align*}
Since 
\[  \bigcap_{m \ge n}  \Big\{ \text{for every animal $\mathcal{S} = (S_i)$ with $ |\mathcal{S}| = m $ at least $\delta_0 m$ of the cubes verify $G_{\varepsilon}(S_i)$} \Big\} \]
is contained in $G_{n, \varepsilon, \delta_0}$, this implies that, for every $\lambda \le \lambda_1$ and $n\in\N$, 
\[ \P_\lambda[ G_{n, \varepsilon, \delta_0} ]   \ge 1 - 4\sum\limits_{m\geq n}e^{-cm}=1 - 4e^{-cn}\frac{1}{1-e^{-c}} \]
and we deduce the result by adjusting constants.
\end{proof}

\begin{proof}[Proof of Proposition \ref{p:vc1}]
Let $\lambda_1 > 0$ be given and let $\delta_0 > 0$ be the constant appearing in Lemma \ref{l:comparison2}. Then by combining Lemmas \ref{l:comparison1} and \ref{l:comparison2} the result follows for the constant $\delta =  \min\{ c_d 3^{-d} \delta_0^{d+1}, \delta_0\}$ and the events $G_n = G_{n, \delta_0, \delta_0}$. 
\end{proof}

\subsection{From the number of intersecting cubes to the Poisson-Boolean volume}

The next lemma shows that, up to multiplicative constants, we may neglect the extra dimension of Poisson-Boolean space.

\smallskip
For a point $x \in \R^d$, we define the \emph{cone above $x$} to be the set defined by
\[\cone(x) =  \bigcup\limits_{y\geq 0}(B_y+x,y) = \{(x',y)\in\R^d\times\R_+ : \ \dist( x',x) \le y\}     \subseteq\R^d\times\R_+  , \]
and for $D \subset \R^d$, define $\cone(D) = \bigcup\limits_{x\in D}\cone(x)$ to be the \textit{cone above $D$} (see Figure \ref{f:cones}).
\begin{figure}
\centering
\includegraphics[height=6cm]{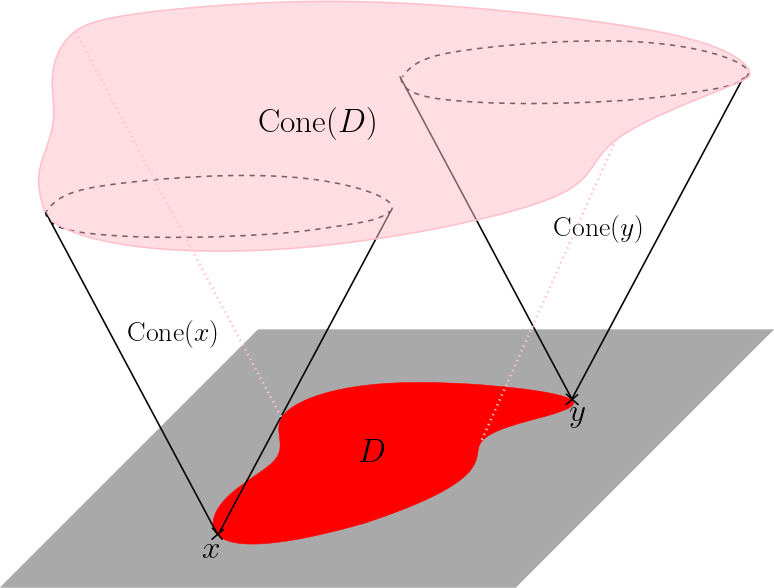}
\caption{An illustration of the cone above a compact set $D$ in the case $d=2$.}
\label{f:cones}
\end{figure}

\begin{lemma}
\label{l:vc2}
There exists a constant $c_\mu > 0$ such that, for every finite collection of cubes $\mathcal{S} = (S_{i_j})_{j = 1,\ldots, k}$,
\[  |\mathcal{S}| \le \PVol( \cone( \cup_j   S_{i_j} ) ) \le c_\mu |\mathcal{S}| . \]
\end{lemma}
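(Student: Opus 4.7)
The plan is to handle the two inequalities separately, with the lower bound being essentially immediate and the upper bound reducing, via a slicing argument, to the integrability hypothesis \eqref{a:wk}.

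For the lower bound I would observe that for any $x \in \R^d$ and $y \ge 0$ one has $\dist(x,x) = 0 \le y$, so $(x,y) \in \cone(x)$, and hence $\cone(D) \supseteq D \times \R_+$ for every $D \subseteq \R^d$. Applying this with $D = \cup_j S_{i_j}$, and using that the cubes $S_{i_j}$ are pairwise disjoint up to boundaries so that their contributions to $\PVol$ add, I get
\[ \PVol(\cone(\cup_j S_{i_j})) \ge \PVol(\cup_j (S_{i_j} \times \R_+)) = \sum_j \Vol(S_{i_j})\mu(\R_+) = |\mathcal{S}|. \]

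For the upper bound I would first rewrite the cone as a sub-level set, namely $\cone(D) = \{(x',r) : \dist(x',D) \le r\}$, and then Fubinize:
\[ \PVol(\cone(D)) = \int_0^\infty \Vol\!\left(\{x' \in \R^d : \dist(x',D) \le r\}\right) d\mu(r) = \int_0^\infty \Vol(D + \overline{B_r})\, d\mu(r), \]
where the last identity uses the Minkowski-sum description of the $r$-neighbourhood. For $D = \cup_j S_{i_j}$, each summand cube satisfies $S_{i_j} + \overline{B_r} \subseteq \overline{B_{r + \sqrt{d}}}(c_{i_j})$ where $c_{i_j}$ is the centre of $S_{i_j}$, so by subadditivity of Lebesgue measure $\Vol(D + \overline{B_r}) \le |\mathcal{S}| \cdot c_d (r + \sqrt{d})^d$ where $c_d$ is the volume of the unit ball. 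Hence
\[ \PVol(\cone(D)) \le |\mathcal{S}| \cdot c_d \int_0^\infty (r + \sqrt{d})^d \, d\mu(r). \]

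The only remaining point is to check that this last integral is finite, which I expect to be the only place where the proof uses any nontrivial input: expanding $(r+\sqrt d)^d$ by the binomial theorem yields a finite sum of terms of the form $r^k$ with $0 \le k \le d$, and since $\mu$ is a probability measure on $\R_+$ satisfying \eqref{a:wk}, Jensen (or just $r^k \le 1 + r^d$ on $\R_+$) ensures $\int r^k d\mu < \infty$ for all $k \le d$. This produces a finite constant $c_\mu$ depending only on $d$ and $\mu$, completing the upper bound. I do not anticipate a real obstacle here; the slicing reduction is elementary and the integrability is exactly what assumption \eqref{a:wk} provides.
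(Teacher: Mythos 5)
Your proof is correct and follows essentially the same route as the paper: the lower bound is identical, and your upper bound is a per-cube bound, just carried out by Fubini in the radius variable rather than by writing $\cone(\cup_j S_{i_j}) = \cup_j \cone(S_{i_j})$ and using subadditivity to get $\sum_j \PVol(\cone(S_{i_j})) = c_\mu |\mathcal{S}|$ with $c_\mu$ the Poisson-Boolean volume of the cone above a single cube. Your explicit verification that this constant is finite, via the binomial expansion and \eqref{a:wk}, is a welcome addition: the paper leaves this finiteness implicit in the proof of the lemma itself, noting only later that cones above compact sets have finite Poisson-Boolean volume under \eqref{a:wk}.
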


\begin{proof}
Fix a collection $\mathcal{S} = (S_{i_j})_{j = 1,\ldots, k}$. For the lower bound,
\[\PVol( \cone( \cup_j   S_{i_j} ) )\ge \PVol(\cup_j   S_{i_j} \times (0,\infty))=|\mathcal{S}|, \]
where we used that $\mu$ is a probability measure. For the upper bound, we write
\[\PVol( \cone( \cup_j   S_{i_j} ) )=\PVol( \cup_j   \cone(S_{i_j} ) )\le \sum_j \PVol( \cone(S_{i_j} ) )=c_\mu |\mathcal{S}|,
\]
where $c_\mu$ is the Poisson-Boolean volume of the cone above a single cube. Both arguments are illustrated in Figure \ref{f:pyramids}.
\end{proof}

\begin{figure}
\centering
\includegraphics[height=3.4cm]{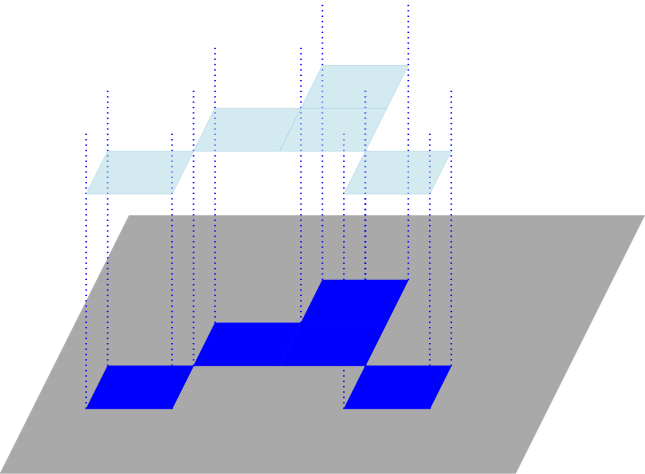}
\includegraphics[height=2.1cm]{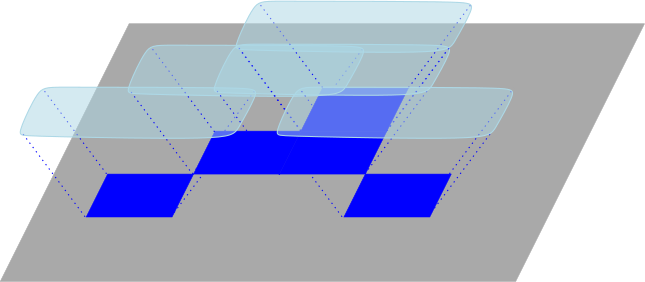}
\caption{The two bounds in the proof of Lemma~\ref{l:vc2} in the case $d=2$: the Poisson-Boolean volume of the union of cones is at least (resp.\ most) that of the shape depicted on the left (resp.\ right), $\mathcal{S}$ being represented in dark blue.}\label{f:pyramids}
\end{figure}

\medskip

\section{Proof of the main results}
\label{s:mr}

In this section we complete the proof of Theorems \ref{t:s}--\ref{t:m}, which will consist of applying the entropic bounds from Section \ref{s:entropic} to well-chosen algorithms.

\subsection{The template algorithm}
We wish to define algorithms which verify the events (recall that $\mathcal{G}$ is the ghost field introduced in Section \ref{s:entropic})
\begin{equation}
\label{e:a}
\{0\longleftrightarrow \partial B_R\} , \quad  \{\Vol(\C) \ge y \}  \quad \text{and} \quad    \{ 0 \longleftrightarrow \mathcal{G} \}    ,
 \end{equation}
and it will be convenient to base these on a single template. 

\smallskip
Recall that an algorithm is an adapted procedure which sequentially reveals $\eta|_{C_{i_j}}$ for a random sequence of hypercubes $(C_{i_j})_{j \in \N}$. For such an algorithm, define $\eta_n =   \cup_{j \le n} \eta|_{C_{i_j}}$ to be the points that are revealed by the algorithm up to step $n$, and similarly define the revealed portion of the Poisson-Boolean model and the cluster of the origin in this set
\[ \mathcal{O}_n  = \bigcup\limits_{(x, r) \in \eta_n} \{x + B_r \}   \quad \text{and} \quad  \mathcal{C}_n = \{ x \in \R^d : 0 \stackrel{\mathcal{O}_n}{\longleftrightarrow} x  \} . \]

\begin{algorithm*}
\label{algo}
$\,$
\begin{itemize}
\item Initialise $n=0$ and the revealed set $\mathcal{R} = \emptyset$, and define the active set $\mathcal{A}$ to be the collection of hypercubes which intersect $\cone(0)$, the cone above $0$.
\item Iterate the following:
\begin{itemize}
\item Increment $n \to n+1$.
\item If $\mathcal{A}$ is empty skip to the end of the iteration (and thus enter an infinite loop).
\item Assuming $\mathcal{A}$ is non-empty, define $C_{i_n}$ to be the hypercube in $\mathcal{A}$ whose index $i \in \Z^d \times \Z^+$ has smallest sup-norm, breaking ties arbitrarily.
\item Add $C_{i_n}$ to the revealed set $\mathcal{R}$ and update the active set
\[ \mathcal{A} = \Big\{ C_i : C_i \cap  \cone(\mathcal{S}_{\C_n}) \neq \emptyset \Big\}  \setminus \mathcal{R}  .\]
In words, the active set consists of all unrevealed hypercubes which are in the cone above the cubes that intersect the cluster of the origin in the revealed portion of the model.
\item If $\{ \mathcal{O}_n \in A \}$ holds, terminate and return value $1$.
\end{itemize}
\end{itemize}
\end{algorithm*}

See Figure \ref{f:algo} below for an illustration of $\mathcal{T}$ for the event $A = \{0 \longleftrightarrow \partial B_R\}$.

\smallskip
We also define a \emph{magnetic} version of $\mathcal{T}$ in the setting where we have a ghost field $\mathcal{G}$ and the event $A$ may depend on $\mathcal{G}$; in this case, $\mathcal{T}$ first reveals $\mathcal{G}$ before proceeding as before, and terminates if $\{ (\mathcal{O}_n, \mathcal{G}) \in A \}$.

\smallskip
For the entropic bounds in Section \ref{s:entropic} to apply, we need conditions guaranteeing that $\mathcal{T}$ `locally determines' the event $A$. We say that $A$ is \textit{increasing} if $\{ \mathcal{O} \in A\} \subset \{\mathcal{O}' \in A\}$ for any $\mathcal{O} \subset \mathcal{O}'$. We say that $A$ is a \textit{local cluster event} if it is almost-surely witnessed by $\{ x \in D : 0 \stackrel{\mathcal{O}\cap{D}}{\longleftrightarrow} x \}$ for some (random) compact $D \subset \R^d$ (or witnessed by $(\{ x \in D : 0 \stackrel{\mathcal{O}\cap{D}}{\longleftrightarrow} x \}, \mathcal{G})$ in the magnetic case); clearly the events in \eqref{e:a} are local cluster events. 

\begin{lemma}
Suppose $A$ is an increasing local cluster event. Then $\mathcal{T}$ locally determines $A$.
\end{lemma}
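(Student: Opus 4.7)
The plan is to verify the three defining properties of ``locally determines'' in turn: (i) a deterministic finite bound $D_n$ on the hypercubes potentially revealed in the first $n$ steps, (ii) correctness of the returned value whenever $\mathcal{T}$ terminates, and (iii) almost-sure termination of $\mathcal{T}$ on the event $A$. Properties (i) and (ii) should follow from direct structural observations about $\mathcal{T}$ and the increasing property of $A$; the main work will be in (iii).

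For (i), I first observe that the initial active set $\mathcal{A}_{\mathrm{init}}=\{C_i:C_i\cap\cone(0)\neq\emptyset\}$ is deterministic, independent of $\lambda$, and contains only finitely many hypercubes of each fixed sup-norm. I would then show inductively that at every iteration in which the active set is non-empty, $\mathcal{A}$ contains $\mathcal{A}_{\mathrm{init}}\setminus\mathcal{R}$: either $\mathcal{C}_n=\emptyset$ (in which case the algorithm enters the infinite loop and reveals nothing further) or $0\in\mathcal{C}_n$, giving $S_0\in\mathcal{S}_{\mathcal{C}_n}$ and hence $\cone(0)\subseteq\cone(\mathcal{S}_{\mathcal{C}_n})$. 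Since each step picks the smallest sup-norm element of $\mathcal{A}$, the sup-norm of $C_{i_n}$ is bounded by the $n$-th smallest sup-norm $s_n$ of elements of $\mathcal{A}_{\mathrm{init}}$, so $D_n=\{C_i:\|i\|_\infty\le s_n\}$ suffices. For (ii), the only terminating return value is $1$, triggered by $\{\mathcal{O}_n\in A\}$; since $\mathcal{O}_n\subseteq\mathcal{O}$ and $A$ is increasing, this forces $\mathcal{O}\in A$, and the returned value equals $\id_A$.

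For (iii) I would argue by contradiction: assume the algorithm does not terminate on some $\omega\in A$. Let $\eta_\infty=\cup_n\eta_n$ with corresponding $\mathcal{O}_\infty$ and $\mathcal{C}_\infty$. The first key step is to show $\mathcal{C}_\infty=\mathcal{C}$ by induction on the number of balls along a path in $\mathcal{O}$ from $0$ to a given $z\in\mathcal{C}$: the first ball $(x_1,r_1)$ satisfies $|x_1|\le r_1$, so $(x_1,r_1)\in\cone(0)$ and its hypercube lies in $\mathcal{A}_{\mathrm{init}}$, which is fully revealed in the limit by the enumeration in (i); inductively, any intersection point $y$ between a ball already in $\mathcal{C}_\infty$ and the next ball $(x_{i+1},r_{i+1})$ on the path satisfies $\mathrm{dist}(x_{i+1},y)\le r_{i+1}$, so $(x_{i+1},r_{i+1})\in\cone(y)\subseteq\cone(\mathcal{S}_{\mathcal{C}_\infty})$ and is eventually revealed. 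The same geometric identity $(x,r)\in\cone(y)$ whenever $y\in x+B_r$ implies that every ball contributing to the witness $\{x\in D:0\stackrel{\mathcal{O}\cap D}{\longleftrightarrow} x\}$ also lies in $\eta_\infty$, whence $\mathcal{O}_\infty\in A$ by the local-cluster and increasing properties. Finally, the moment condition \eqref{a:wk} and compactness of $D$ imply $\E_\lambda[\eta(\cone(D))]<\infty$ almost surely, so only finitely many balls need to be revealed; once their (finitely many) hypercubes are revealed the event $\{\mathcal{O}_n\in A\}$ is triggered at some finite step, contradicting non-termination. The magnetic variant is handled identically after the initial revelation of $\mathcal{G}$. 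The main technical obstacle, and the reason the template is built around cones, is precisely this induction linking the algorithmic exploration region $\cone(\mathcal{S}_{\mathcal{C}_n})$ to the path structure of the ambient cluster $\mathcal{C}$.
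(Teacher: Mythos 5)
Your proposal is correct and follows essentially the same route as the paper: the deterministic bound $D_n$ comes from the smallest-sup-norm selection rule together with the hypercubes above $S_0$ being always available, correctness of the returned value comes from monotonicity of $A$, and termination on $A$ comes from the cone geometry (a ball touching the currently revealed cluster has its $\eta$-point in the cone above it, so the unexplored part of the witness cluster always leaves an active hypercube) combined with almost-sure finiteness of $\eta$ on the cone above the witness compact, via \eqref{a:wk}. The only point to tidy is your $\mathcal{C}_n=\emptyset$ branch in (i): to make the claim in (iii) that the initial active hypercubes are eventually revealed consistent, you must adopt the convention (used implicitly by the paper, e.g.\ in its assertion that the hypercubes above $S_0$ remain available) that the origin always belongs to $\mathcal{C}_n$, so that $\cone(0)\subset\cone(\mathcal{S}_{\mathcal{C}_n})$ and the active set never collapses before the first ball covering $0$ is found.
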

\begin{proof}
We need to verify the three conditions of the definition of `locally determine'. By definition, the hypercube $C_{i_n}$ that $\mathcal{T}$ reveals at step $n$ is in $\cone( \mathcal{S}_{\C_n} ) \subset \cone(  \{ S_j : \|j\|_\infty \le n \} )$. Also, since $C_{i_n}$ is chosen among the active hypercubes with lowest sup-norm, and the hypercubes above $S_0$ are all initially active, the $(d+1)$-coordinate of $C_{i_n}$ is at most $n$. Hence $C_{i_n}  \in \{ C_j : \|j\|_\infty \le 2n\}$, and the first condition holds. 

Next observe that, due to the integrability \eqref{a:wk}, the cone $\cone(D)$ above a compact $D \subset \R^d$ has finite Poisson-Boolean volume, and so almost surely $\eta|_{\cone(D)}$ is finite. Further, define 
\[ \mathcal{S}' = \{ C_i \text{ intersecting } \cone( \{ x \in D : 0 \stackrel{\mathcal{O}\cap D}{\longleftrightarrow} x \}) : C_i \cap \eta \neq \emptyset\} , \]
 and let $M$ be the maximal sup-norm of the indices in $\mathcal{S}'$. By definition, $\mathcal{T}$ reveals at most $(2M+1)^{d+1}$ hypercubes before revealing all the hypercubes in $\mathcal{S}'$ (since if $\mathcal{S}'$ is not yet revealed and $\mathcal{T}$ is yet to terminate then there is at least one hypercube in $\mathcal{S}'$ which is active), and if all the hypercubes in $\mathcal{S}'$ are revealed, the whole of $ \{ x \in D : 0 \stackrel{ \mathcal{O}\cap D}{\longleftrightarrow} x \}$ is known. Hence, for every compact $D \subset \R^d$, $\mathcal{T}$ eventually reveals the set $ \{ x \in D : 0 \stackrel{\mathcal{O}\cap D}{\longleftrightarrow} x \}$ unless it terminates before doing so. We have assumed that $A$ is a local cluster event, thus we consider $D$ to be its witness compact set. The previous argument justifies that if $A$ occurs then it will eventually be witnessed by $\mathcal{O}_n$, for $n$ smaller or equal to $(2M+1)^{d+1}$. At that point $\mathcal{T}$ will terminate and return $1$. On the other hand, if $A$ does not occur then, since $A$ is increasing, $\{\mathcal{O}_n \in A\}$ never holds and $\mathcal{T}$ does not terminate by definition. Together these verify the second and third conditions.
\end{proof}

\subsection{Bounds on the volume of the revealed set}
Recall that the entropic bounds in Section~\ref{s:entropic} are stated in terms of the Poisson-Boolean volume of the revealed set. The next proposition bounds this in terms of the ambient volume:

\begin{proposition}
\label{p:vc3}
For every $0 < \lambda_0 < \lambda_1$ there exists a $c > 0$ such that, for every $\lambda \in [\lambda_0, \lambda_1]$, $\rho > 0$, and event $A$, the algorithm $\mathcal{T}$ satisfies
\[ \E_{\lambda, \rho}[  \PVol( \mathcal{W}_\mathcal{T})  ] \le c \E_{\lambda}[ \Vol(\mathcal{C})]   ,\]
where we recall that $\mathcal{W}_\mathcal{T}$ is the union of the hypercubes revealed by $\mathcal{T}$.
Moreover, if $y \ge 1$ and $A = \{ \Vol(\C) \ge y\}$, then
\[ \E_\lambda[  \PVol( \mathcal{W}_\mathcal{T})  ] \le  c \int_0^y \P_\lambda[\Vol(\C) \ge u] du   . \]
Finally, for every $0 < \lambda_0 < \lambda_1$ and $\rho_0 > 0$ there exists a $c > 0$ such that, for every $\lambda \in [\lambda_0, \lambda_1]$ and $\rho \in (0,\rho_0]$, if $A = \{ 0 \longleftrightarrow \mathcal{G} \}$, then
\[  \E_{\lambda,\rho}[  \PVol( \mathcal{W}_\mathcal{T})  ] \le  c \rho^{-1}  \E_\lambda\big[ 1 - e^{-\rho \Vol(\mathcal{C})} \big]  .\] 

\end{proposition}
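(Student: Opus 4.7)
The plan is to bound $\PVol(\mathcal{W}_\mathcal{T})$ by the number of spatial cubes intersecting an appropriate discovered cluster, then convert back to ambient volume via Proposition \ref{p:vc1}. The starting observation is geometric: every hypercube $C_{i_n}$ revealed by $\mathcal{T}$ lies in $\cone(\mathcal{S}_{\C_{n-1}})$, so the full revealed set satisfies $\mathcal{W}_\mathcal{T} \subseteq \cone(\mathcal{S}_{\C_{\tau-1}})$, where $\C_{\tau-1}$ denotes the discovered cluster immediately before termination (interpreted as the full $\C$ if $\mathcal{T}$ does not terminate). Combined with Lemma \ref{l:vc2}, this yields the master bound $\PVol(\mathcal{W}_\mathcal{T}) \le c_\mu |\mathcal{S}_{\C_{\tau-1}}|$, applicable uniformly in all three cases.

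For the first bound we use only $\C_{\tau-1} \subseteq \C$ and reduce to showing $\E_\lambda[|\mathcal{S}_\C|] \le c\, \E_\lambda[\Vol(\C)]$. A dyadic decomposition $\E[|\mathcal{S}_\C|] \le \sum_{k \ge 0} 2^{k+1} \P[|\mathcal{S}_\C| \ge 2^k]$ combined with Proposition \ref{p:vc1} at scale $n = 2^k$ (giving $\P[|\mathcal{S}_\C| \ge 2^k] \le \P[\Vol(\C) \ge \delta 2^k] + e^{-\delta 2^k}$) yields a bound of the form $c \, \E_\lambda[\Vol(\C)] + O(1)$, the additive constant being absorbed via the uniform lower bound $\E_\lambda[\Vol(\C)] \ge c(\mu, \lambda_0) > 0$ for $\lambda \ge \lambda_0$. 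For the second bound the structure of $A = \{\Vol(\C) \ge y\}$ forces $\Vol(\C_{\tau-1}) < y$ throughout (either termination enforces this, or $A^c$ occurs and $\C_{\tau-1} = \C$ has $\Vol(\C) < y$), so $\Vol(\C_{\tau-1}) \le \Vol(\C) \wedge y$; applying the same dyadic argument to $\C_{\tau-1}$ gives $\E[|\mathcal{S}_{\C_{\tau-1}}|] \le c \int_0^y \P[\Vol(\C) \ge u]\, du + O(1)$, with the additive term absorbable via $\int_0^y \P[\Vol(\C) \ge u]\, du \ge \P[\Vol(\C) \ge 1] \ge c(\mu, \lambda_0) > 0$ for $y \ge 1$.

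For the third (magnetic) bound the plan is to condition on $\eta$, so that $v_n := \Vol(\C_n)$ and $h_n := \PVol(C_{i_n})$ become deterministic functions of $\eta$ while $\P_{\lambda, \rho}[\tau > n \, | \, \eta] = e^{-\rho v_n}$, yielding the explicit identity
\[ \E_{\lambda, \rho}[\PVol(\mathcal{W}_\mathcal{T}) \, | \, \eta] = \sum_{n \ge 1} h_n e^{-\rho v_{n-1}} = \rho \int_0^\infty H(t) e^{-\rho t}\, dt, \]
where $H(t) := \sum_{n : v_{n-1} < t} h_n \le c_\mu |\mathcal{S}_{\widehat{\C}_t}|$ for the discovered cluster $\widehat{\C}_t$ of volume strictly less than $t$. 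Proposition \ref{p:vc1} then yields $H(t) \le c_\mu(n_0 + \delta^{-1} t)$ on $G_{n_0}$, and direct evaluation of the Laplace-type integral produces a bound of the form $c \, \rho^{-1}(1 - e^{-\rho \Vol(\C)}) + c \, n_0$ on $G_{n_0}$. The restriction $\rho \le \rho_0$ then absorbs the additive $c \, n_0$ term via the monotonicity of $\rho \mapsto \rho^{-1}(1 - e^{-\rho V})$ and the uniform lower bound $M(\rho_0) \ge c(\mu, \lambda_0) > 0$, which together yield $\rho^{-1} M(\rho) \ge \rho_0^{-1} M(\rho_0) \ge c > 0$. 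The main obstacle is the complementary event $G_{n_0}^c$, where the deterministic volume comparison of Proposition \ref{p:vc1} is unavailable; I would handle this via Cauchy--Schwarz, bounding $\E[|\mathcal{S}_\C| \id_{G_{n_0}^c}] \le \sqrt{\E[|\mathcal{S}_\C|^2] \, \P[G_{n_0}^c]}$, with $\E[|\mathcal{S}_\C|^2]$ controlled by a further dyadic argument comparing it to $\E[\Vol(\C)^2]$ (finite and uniformly bounded for $\lambda \in [\lambda_0, \lambda_1]$ in the subcritical regime relevant to the applications), and taking $n_0$ sufficiently large to render the contribution absorbable.
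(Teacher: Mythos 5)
Your handling of the first two bounds is essentially the paper's argument: the paper integrates the tail $\P[\PVol(\mathcal{W}_\mathcal{T})\ge s]$ against scale-dependent good events $G_{n_s}$ with $n_s\asymp s$ rather than summing dyadically, but the mechanism (master bound $\PVol(\mathcal{W}_\mathcal{T})\lesssim |\mathcal{S}_{\C_{\tau-1}}|$, volume comparison via Proposition \ref{p:vc1} applied to $\C$ resp.\ $\C_{\tau-1}$, absorption of the additive constant by a uniform lower bound on the right-hand side) is the same, and your reductions are correct. Two small imprecisions: revealed hypercubes only \emph{intersect} the relevant cone, so one should pass to a bounded neighbourhood of $\mathcal{S}_{\C_{\tau-1}}$ (the paper uses $\mathcal{S}^+$), which only changes constants; and in the third bound, to obtain the factor $1-e^{-\rho\Vol(\C)}$ you must use $H(t)\le c_\mu\bigl(n_0+\delta^{-1}\min(t,\Vol(\C))\bigr)$ (since $\widehat{\C}_t\subseteq\C$), as your displayed bound $c_\mu(n_0+\delta^{-1}t)$ only yields $c\rho^{-1}+c\,n_0$, which is not sufficient; your stated conclusion shows you intended the refined version. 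The conditional-on-$\eta$ identity $\E_{\lambda,\rho}[\PVol(\mathcal{W}_\mathcal{T})\mid\eta]=\sum_n h_n e^{-\rho v_{n-1}}$ is a correct and clean reformulation of the paper's key observation that $\C_{\bar\tau}\cap\mathcal{G}=\emptyset$.

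The genuine gap is your treatment of $G_{n_0}^c$ in the third bound. The proposition is asserted for every $\lambda\in[\lambda_0,\lambda_1]$ with $\lambda_1$ arbitrary, and in the paper it is applied, in the proof of \eqref{e:m2}, precisely at $\lambda=\lambda_c$. There $\E_{\lambda}[\Vol(\C)^2]$ is not known to be finite --- even $\E_{\lambda_c}[\Vol(\C)]$ is expected to be infinite --- so the Cauchy--Schwarz step $\E[|\mathcal{S}_\C|\id_{G_{n_0}^c}]\le\sqrt{\E[|\mathcal{S}_\C|^2]\,\P[G_{n_0}^c]}$ is vacuous exactly where the estimate is needed; invoking ``the subcritical regime relevant to the applications'' does not cover the actual application, and in any case the statement must hold up to $\lambda_1>\lambda_c$. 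The repair is the device you already used for the first two bounds (and which the paper uses for all three): do not fix a single $n_0$, but bound the tail $\P_{\lambda,\rho}[\PVol(\mathcal{W}_\mathcal{T})\ge s]$ using $G_{n_s}$ with $n_s\asymp s$, whose complement costs only $e^{-cs}$, uniformly in $\lambda\le\lambda_1$ and with no moment hypotheses. Combined with the observation that $\P_{\lambda,\rho}[\Vol(\C_{\bar\tau})\ge u]\le e^{-\rho u}\P_\lambda[\Vol(\C)\ge u]$ (given $\eta$, the ghost field must miss the first discovered cluster whose volume reaches $u$), integrating in $s$ gives $c\,\rho^{-1}\E_\lambda\bigl[1-e^{-\rho\Vol(\C)}\bigr]+O(1)$, and the $O(1)$ term is absorbed exactly as you propose via the monotonicity of $\rho\mapsto\rho^{-1}(1-e^{-\rho V})$ and the restriction $\rho\le\rho_0$.
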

\begin{proof}
For the first statement we separate the cases (i) $\PVol(\mathcal{W}_\mathcal{T})$ is infinite with positive probability, and (ii) $\PVol(\mathcal{W}_\mathcal{T})$ is finite almost surely.

In the first case it suffices to show that $ \{ \PVol( \mathcal{W}_\mathcal{T}) = \infty \} \subset \{  \Vol(\mathcal{C}) = \infty \}$. Recall that $\mathcal{S}_{\C} \subset S$ denotes the collection of cubes which intersect $\C$, which is necessarily an animal. Since $\mathcal{T}$ always reveals, at step $n$, a hypercube in $\cone( \mathcal{S}_n ) \subset \cone(\mathcal{S}_\C) $, it follows that $\mathcal{W}_\mathcal{T} \subset \cone(\mathcal{S}_\C)$.
Hence by using the upper bound in Lemma \ref{l:vc2} and then Corollary \ref{c:vc}, for every $\lambda,\rho> 0$, almost surely
\begin{equation}
\label{e:infty}
 \{ \PVol( \mathcal{W}_\mathcal{T}) = \infty \} \subset \{ \PVol(\cone(\mathcal{S}_\C)) = \infty \} \subset \{ | \mathcal{S}_\C| = \infty \} = \{  \Vol(\mathcal{C}) = \infty \} 
 \end{equation}
 as required.
 
We turn to the second case. Fix the constant $ \delta > 0$ and the events $(G_n)_{n \ge \N}$ guaranteed to exist by Proposition~\ref{p:vc1} (applied to $\lambda_1$), and also the constant $c_\mu > 0$ defined by Lemma \ref{l:vc2}. For $s > 0$ define $n = n_s = \lceil s /c_\mu \rceil \in \N$. Recalling that $\mathcal{W}_\mathcal{T} \subset \cone(\mathcal{S}_\C)$ and then using the upper bound in Lemma \ref{l:vc2} and the lower bound in Proposition~\ref{p:vc1} (applied to $\mathcal{C} \in \mathfrak{O})$, for every $\lambda \le \lambda_1$ and $s > 0$,
 \[ \{  \PVol(\mathcal{W}_\mathcal{T} ) \ge s \} \cap G_n \! \subset  \{ \PVol(\cone(\mathcal{S}_\C)) \ge s \} \cap G_n \! \subset  \{ |\mathcal{S}_\C| \ge s/c_\mu \} \cap G_n \! \subset \{ \Vol(\C) \ge \delta s / c_\mu  \} .\]
Hence we have
\[ \P_{\lambda,\rho}[\PVol(\mathcal{W}_\mathcal{T} ) \ge s ] \le \P_\lambda[ \Vol(\C) \ge \delta   s /c_\mu   ] +  e^{- \delta s /c_\mu  } . \]
Integrating over $s \in (0,\infty)$ (recall that we assume $ \PVol(\mathcal{W}_\mathcal{T})$ is finite almost surely), we have
\[ \E_{\lambda,\rho}[  \PVol( \mathcal{W}_\mathcal{T})  ] \le (c_\mu  / \delta)  \E_\lambda[ \Vol(\mathcal{C})]  + c_\mu/\delta .\]
Since $\lambda \mapsto \E_\lambda[ \Vol(\mathcal{C})]$ is increasing, and so bounded away from zero over $\lambda \ge \lambda_0$, this implies the first statement by adjusting constants.

For the second statement, recall that $\tau$ denotes the (possibly infinite) stopping time of $\mathcal{T}$, and define $\bar{\tau} = \tau - \id_{\tau < \infty}$. The crucial observation is that $\Vol( \C_{\bar{\tau}} ) < y $ since otherwise $\mathcal{T}$ would have verified the event $A = \{ \Vol(\C) \ge y\}$ prior to step $\bar{\tau}$ and terminated. On the other hand $\mathcal{W}_\mathcal{T} \subset \cone(\mathcal{S}^+_{\C_{\bar{\tau}}})$, where $\mathcal{S}^+_{\C_{\bar{\tau}}} = \bigcup\limits_{ S_i \in \mathcal{S}_{\C_{\bar{\tau}}}}S_i^+$ is the union of $\mathcal{S}_{\C_{\bar{\tau}}}$ with its neighbours. Noticing that $|\mathcal{S}^+_{\C_{\bar{\tau}}}| \le 3^d |\mathcal{S}_{\C_{\bar{\tau}}}|$, redefining $n = n_s = 3^{-d} s / c_\mu $, and using similar reasoning as before (this time applying Proposition \ref{p:vc1} to $\C_{\bar{\tau}}$), for every $\lambda \le \lambda_1$ and $s > 0$,
 \begin{align*}
  \{  \PVol(\mathcal{W}_\mathcal{T} ) \ge s \} \cap G_{n} &  \subset  \{ \PVol( \cone(\mathcal{S}^+_{\C_{\bar{\tau}}})) \ge s \} \cap G_{n} \subset  \{ |\mathcal{S}^+_{\C_{\bar{\tau}}}| \ge s/c_\mu  \} \cap G_{n}  \\
   & \subset  \{ |\mathcal{S}_{\C_{\bar{\tau}}}| \ge 3^{-d} s/c_\mu  \} \cap G_{n}   \subset \{ \Vol( \C_{\bar{\tau}} ) \ge \delta 3^{-d}  s / c_\mu     \}  .
  \end{align*}
  Since $\Vol( \C_{\bar{\tau}} ) < y $, the latter event is empty if $s \ge c_\mu 3^d y/\delta $. Hence 
\[ \P_\lambda[\PVol(\mathcal{W}_\mathcal{T} ) \ge s  ]  \le   \begin{cases}   \P_\lambda[ \Vol(\C_{\bar{\tau}}) \ge \delta   3^{-d}  s /c_\mu    ] +  e^{- \delta  3^{-d} s /c_\mu   } & \text{if } s \le c_\mu 3^d y/\delta   , \\ e^{- \delta  3^{-d} s /c_\mu   } & \text{if } s \ge c_\mu 3^d y/\delta   .  \end{cases} \]
Using that $\Vol(\C) \ge  \Vol(\C_{\bar{\tau}})$, and integrating over $s \in (0, \infty)$, gives
\[ \E_\lambda[\PVol(\mathcal{W}_\mathcal{T} ) ] \le (c_\mu 3^d / \delta) \int_0^y \P_\lambda[ \Vol(\C)  \ge u ] du  +  c_\mu 3^d/\delta    , \]
and since $ \int_0^y \P_\lambda[ \Vol(\C)  \ge u ] du$ is bounded away from zero over $\lambda \ge \lambda_0$ and $y \ge 1$, the result follows by adjusting constants.

For the third statement. Defining $\bar{\tau}$ as before, this time the crucial observation is that  $| \C_{\bar{\tau}} \cap  \mathcal{G} | = 0 $ since otherwise the algorithm would have verified the event $A = \{0 \longleftrightarrow \mathcal{G}\}$ prior to step $\bar{\tau}$ and terminated. Hence conditioning on $\mathcal{O}$ and using the independence of $\mathcal{G}$, for any $\lambda,\rho, s > 0$,
\[ \P_{\lambda,\rho}[ \Vol(\C_{\bar{\tau}}) \ge s ]  = \E \big[  \id_{ \Vol(\C) \ge s}  \P_{\lambda,\rho}[ \Vol(\C_{\bar{\tau}}) \ge s \: \big|\: \mathcal{O} ]  \big]  \le e^{-\rho s}  \P_{\lambda}[  \Vol(\C) \ge s ]  \]
 where we used that $\Vol(\C) \ge  \Vol(\C_{\bar{\tau}})$ in the first step. 
 To justify the second step, recall that $\P_{\lambda,\rho}[ \Vol(\C_{\bar{\tau}}) \ge s \: \big|\: \mathcal{O} ]$ is a measurable function of $\mathcal{O}$. And for any fixed configuration $O$ of $\mathcal{O}$ such that $\mathcal{C}$ has volume larger than $s$, $\P_{\lambda,\rho}[ \Vol(\C_{\bar{\tau}}) \ge s \: \big|\: \mathcal{O}=O]$ is the probability of there being no point of the ghost field in a deterministic set of volume larger than $ s$, thus the value of the function is smaller than $e^{-\rho s}$.
 Arguing as in the proof of the second statement we deduce that, for every $\lambda \le \lambda_1$ and $\rho,s > 0$,
\begin{align*}
  \P_{\lambda,\rho}[\PVol(\mathcal{W}_\mathcal{T} ) \ge s  ]  &   \le  \P_{\lambda,\rho}[ \Vol(\C_{\bar{\tau}}) \ge \delta 3^{-d} s  /c_\mu   ]  +  e^{- \delta  3^{-d} s /c_\mu   }   \\
 & \le  e^{-\rho \delta 3^{-d} s/c_\mu }  \P_{\lambda}[  \Vol(\C) \ge \delta 3^{-d} s/c_\mu  ] +  e^{- \delta 3^{-d} s /c_\mu    } .
  \end{align*}
 Integrating over $s \in (0, \infty)$ we obtain  
\begin{align*}
 \E_{\lambda,\rho}[\PVol(\mathcal{W}_\mathcal{T} ) ]  & \le (c_\mu 3^d  / \delta) \int_0^\infty e^{-\rho u }  \P_\lambda[ \Vol(\C)  \ge u ] du  + c_\mu 3^d /\delta \\
 & = (c_\mu 3^d / \delta) \rho^{-1} \E_\lambda[ 1 - e^{- \rho \Vol(\C) }    ]   + c_\mu 3^d /\delta ,
 \end{align*}
  where in the last step we used that, for every $\rho > 0$ and non-negative random variable $X$,
 \[   \int_0^\infty e^{-\rho u} \P[ X \ge u] du =  \E\Big[ \int_0^\infty  e^{-\rho u}  \id_{X \ge u} du \Big]   =   \E\Big[ \int_0^X  e^{-\rho u} du \Big] =   \rho^{-1} \E\big[ 1 - e^{-\rho X} \big]  .\]
Finally, since $ \rho^{-1} \E_\lambda[ 1 - e^{- \rho \Vol(\C) }    ]  $ is bounded away from zero over $\lambda \ge \lambda_0$ and $\rho \in (0, \rho_0]$, the result follows by adjusting constants.
\end{proof}
 
\subsection{Proof of Theorem \ref{t:s}}
It is enough to prove the result for $\lambda < \lambda_c$ sufficiently close to $\lambda_c$. We begin with \eqref{e:s1}. Fix $\lambda<\lambda_c$ and consider the algorithm $\mathcal{T}$ for the event $A = \{0 \longleftrightarrow \partial B_R\}$ (illustrated by Figure \ref{f:algo}). 
\begin{figure}
\centering
\includegraphics[width=10cm]{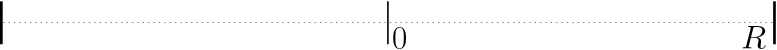}
\includegraphics[width=10cm]{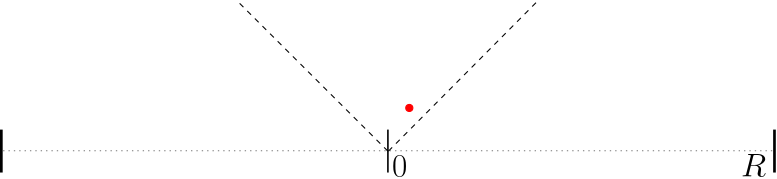}
\includegraphics[width=10cm]{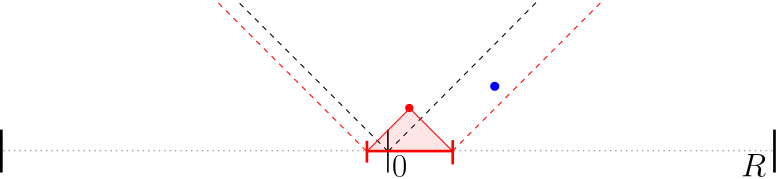}

\hspace{0.18cm}
\includegraphics[width=10.25cm]{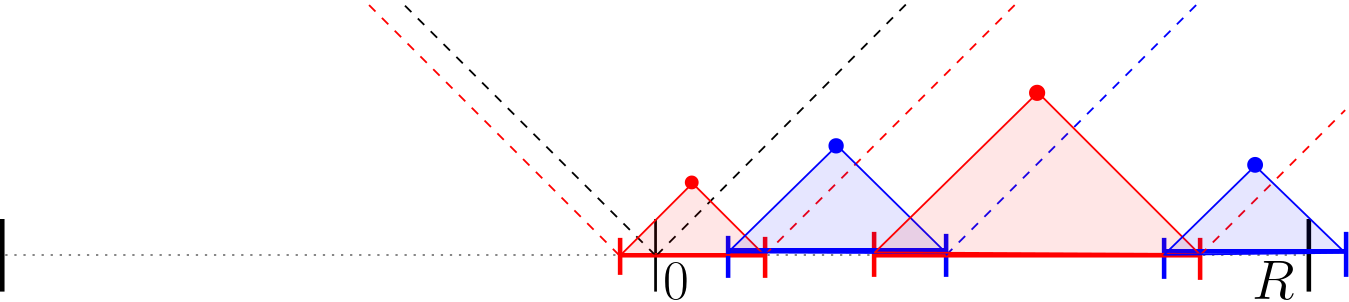}
\caption{Some early steps (i.e.\ the first two points discovered) and the final step of algorithm $\mathcal{T}$ in determining the event $ \{0 \longleftrightarrow \partial B_R\}$ in the case $d=1$. The red and blue points are points of $\eta$ corresponding to balls which contribute to the cluster $\C$. Their projection onto $\R$ is represented in matching colour. The dashed lines represent the boundaries of the successive cones that are active. The algorithm reveals the set of unit hypercubes intersecting these cones one by one starting with those closest to the origin.}
\label{f:algo}
\end{figure}
By the first statement of Proposition~\ref{p:entropic} (with $\lambda_1 = \lambda$ and $\lambda_2 = 2\lambda_c - \lambda > \lambda_c$),
\[ \P_{2\lambda_c-\lambda}[0\longleftrightarrow \partial B_R]-\P_{\lambda}[0\longleftrightarrow \partial B_R]  \le (2/\lambda_c)^{1/2}(\lambda_c-\lambda)  \sqrt{\P_{2\lambda_c-\lambda}[0\longleftrightarrow \partial B_R] \E_\lambda[ \PVol (\mathcal{W}_\mathcal{T}) ]} . \]
The first statement of Proposition \ref{p:vc3} gives that $\E_\lambda[  \PVol( \mathcal{W}_\mathcal{T})  ] \leq c_1 \E_\lambda[ \Vol(\mathcal{C})] $ for a constant $c_1 > 0$, for all $R$ and all $\lambda$ sufficiently close to $\lambda_c$. Thus we have
\[ \P_{2\lambda_c-\lambda}[0\longleftrightarrow \partial B_R]-\P_{\lambda}[0\longleftrightarrow \partial B_R]  \le (2/\lambda_c)^{1/2}(\lambda_c-\lambda)  \sqrt{c_1\P_{2\lambda_c-\lambda}[0\longleftrightarrow \partial B_R]\E_\lambda[ \Vol(\mathcal{C})] } . \]
Taking $R \to \infty$, this gives 
\[ \theta(2\lambda_c-\lambda)  \le  (2/\lambda_c)^{1/2}  (\lambda_c-\lambda)  \sqrt{ c_1\theta(2\lambda_c-\lambda) \E_\lambda[ \Vol(\mathcal{C})]}  \]
and hence
\[ \E_\lambda[ \Vol(\mathcal{C})] \ge c_1^{-1} (\lambda_c/2)  (\lambda-\lambda_c)^{-2} \theta(2\lambda_c-\lambda)   , \]
which is \eqref{e:s1}.

On to \eqref{e:s2}. Fix $\lambda < \lambda_c$, abbreviate $y =(\lambda_c-\lambda)^{-2}$, and consider the algorithm $\mathcal{T}$ for the event $A = \{ \Vol(\C) \ge y \}$. Similarly to as before, by applying Proposition \ref{p:entropic} (with $\lambda_1 = \lambda $ and $\lambda_2 = \lambda_c$) and then using Proposition~\ref{p:vc3}, we deduce that, for a constant $c_2 > 0$ and all $\lambda$ sufficiently close to $\lambda_c$,
\[ \P_{\lambda_c}[\Vol(\C) \ge y]-\P_{\lambda}[\Vol(\C) \ge y]  \le c_2 (\lambda_c-\lambda)  \sqrt{ \P_{\lambda_c}[\Vol(\C) \ge y] \chi(\lambda)} . \]
Applying Markov's inequality to $\P_{\lambda}[\Vol(\C) \ge y]$, this gives 
\begin{equation}
\label{e:s3}
 \P_{\lambda_c}[\Vol(\C) \ge y] - (\lambda_c-\lambda)^2\chi(\lambda) \le c_2 (\lambda_c-\lambda)  \sqrt{ \P_{\lambda_c}[\Vol(\C) \ge y] \chi(\lambda)} . 
 \end{equation}
There are two cases to consider: 
\begin{enumerate}
\item [ 1.] If $(\lambda_c-\lambda)^2\chi(\lambda)\geq \frac12\P_{\lambda_c}[\Vol(\C) \ge y]$, then by rearranging we have
\[ \chi(\lambda)\geq \frac12(\lambda_c-\lambda)^{-2}\P_{\lambda_c}[\Vol(\C) \ge y]. \]
\item [ 2.] If $(\lambda_c-\lambda)^2\chi(\lambda)\leq \frac12\P_{\lambda_c}[\Vol(\C) \ge y]$, then returning to \eqref{e:s3},
\[ \frac12\P_{\lambda_c}[\Vol(\C) \ge y] \le c_2 (\lambda_c-\lambda) \sqrt{\P_{\lambda_c}[\Vol(\C) \ge y] \chi(\lambda)}, \]
so that 
\[ \chi(\lambda)\geq \frac1{4c_2^2}(\lambda_c-\lambda)^{-2}\P_{\lambda_c}[\Vol(\C) \ge y]. \]
\end{enumerate}
Inequality \eqref{e:s2} is thus established with $c=\min(\frac12,\frac1{4c_2^2})$. 

\subsection{Proof of Theorem \ref{t:t}}
We begin with the proof of \eqref{e:t1}. First we recall from Corollary~\ref{c:vc} that $\theta(\lambda) = \P_\lambda[\Vol(\C) =  \infty]$ for every $\lambda > 0$; hence we may assume $\theta(\lambda_c) = 0$ because otherwise \eqref{e:t1} holds trivially. It also suffices to prove the result for $y$ sufficiently large, by adjusting constants.
 
For $y > 0$ define $\lambda = \lambda(y) > \lambda_c$ such that
\[ \P_\lambda[\Vol(\mathcal{C})\geq y]=2\P_{\lambda_c}[\Vol(\mathcal{C})\geq y] , \]
which exists by the continuity of $\lambda \to \P_\lambda[\Vol(\mathcal{C})\geq y]$ (see Corollary \ref{c:vc}). We claim that $\lambda(y) \to \lambda_c$ as $y \to \infty$. Indeed if $\limsup_{y \to \infty} \lambda(y) = \lambda_0 > \lambda_c$, then along a subsequence $y_k \to \infty$,
\[2\P_{\lambda_c}[\Vol(\mathcal{C})\geq y_k] = \P_\lambda[\Vol(\mathcal{C})\geq y_k] \ge  \P_{\lambda_0}[\Vol(\C) = \infty] = \theta(\lambda_0) > 0, \]
whereas we assume that, as $y \to \infty$,
\[  \P_{\lambda_c}[\Vol(\mathcal{C})\geq y] \to \theta(\lambda_c) = 0 , \]
a contradiction. Hence, in particular, there exist $y_0, c_0, \beta_0 > 0$ such that $\theta(\lambda) \ge c_0 (\lambda-\lambda_c)^{\beta_0}$ for all $y \ge y_0$. 

Now fix $y \ge y_0$ and consider the algorithm $\mathcal{T}$ for the event $A = \{ \Vol(\C) \ge y \}$. Applying the first statement of Proposition~\ref{p:entropic} (with $\lambda_1= \lambda_c$ and $\lambda_2 = \lambda$) we have
\begin{align*}
 \P_{\lambda_c}[\Vol(\mathcal{C})\geq y] & = \P_\lambda[\Vol(\mathcal{C})\geq y]-\P_{\lambda_c}[\Vol(\mathcal{C})\geq y] \\
&\quad \leq (2/\lambda)^{1/2} (\lambda-\lambda_c)\sqrt{\P_{\lambda}[\Vol(\mathcal{C})\geq y]\E_{\lambda_c}[\PVol(\mathcal{W}_\mathcal{A})]}  \\
& \quad \le 2 \lambda_c^{-1/2} (\lambda-\lambda_c)\sqrt{\P_{\lambda_c}[\Vol(\mathcal{C})\geq y]\E_{\lambda_c}[\PVol(\mathcal{W}_\mathcal{A})]} 
\end{align*}
and hence
\[    \P_{\lambda_c}[\Vol(\mathcal{C})\geq y] \le (4/\lambda_c) (\lambda-\lambda_c)^2  \E_{\lambda_c}[\PVol(\mathcal{W}_\mathcal{A})].  \]
Combining with
\[ 2 \P_{\lambda_c}[\Vol(\mathcal{C})\geq y] = \P_\lambda[\Vol(\mathcal{C})\geq y] \ge \P_\lambda[\Vol(\C) = \infty] = \theta(\lambda) \ge c_0 (\lambda-\lambda_c)^{\beta_0} , \]
yields
\[ \P_{\lambda_c}[\Vol(\mathcal{C})\geq y]^{2/\beta_0-1}  \E_{\lambda_c}[\PVol(\mathcal{W}_\mathcal{A})]  \ge (\lambda_c/4) (2/c_0)^{2/\beta_0} . \]
The result then follows by the second statement of Proposition \ref{p:vc3}.

For \eqref{e:t2} we fix $\lambda \le \lambda_c$ and $y \ge 1$ and again consider the algorithm $\mathcal{T}$ for the event $A = \{ \Vol(\C) \ge y \}$. This time applying the second statement of Proposition \ref{p:entropic} (with $\lambda_1 = \lambda_c$ and $\lambda_2 = \lambda$) and combining with the second statement of Proposition \ref{p:vc3}, there exists a constant $c = c(\lambda_0) > 0$ such that, for all $\lambda \in [\lambda_0, \lambda_c]$,
\begin{align*}
 \log \P_{\lambda_c}[ \Vol(\C) \ge y] -   \log \P_{\lambda}[ \Vol(\C) \ge y]  & \le    \frac{ \lambda^{-1} (\lambda_c-\lambda)^2 \E_{\lambda_c} [ \PVol (\mathcal{W}_\mathcal{A}) ]  }{ \P_{\lambda_c}[ \Vol(\C) \ge y] }  + 1  \\
&    \le    \frac{  c \lambda^{-1} (\lambda_c-\lambda)^2   \int_0^y \P_{\lambda_c}[\Vol(\C) \ge u] du    }{ \P_{\lambda_c}[ \Vol(\C) \ge y] }  + 1 
\end{align*}
as required.

\subsection{Proof of Theorem \ref{t:m}}
We start with the proof of \eqref{e:m1}, which is similar to the proof of \eqref{e:s2}. By adjusting constants, it is enough to prove the result for $\lambda < \lambda_c$ sufficiently close to $\lambda_c$. Recall from \eqref{e:mag} that the magnetisation can be defined as $M(\rho) =  \P_{\lambda_c,\rho}[0\longleftrightarrow\mathcal{G}]$. Fix $\lambda < \lambda_c$ and consider the algorithm $\mathcal{T}$ for the event $A = \{ 0 \longleftrightarrow \mathcal{G}\}$. Applying the first statement of Proposition \ref{p:entropic2} (with $\lambda_1 = \lambda$, $\lambda_2 = \lambda_c$, and $\rho = (\lambda_c-\lambda)^2$), and then using the first statement of Proposition \ref{p:vc3}, gives that  
\begin{align*}
 M((\lambda_c-\lambda)^2)-\P_{\lambda,(\lambda_c-\lambda)^2}[0\longleftrightarrow\mathcal{G}]   &   \le \sqrt{2/\lambda_c} (\lambda_c-\lambda)\sqrt{M((\lambda_c-\lambda)^2) \E_\lambda[ \PVol( \mathcal{W}_\mathcal{A} ) ] } \\
 & \le c_1 (\lambda_c-\lambda)\sqrt{M((\lambda_c-\lambda)^2) \chi(\lambda)  }
 \end{align*}
 for some constant $c_1 > 0$ and $\lambda$ sufficiently close to $\lambda_c$. By Markov's inequality and using the independence of the ghost field $\mathcal{G}$,
\begin{align*}
\P_{\lambda,(\lambda_c-\lambda)^2}[0\longleftrightarrow\mathcal{G}] & =  \P_{\lambda,(\lambda_c-\lambda)^2}[ | \C\cap\mathcal{G}|  \ge 1 ]  \le \E_{\lambda,(\lambda_c-\lambda)^2}[ | \C\cap\mathcal{G}| ]   \\ 
& = (\lambda_c-\lambda)^2\E_\lambda [ \Vol (\C) ] =(\lambda_c-\lambda)^2\chi(\lambda). 
\end{align*}
Combining gives
\[ M((\lambda_c-\lambda)^2)- (\lambda_c-\lambda)^2\chi(\lambda) \le c_1 (\lambda_c-\lambda)\sqrt{M((\lambda_c-\lambda)^2)  \chi(\lambda) } \]
and applying the same disjunction as in the proof of \eqref{e:s2} (with $M((\lambda_c-\lambda)^2)$ replacing $\P_{\lambda_c}[\Vol(\C) \ge y]$) gives the result. 

Next, the proof of (\ref{e:m2}), which is similar to the proof of \eqref{e:t1}. First observe that we may assume $\lim_{\rho \to 0} M(\rho) = 0$, since otherwise the result is trivial. Also, by adjusting constants, it is enough to prove the result for sufficiently small $\rho$. Define $ \lambda(\rho) >\lambda_c$ such that
\[ \P_{\lambda(\rho),\rho}[0 \longleftrightarrow \mathcal{G}]=  2 \P_{\lambda_c,\rho}[0 \longleftrightarrow \mathcal{G}]   \]
which exists by the continuity of $\lambda \to \P_{\lambda,\rho}[0 \longleftrightarrow \mathcal{G}]$ (see Corollary \ref{c:vc}).  We claim that $\lambda(\rho) \to \lambda_c$ as $\rho \to 0$. Indeed if $\limsup_{\rho \to 0} \lambda(\rho) = \lambda_0 > \lambda_c$ then along a subsequence $\rho_k \to 0$
\begin{equation}
\label{e:m3} 2 M(\rho_k)   \ge  2   \P_{\lambda_c,\rho_k}[0 \longleftrightarrow \mathcal{G}]    = \P_{\lambda(\rho_k),\rho_k}[0 \longleftrightarrow \mathcal{G}]  .
\end{equation}
Recall from Corollary~\ref{c:vc} that $\theta(\lambda) = \P_{\lambda}[ \Vol(\C) = \infty]$, which by the independence of the ghost field $\mathcal{G}$ implies that 
\[   \P_{\lambda,\rho}[0\longleftrightarrow\mathcal{G}] \ge \theta(\lambda)   \]
for all $\lambda,\rho> 0$. Hence \eqref{e:m3} violates our assumption that $M(\rho) \to 0$. In particular, there exist $\rho_0, c_0, \beta_0 > 0$ such that $ \P_{\lambda(\rho),\rho}[0\longleftrightarrow\mathcal{G}]  \ge \theta(\lambda(\rho)) \ge c_0 (\lambda(\rho)-\lambda_c)^{\beta_0}$ for all $\rho \in (0,\rho_0)$.

Now fix $\rho \in (0,\rho_0)$ and consider the algorithm $\mathcal{T}$ for the event $A = \{ 0 \longleftrightarrow \mathcal{G}\}$. As in the proof of \eqref{e:t1}, applying the first statement of Proposition~\ref{p:entropic} (in its entropic form in Proposition \ref{p:entropic2}, with $\lambda_1= \lambda_c$ and $\lambda_2 = \lambda(\rho)$) we have
\begin{align*}
 \P_{\lambda_c, \rho}[0 \longleftrightarrow \mathcal{G}] & = \P_{\lambda(\rho),\rho}[0 \longleftrightarrow \mathcal{G}]-\P_{\lambda_c, \rho}[0 \longleftrightarrow \mathcal{G}] \\
 &\quad \leq (2/\lambda(\rho))^{1/2} (\lambda(\rho)-\lambda_c)\sqrt{\P_{\lambda(\rho), \rho}[0 \longleftrightarrow \mathcal{G} ]\E_{\lambda_c,\rho}[\PVol(\mathcal{W}_\mathcal{A})]}  \\
& \quad \le 2 \lambda_c^{-1/2} (\lambda(\rho)-\lambda_c)\sqrt{\P_{\lambda_c,\rho}[0 \longleftrightarrow \mathcal{G}]\E_{\lambda_c,\rho}[\PVol(\mathcal{W}_\mathcal{A})]} 
\end{align*}
and hence
\[    \P_{\lambda_c,\rho}[0 \longleftrightarrow \mathcal{G}] \le (4/\lambda_c) (\lambda(\rho)-\lambda_c)^2  \E_{\lambda_c,\rho}[\PVol(\mathcal{W}_\mathcal{A})].  \]
Combining with
\[ 2 \P_{\lambda_c,\rho}[0 \longleftrightarrow \mathcal{G}] = \P_{\lambda(\rho),\rho}[0 \longleftrightarrow \mathcal{G}] \ge \theta(\lambda(\rho)) \ge c_0 (\lambda(\rho)-\lambda_c)^{\beta_0}  \]
yields
\[ \P_{\lambda_c,\rho}[0 \longleftrightarrow \mathcal{G}]^{2/\beta_0-1}  \E_{\lambda_c}[\PVol(\mathcal{W}_\mathcal{A})]  \ge (\lambda_c/4) (2/c_0)^{2/\beta_0} . \]
Applying the third statement of Proposition~\ref{p:vc3}, we deduce the existence of a constant $c_2>0$ such that, for all $\rho$ sufficiently small,
\[   \P_{\lambda_c,\rho}[0 \longleftrightarrow \mathcal{G}]^{2/\beta_0-1}  \rho^{-1}   \E_{\lambda_c}\big[1 - e^{-\rho \Vol(\mathcal{C})} \big] \ge c_2. \]  
Recalling from \eqref{e:mag} that 
\[ M(\rho) =\E_{\lambda_c}\big[1 - e^{-\rho \Vol(\mathcal{C})} \big] =  \P_{\lambda_c,\rho}[0 \longleftrightarrow \mathcal{G}]   \] 
we conclude that  $M(\rho)^{2/\beta_0} \rho^{-1} \ge c_2$ as required.

\bigskip
\bibliographystyle{plain}
\bibliography{paper}

\end{document}